\pgfplotsset{compat=1.15}
\newtheorem{thrm}{Theorem}[section] 
\newtheorem{cor}[thrm]{Corollary}
\newtheorem{lem}[thrm]{Lemma}
\newtheorem{prop}[thrm]{Proposition}
\theoremstyle{definition}
\newtheorem{defn}[thrm]{Definition}
\newtheorem{exm}[thrm]{Example}
\newtheorem{rem}[thrm]{Remark}
\crefname{thrm}{Theorem}{Theorems}
\crefname{lem}{Lemma}{Lemmas}
\crefname{cor}{Corollary}{Corollaries}
\crefname{prop}{Proposition}{Propositions}
\crefname{defn}{Definition}{Definitions}
\crefname{exm}{Example}{Examples}
\crefname{rem}{Remark}{Remarks}
\crefname{section}{Section}{Sections}
\crefname{equation}{\unskip}{\unskip}
\crefname{enumi}{\unskip}{\unskip}
\DeclareMathOperator{\Ann}{Ann}
\DeclareMathOperator{\Aut}{Aut}
\DeclareMathOperator{\Max}{Max}
\DeclareMathOperator{\Min}{Min}
\DeclareMathOperator{\sgn}{sgn}
\renewcommand{\iff}{\Leftrightarrow}
\newcommand{\impl}{\Rightarrow}
\newcommand{\gen}[1]{\langle #1\rangle}
\newcommand{\dl}{\delta}
\newcommand{\Dl}{\Delta}
\newcommand{\G}{\Gamma}
\newcommand{\lb}{\lambda}
\newcommand{\sg}{\sigma}
\newcommand{\vf}{\varphi}
\newcommand{\kp}{\kappa}
\newcommand{\LL}{\mathfrak{L}}
\newcommand{\sst}{\subseteq}
\newcommand{\tl}{\tilde}
\newcommand{\ad}{\mathrm{ad}}
\newcommand{\ch}{\mathrm{char}}
\newcommand{\m}{{}^{-1}}
\begin{document}

	\noindent{\Large  
		Transposed Poisson structures on Lie incidence algebras}\footnote{
		The work was supported by  
		FCT   UIDB/MAT/00212/2020, UIDP/MAT/00212/2020, 2022.02474.PTDC and by CMUP, member of LASI, which is financed by national funds through FCT --- Fundação para a Ciência e a Tecnologia, I.P., under the project with reference UIDB/00144/2020.} 
	\footnote{Corresponding author: kaygorodov.ivan@gmail.com}
	
	\bigskip

	\begin{center}
	    
	{\bf
		Ivan Kaygorodov\footnote{CMA-UBI, Universidade da Beira Interior, Covilh\~{a}, Portugal; \    kaygorodov.ivan@gmail.com}	\&
		Mykola Khrypchenko\footnote{ Departamento de Matem\'atica, Universidade Federal de Santa Catarina,     Brazil; \and CMUP, Departamento de Matemática, Faculdade de Ciências, Universidade do Porto,
			Rua do Campo Alegre s/n, 4169--007 Porto, Portugal\ nskhripchenko@gmail.com}
	}
	\

	\end{center}

	\bigskip
	
	\ 
	
	\noindent {\bf Abstract:} {\it 	
		Let $X$ be a finite connected poset, $K$ a field of characteristic zero and $I(X,K)$ the incidence algebra of $X$ over $K$ seen as a Lie algebra under the commutator product. In the first part of the paper we show that any $\frac 12$-derivation of $I(X,K)$ decomposes into the sum of a central-valued $\frac 12$-derivation, an inner $\frac 12$-derivation and a $\frac 12$-derivation associated with a map $\sg:X^2_<\to K$ that is constant on chains and cycles in $X$. In the second part of the paper we use this result to prove that any transposed Poisson structure on $I(X,K)$ is the sum of a structure of Poisson type, a mutational structure and a structure determined by $\lb:X^2_e\to K$, where $X^2_e$ is the set of $(x,y)\in X^2$ such that $x<y$ is a maximal chain not contained in a cycle.

	}

	\
	
	\noindent {\bf Keywords}: 
	{\it 	Transposed Poisson algebra, Lie incidence algebra, $\delta$-derivation.
	}

	\noindent {\bf MSC2020}: primary 17A30, 15B30; secondary 17B40, 17B63, 16W25, 06A11.  
	
	\tableofcontents

	\section*{Introduction}
	
	Since their origin in the 1970s in Poisson geometry, Poisson algebras have appeared in several areas of mathematics and physics, such as algebraic geometry, operads, quantization theory, quantum groups, and classical and quantum mechanics. One of the natural tasks in the theory of Poisson algebras is the description of all such algebras with fixed Lie or associative part~\cite{YYZ07,jawo,kk21}.

	Recently, Bai, Bai, Guo, and Wu~\cite{bai20} have introduced a dual notion of the Poisson algebra, called a \textit{transposed Poisson algebra}, by exchanging the roles of the two multiplications in the Leibniz rule defining a Poisson algebra. A transposed Poisson algebra defined this way not only shares some properties of a Poisson algebra, such as the closedness under tensor products and the Koszul self-duality as an operad, but also admits a rich class of identities \cite{kms,bai20,fer23,bfk22}. It is important to note that a transposed Poisson algebra naturally arises from a Novikov-Poisson algebra by taking the commutator Lie algebra of its Novikov part \cite{bai20}.
 	Any unital transposed Poisson algebra is
	a particular case of a ``contact bracket'' algebra 
	and a quasi-Poisson algebra \cite{bfk22}.
 Each transposed Poisson algebra is a 
 commutative  Gelfand-Dorfman algebra \cite{kms}
 and it is also an algebra of Jordan brackets \cite{fer23}.

	In a recent paper by Ferreira, Kaygorodov, and  Lopatkin
	a relation between $\frac{1}{2}$-derivations of Lie algebras and 
	transposed Poisson algebras has been established \cite{FKL}. 	These ideas were used to describe all transposed Poisson structures 
	on  Witt and Virasoro algebras in  \cite{FKL};
	on   twisted Heisenberg-Virasoro,   Schr\"odinger-Virasoro  and  
	extended Schr\"odinger-Virasoro algebras in \cite{yh21};
	on Schr\"odinger algebra in $(n+1)$-dimensional space-time in \cite{ytk};
	on Witt type Lie algebras in \cite{kk23};
	on generalized Witt algebras in \cite{kkg23}; 
 Block Lie algebras in \cite{kk22,kkg23}
 and
 on the Lie algebra of upper triangular matrices in \cite{KK7}.
		Any complex finite-dimensional solvable Lie algebra was proved to admit a non-trivial transposed Poisson structure \cite{klv22}.
	The algebraic and geometric classification of $3$-dimensional transposed Poisson algebras was given in \cite{bfk23}.	
	For the list of actual open questions on transposed Poisson algebras, see \cite{bfk22}.	

	In this paper, we give a full characterization of transposed Poisson structures on the incidence Lie algebra $(I(X,K),[\cdot,\cdot])$ of a finite connected poset $X$ over a field $K$ of characteristic zero. As usual, our result is based on the description of $\frac 12$-derivations of $(I(X,K),[\cdot,\cdot])$, which is itself interesting. Apart from the \textit{central-valued} and \textit{inner} $\frac 12$-derivations that can be defined on an arbitrary Lie algebra (although, they are not always non-trivial), there is also a class of $\frac 12$-derivations depending on the combinatorial structure of $X$. Such $\frac 12$-derivations are determined by maps $\sg:X^2_<\to K$ that are constant on chains and cycles in $X$. It is proved in \cref{descr-half-der-I(X_K)} that any $\frac 12$-derivation of $(I(X,K),[\cdot,\cdot])$ is uniquely represented as the sum of $\frac 12$-derivations of the $3$ above mentioned types. This is the main result of the first part of the paper.
	
	Each of the $3$ classes of $\frac 12$-derivations of $I(X,K)$ corresponds to a class of transposed Poisson structures on $I(X,K)$. Central-valued $\frac 12$-derivations correspond to the structures of \textit{Poisson type}. Inner $\frac 12$-derivations correspond to the \textit{mutational} structures. And the $\frac 12$-derivations associated with $\sg:X^2_<\to K$ give rise to the so-called \textit{$\lb$-structures} determined by maps $\lb:X^2_e\to K$, where $X^2_e$ is the set of $(x,y)\in X^2$ such that $x<y$ is a maximal chain not contained in a cycle. The sum of such structures is again a transposed Poisson structure on $I(X,K)$, and any transposed Poisson structure on $I(X,K)$ can be decomposed into the sum. This is proved in \cref{descr-TP-on-T(X_K)}, which is the main result of the second part of the paper. We finish our article with a series of examples calculating transposed Poisson structures on the incidence algebras of concrete finite connected posets.

	\section{Definitions and preliminaries}\label{prelim}
	
	\subsection{Transposed Poisson algebras}
	All the algebras below will be over a field $K$ of characteristic zero and all the linear maps will be $K$-linear, unless otherwise stated. The notation $\gen{S}$ means the $K$-subspace generated by $S$.

	\begin{defn}[Bai, Bai, Guo, and Wu \cite{bai20}]\label{tpa}
		Let ${\mathfrak L}$ be a vector space equipped with two bilinear operations $\cdot$ and $[\cdot,\cdot].$
		The triple $({\mathfrak L},\cdot,[\cdot,\cdot])$ is called a \textit{transposed Poisson algebra} if $({\mathfrak L},\cdot)$ is a commutative associative algebra and
		$({\mathfrak L},[\cdot,\cdot])$ is a Lie algebra satisfying
		\begin{align}\label{Trans-Leibniz}
			2z\cdot [x,y]=[z\cdot x,y]+[x,z\cdot y].
		\end{align}
	\end{defn}
	
	\begin{defn}\label{tp-structures}
		Let $({\mathfrak L},[\cdot,\cdot])$ be a Lie algebra. A \textit{transposed Poisson (algebra) structure} on $({\mathfrak L},[\cdot,\cdot])$ is a commutative associative operation $\cdot$ on $\mathfrak L$ which makes $({\mathfrak L},\cdot,[\cdot,\cdot])$ a transposed Poisson algebra.
	\end{defn}
	
	A transposed Poisson structure $\cdot$ on $\LL$ is called \textit{trivial}, if $x\cdot y=0$ for all $x,y\in\LL$.
	
	\begin{defn}\label{12der}
		Let $({\mathfrak L}, [\cdot,\cdot])$ be an algebra and $\varphi:\mathfrak L\to\mathfrak L$ a linear map.
		Then $\varphi$ is a \textit{$\frac{1}{2}$-derivation} if it satisfies
		\begin{align}\label{vf(xy)=half(vf(x)y+xvf(y))}
			\varphi \big([x,y]\big)= \frac{1}{2} \big([\varphi(x),y]+ [x, \varphi(y)] \big).
		\end{align}
	\end{defn}
	Observe that $\frac{1}{2}$-derivations are a particular case of $\delta$-derivations introduced by Filippov in \cite{fil1}. The space of all $\frac{1}{2}$-derivations of an algebra $\mathfrak L$ will be denoted by $\Dl(\mathfrak L).$ It is easy to see from \cref{vf(xy)=half(vf(x)y+xvf(y))} that $[\LL,\LL]$ and $\Ann(\LL)$ are invariant under any $\frac 12$-derivation of $\LL$.

	\cref{tpa,12der} immediately imply the following key Lemma.
	\begin{lem}\label{glavlem}
		Let $({\mathfrak L},[\cdot,\cdot])$ be a Lie algebra and $\cdot$ a bilinear operation on ${\mathfrak L}$. Then $({\mathfrak L},\cdot,[\cdot,\cdot])$ is a transposed Poisson algebra 
		if and only if $\cdot$ is commutative and associative and for every $z\in{\mathfrak L}$ the multiplication by $z$ in $({\mathfrak L},\cdot)$ is a $\frac{1}{2}$-derivation of $({\mathfrak L}, [\cdot,\cdot]).$
	\end{lem}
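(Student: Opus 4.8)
The plan is to simply unwind \cref{tpa,tp-structures,12der}. By \cref{tpa}, the triple $(\mathfrak L,\cdot,[\cdot,\cdot])$ is a transposed Poisson algebra precisely when three things hold: (a) $(\mathfrak L,\cdot)$ is commutative and associative; (b) $(\mathfrak L,[\cdot,\cdot])$ is a Lie algebra; and (c) the identity \eqref{Trans-Leibniz} is satisfied for all $x,y,z\in\mathfrak L$. Since $(\mathfrak L,[\cdot,\cdot])$ is assumed to be a Lie algebra, condition (b) is automatic, and the whole statement reduces to verifying that, in the presence of (a), condition (c) is equivalent to the assertion that for every $z$ the operator ``multiplication by $z$'' is a $\frac12$-derivation of $(\mathfrak L,[\cdot,\cdot])$.

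Concretely, for $z\in\mathfrak L$ I would write $L_z\colon\mathfrak L\to\mathfrak L$, $L_z(w)=z\cdot w$; this map is linear because $\cdot$ is bilinear, so it is a legitimate candidate for \cref{12der}. Rewriting \eqref{Trans-Leibniz} in terms of $L_z$ and dividing by $2$ gives
\[
L_z\big([x,y]\big)=\frac12\big([L_z(x),y]+[x,L_z(y)]\big),
\]
which is exactly \eqref{vf(xy)=half(vf(x)y+xvf(y))} with $\varphi=L_z$. Hence, for a fixed $z$, \eqref{Trans-Leibniz} holds for all $x,y$ if and only if $L_z\in\Dl(\mathfrak L)$; and \eqref{Trans-Leibniz} holds for all $x,y,z$ if and only if $L_z\in\Dl(\mathfrak L)$ for every $z\in\mathfrak L$. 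Combining this equivalence with condition (a) yields both implications of the lemma at once.

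I do not expect any real obstacle here: the lemma is a direct reformulation of the definitions, and the only point worth flagging is that the coefficient $\frac12$ appearing in \eqref{vf(xy)=half(vf(x)y+xvf(y))} is precisely what matches the factor $2$ on the left-hand side of \eqref{Trans-Leibniz}, which is why $\frac12$-derivations (rather than ordinary derivations or general $\delta$-derivations) are the relevant notion. The commutativity of $\cdot$ plays no part in the equivalence of (c) with the $\frac12$-derivation condition; it is merely transported along as part of (a).
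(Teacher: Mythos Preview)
Your proposal is correct and matches the paper's approach: the paper simply states that \cref{tpa,12der} immediately imply the lemma, and your argument is precisely the unwinding of those definitions that makes this implication explicit.
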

	
	The basic example of a $\frac{1}{2}$-derivation is the multiplication by a field element.
	Such $\frac{1}{2}$-derivations will be called \textit{trivial}. 
	
	\begin{thrm}\label{princth}
		Let $\LL$ be a Lie algebra with $\dim(\LL)>1$. If all the $\frac{1}{2}$-derivations of $\LL$ are trivial, then all the transposed Poisson algebra structures on $\LL$ are trivial.
	\end{thrm}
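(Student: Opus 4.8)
The plan is to combine \cref{glavlem} with the triviality hypothesis to show that every left multiplication operator of the product $\cdot$ is a scalar multiple of the identity, and then to use commutativity of $\cdot$ together with $\dim(\LL)>1$ to force all these scalars to vanish.

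First I would fix an arbitrary transposed Poisson structure $\cdot$ on $\LL$. By \cref{glavlem}, for each $z\in\LL$ the operator $L_z\colon x\mapsto z\cdot x$ is a $\frac12$-derivation of $(\LL,[\cdot,\cdot])$. By hypothesis every $\frac12$-derivation of $\LL$ is trivial, i.e. is multiplication by a field element, so there is a scalar $\vf(z)\in K$ with $z\cdot x=\vf(z)\,x$ for all $x\in\LL$; since $\LL\neq 0$ this scalar is uniquely determined by $z$.

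Next I would exploit commutativity of $\cdot$: for all $x,z\in\LL$ we have $\vf(z)\,x=z\cdot x=x\cdot z=\vf(x)\,z$, hence $\vf(z)\,x-\vf(x)\,z=0$. Because $\dim(\LL)>1$, given any $z\in\LL$ we may choose $x\in\LL$ not proportional to $z$; then $x$ and $z$ are linearly independent, and the relation $\vf(z)\,x-\vf(x)\,z=0$ forces $\vf(z)=0$. Thus $\vf\equiv 0$, so $z\cdot x=0$ for all $x,z\in\LL$, i.e. the structure $\cdot$ is trivial.

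There is no serious obstacle here; the only point worth recording is that associativity of $\cdot$ is not used at all --- commutativity alone, together with the dimension hypothesis, already collapses the family of scalars $\vf(z)$ to zero. One should also note that the assumption $\dim(\LL)>1$ is essential: on a one-dimensional Lie algebra $\gen{e}$ (necessarily with zero bracket) the rule $e\cdot e=e$ defines a non-trivial transposed Poisson structure, even though every $\frac12$-derivation is trivial there.
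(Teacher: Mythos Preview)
Your argument is correct. The paper actually states \cref{princth} without proof, treating it as a known fact imported from the literature (it originates in \cite{FKL}), so there is no ``paper's own proof'' to compare against. What you wrote is precisely the standard justification: use \cref{glavlem} to turn each left multiplication $L_z$ into a $\frac12$-derivation, invoke the hypothesis to get $L_z=\vf(z)\,\id$, and then use commutativity plus $\dim(\LL)>1$ to kill all the scalars.

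One cosmetic point: the phrase ``choose $x$ not proportional to $z$; then $x$ and $z$ are linearly independent'' tacitly assumes $z\ne 0$. For $z=0$ the conclusion $\vf(0)=0$ is immediate from bilinearity (or from $\vf(0)\,x=0$ with $x\ne 0$), so this is harmless, but you may want to say a word to cover that case explicitly. Your closing observations --- that associativity is not needed and that the bound $\dim(\LL)>1$ is sharp --- are both accurate.
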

	
	Another well-known class of $\frac{1}{2}$-derivations of $(\LL,[\cdot,\cdot])$ is formed by linear maps $\LL\to \Ann(\LL)$ annihilating $[\LL,\LL]$. If $(\LL,[\cdot,\cdot])$ is a Lie algebra, such $\frac{1}{2}$-derivations of $\LL$ (we call them \textit{central-valued}) correspond to the following transposed Poisson structures on $\LL$. Denote by $Z(\LL)$ the \textit{center} of $\LL$ and fix a complement $V$ of $[\LL,\LL]$ in $\LL$. Then any commutative operation $*:V\times V\to Z(\LL)$ extends to a commutative operation $\cdot:\LL\times\LL\to\LL$ by means of
	\begin{align}\label{(a_1+a_2)-times-(b_1+b_2)}
		(a_1+a_2)\cdot(b_1+b_2)=a_1*b_1,
	\end{align}
	where $a_1,b_1\in V$ and $a_2,b_2\in [\LL,\LL]$. The resulting product $\cdot$ satisfies \cref{Trans-Leibniz}. Indeed, the right-hand side of \cref{Trans-Leibniz} is zero, because $z\cdot x,z\cdot y\in Z(\LL)$, while the left-hand side of \cref{Trans-Leibniz} is zero by \cref{(a_1+a_2)-times-(b_1+b_2)}, because $[x,y]\in[\LL,\LL]$. As to the associativity of $\cdot$, consider the following two situations that often happen in practice. If $Z(\LL)\cap[\LL,\LL]=\{0\}$ and we choose $V\supseteq Z(\LL)$, then $\cdot$ is associative $\iff *$ is associative. If $Z(\LL)\sst [\LL,\LL]$, then $(a\cdot b)\cdot c=a\cdot(b\cdot c)=0$, so $\cdot$ is associative for any $*$. Observe that, whenever it is associative, $\cdot$ is at the same time a usual Poisson structure on $(\LL,[\cdot,\cdot])$. Thus, such transposed Poisson structures are said to be \textit{of Poisson type}.
	
	One more class of $\frac 12$-derivations that can be defined on any Lie algebra $(\LL,[\cdot,\cdot])$ is as follows. Fix $c\in Z([\LL,\LL])$ and let 
	\begin{align}\label{af_c(a)=[c_a]}
		\ad_c(a)=[c,a]
	\end{align}
for all $a\in\LL$. Then $[\ad_c(a),b]+[a,\ad_c(b)]=\ad_c([a,b])=0$, because $c\in Z([\LL,\LL])$. The $\frac 12$-derivation $\ad_c$ will be called \textit{inner}. It corresponds to the transposed Poisson structure given by the following \textit{mutation} of the product $[\cdot,\cdot]$:
\begin{align*}
	a\cdot_c b=[[a,c],b]
\end{align*}
for all $a,b\in\LL$. Indeed, $\cdot_c$ is commutative due to the Jacobi identity and $c\in Z([\LL,\LL])$. It is associative, because $(x\cdot_c y)\cdot_c z=x\cdot_c (y\cdot_c z)=0$. Since $Z([\LL,\LL])$ is an ideal of $\LL$ (as $\ad_{[a,c]}=[\ad_a,\ad_c]$), then the $\cdot_c$-multiplication by $a$, being $\ad_{[a,c]}$, is an inner $\frac 12$-derivation of $\LL$. We call $\cdot_c$ a \textit{mutational} structure on $\LL$.

Given two binary operations $\cdot_1$ and $\cdot_2$ on a vector space $V$, their \textit{sum} $*$ is defined by 
\begin{align}\label{a*b=a-cdot_1-b+a-cdot_2-b}
	a*b=a\cdot_1 b+a\cdot_2 b.
\end{align}
We say that $\cdot_1$ and $\cdot_2$ are \textit{orthogonal}, if
\begin{align*}
	V\cdot_1V\sst\Ann(V,\cdot_2)\text{ and }V\cdot_2V\sst\Ann(V,\cdot_1).
\end{align*}
In this case $*$ given by \cref{a*b=a-cdot_1-b+a-cdot_2-b} is called the \textit{orthogonal} sum of $\cdot_1$ and $\cdot_2$. 

Clearly, the sum $*$ of two transposed Poisson structures $\cdot_1$ and $\cdot_2$ on $(\LL,[\cdot,\cdot])$ is commutative and satisfies \cref{Trans-Leibniz}. If $\cdot_1$ and $\cdot_2$ are orthogonal, then $*$ is also associative, so we get the following.
\begin{prop}
	The orthogonal sum of two transposed Poisson structures on a Lie algebra $\LL$ is a transposed Poisson structure on $\LL$.
\end{prop}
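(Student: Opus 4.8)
The plan is to verify the three axioms of \cref{tpa} for the operation $*$ defined by \cref{a*b=a-cdot_1-b+a-cdot_2-b}. Bilinearity is built into the definition, and commutativity of $*$ is immediate since both $\cdot_1$ and $\cdot_2$ are commutative. As already noted in the paragraph preceding the statement, $*$ inherits \cref{Trans-Leibniz} from $\cdot_1$ and $\cdot_2$: writing out the identity for each of the two structures and adding the results, the right-hand sides combine as $[z\cdot_1 x,y]+[z\cdot_2 x,y]+[x,z\cdot_1 y]+[x,z\cdot_2 y]=[z*x,y]+[x,z*y]$, while the left-hand sides sum to $2z*[x,y]$. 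So the only point that genuinely requires the orthogonality hypothesis is the associativity of $*$.

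To prove associativity I would expand $(a*b)*c=(a\cdot_1 b+a\cdot_2 b)*(c)$ into the four summands $(a\cdot_1 b)\cdot_1 c$, $(a\cdot_1 b)\cdot_2 c$, $(a\cdot_2 b)\cdot_1 c$, $(a\cdot_2 b)\cdot_2 c$. Since $a\cdot_1 b\in\LL\cdot_1\LL\sst\Ann(\LL,\cdot_2)$, the second term vanishes; since $a\cdot_2 b\in\LL\cdot_2\LL\sst\Ann(\LL,\cdot_1)$, the third term vanishes. Expanding $a*(b*c)$ in the same way and using that $b\cdot_1 c\in\Ann(\LL,\cdot_2)$ and $b\cdot_2 c\in\Ann(\LL,\cdot_1)$ — here one uses the commutativity of $\cdot_1$ and $\cdot_2$ to move the annihilated factor to the appropriate side — leaves only $a\cdot_1(b\cdot_1 c)+a\cdot_2(b\cdot_2 c)$. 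Thus $(a*b)*c=(a\cdot_1 b)\cdot_1 c+(a\cdot_2 b)\cdot_2 c$ and $a*(b*c)=a\cdot_1(b\cdot_1 c)+a\cdot_2(b\cdot_2 c)$, and these agree because each $\cdot_i$ is associative on its own. Hence $(\LL,*,[\cdot,\cdot])$ is a transposed Poisson algebra, i.e.\ $*$ is a transposed Poisson structure on $\LL$.

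There is no real obstacle: this is a bookkeeping lemma, and its only purpose is to license the decomposition theorems later (\cref{descr-half-der-I(X_K)} and \cref{descr-TP-on-T(X_K)}), where the three classes of structures on $I(X,K)$ will be shown to be pairwise orthogonal so that an arbitrary transposed Poisson structure can be written as their orthogonal sum. The single subtlety worth flagging in the write-up is the use of commutativity of the $\cdot_i$ when invoking the annihilator condition on the left-hand factor rather than the right.
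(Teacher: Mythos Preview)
Your proof is correct and is exactly the argument the paper has in mind; the paper simply asserts in one sentence that commutativity and \cref{Trans-Leibniz} are clear and that orthogonality yields associativity, and you have written out the four-term expansion that justifies the latter. One small correction to your closing commentary: the three classes of structures on $I(X,K)$ are \emph{not} pairwise orthogonal --- the mutational and $\lb$-structures are not orthogonal to each other, which is precisely why the paper needs a separate lemma (\cref{lb+nu-structure-is-TP}) to check associativity of their sum by hand.
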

Observe that any mutational transposed Poisson structure on a Lie algebra $\LL$ is orthogonal to any transposed Poisson structure of Poisson type on $\LL$. 
\begin{cor}\label{mutational+Poisson-TP}
	The (orthogonal) sum of a mutational transposed Poisson structure on $\LL$ and a transposed Poisson structure of Poisson type on $\LL$ is a transposed Poisson structure on $\LL$.
\end{cor}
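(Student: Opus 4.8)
The plan is to reduce everything to the preceding Proposition: it already guarantees that the orthogonal sum of two transposed Poisson structures is again a transposed Poisson structure, and both a mutational structure $\cdot_c$ and a transposed Poisson structure $\cdot$ of Poisson type are in particular transposed Poisson structures on $\LL$; so the only thing left to verify is the remark made just before the statement, namely that $\cdot_c$ and $\cdot$ are orthogonal, i.e. $\LL\cdot_c\LL\sst\Ann(\LL,\cdot)$ and $\LL\cdot\LL\sst\Ann(\LL,\cdot_c)$.

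First I would record the two inclusions that come for free from the definitions. On the one hand, a mutational structure attached to some $c\in Z([\LL,\LL])$ satisfies $\LL\cdot_c\LL=[[\LL,c],\LL]\sst[\LL,\LL]$ by \cref{af_c(a)=[c_a]}. On the other hand, a Poisson-type structure built via \cref{(a_1+a_2)-times-(b_1+b_2)} from a complement $V$ of $[\LL,\LL]$ and a commutative map $*:V\times V\to Z(\LL)$ has image in $Z(\LL)$, i.e. $\LL\cdot\LL\sst Z(\LL)$, and it kills the $[\LL,\LL]$-component of each argument, whence $[\LL,\LL]\sst\Ann(\LL,\cdot)$.

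Combining the first and the last of these inclusions gives $\LL\cdot_c\LL\sst[\LL,\LL]\sst\Ann(\LL,\cdot)$, which is one half of orthogonality. For the other half I would check that $Z(\LL)\sst\Ann(\LL,\cdot_c)$: if $a\in Z(\LL)$ then $[a,c]=0$, so $a\cdot_c b=[[a,c],b]=0$ for every $b\in\LL$, and since $\cdot_c$ is commutative this says exactly $a\in\Ann(\LL,\cdot_c)$. Together with $\LL\cdot\LL\sst Z(\LL)$ this yields $\LL\cdot\LL\sst\Ann(\LL,\cdot_c)$. Hence $\cdot_c$ and $\cdot$ are orthogonal, and the Proposition shows that their sum $*$, defined by \cref{a*b=a-cdot_1-b+a-cdot_2-b}, is commutative, associative and satisfies \cref{Trans-Leibniz}, i.e. is a transposed Poisson structure on $\LL$.

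There is essentially no obstacle in this argument; it is a short bookkeeping of two one-line inclusions. The only point worth keeping in mind is that the statement takes as input a \emph{genuine} transposed Poisson structure of Poisson type, so the associativity of $\cdot$ — which in general requires one of the side conditions on $Z(\LL)$ and $[\LL,\LL]$ discussed before the statement — is part of the hypothesis and need not be re-derived here.
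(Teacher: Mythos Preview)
Your proof is correct and follows exactly the approach of the paper: the corollary is an immediate consequence of the preceding Proposition together with the observation (stated just before the corollary) that any mutational structure is orthogonal to any structure of Poisson type. The paper leaves that orthogonality as a bare ``observe'', while you spell out both inclusions $\LL\cdot_c\LL\sst[\LL,\LL]\sst\Ann(\LL,\cdot)$ and $\LL\cdot\LL\sst Z(\LL)\sst\Ann(\LL,\cdot_c)$ explicitly; the details are all correct.
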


	
	Let $\cdot$ be a transposed Poisson algebra structure on a Lie algebra $({\mathfrak L}, [\cdot,\cdot])$. 
	Then any automorphism $\phi$ of $({\mathfrak L}, [\cdot,\cdot])$ induces the transposed Poisson algebra structure $*$ on $({\mathfrak L}, [\cdot,\cdot])$ given by
	\begin{align*}
		x*y=\phi\big(\phi^{-1}(x)\cdot\phi^{-1}(y)\big),\ \ x,y\in{\mathfrak L}.
	\end{align*}
	Clearly, $\phi$ is an isomorphism of transposed Poisson algebras $({\mathfrak L},\cdot,[\cdot,\cdot])$ and $({\mathfrak L},*,[\cdot,\cdot])$.

	\subsection{Posets and incidence algebras}
	
	Let $(X,\le)$ be a finite poset. We say that $x,y\in X$ are \textit{comparable} if either $x\le y$ or $y\le x$. A \textit{chain} in $X$ is a subset $C\sst X$ such that any two $x,y\in C$ are comparable. A chain $C$ is \textit{maximal}, if it is not contained in any chain different from $C$. The \textit{length} of a chain $C\sst X$ is defined to be $l(C):=|C|-1$. The \textit{length} of $X$ is $l(X):=\max\{l(C)\mid C\text{ is a chain in }X\}$. For all $x\le y$ we write $l(x,y):=l(\{z\in X \mid x\leq z\leq y\})$. A {\it walk} in $X$ from $x$ to $y$ is an ordered sequence $\G:x=x_0,x_1,\dots,x_m=y$ of elements of $X$, such that $x_i$ is comparable with $x_{i+1}$ and $l(x_i,x_{i+1})=1$ (if $x_i\le x_{i+1}$) or $l(x_{i+1},x_i)=1$ (if $x_{i+1}\le x_i$) for all $i=0,\dots,m-1$. Each $x_i$, $0\le i\le m$, is a called a \textit{vertex} of $\G$, and each ordered pair $(x_i,x_{i+1})$, $0\le i\le m-1$, is called an \textit{edge} of $\G$. The walk $\G$ is said to be {\it closed} if $x_0=x_m$. A {\it path} is a walk with $x_i\ne x_j$ for $i\ne j$. A {\it cycle} is a closed walk $x_0,x_1,\dots,x_m=x_0$ such that $m\ge 4$ and $x_i=x_j\impl \{i,j\}=\{0,m\}$  for $i\ne j$. A poset $X$ is {\it connected} if for all $x,y\in X$ there is a path from $x$ to $y$. 
	
	We will denote by $\Min(X)$ (resp.~$\Max(X)$) the set of minimal (resp.~maximal) elements of $X$ and by $X^2_<$ the set of pairs $(x,y)\in X^2$ such that $x<y$.
	
	Let $X$ be a finite poset and $K$ a field. The \textit{incidence algebra} $I(X,K)$ of $X$ over $K$ (see~\cite{Rota64}) is the associative $K$-algebra with basis $\{e_{xy} \mid x\le y\}$
	and multiplication given by
	$$
	e_{xy}e_{uv}=
	\begin{cases}
		e_{xv}, & y=u,\\
		0, & y\ne u,
	\end{cases}
	$$
	for all $x\leq y$ and $u\leq v$ in $X$. Given $f\in I(X,K)$, we write $f=\sum_{x\le y}f(x,y)e_{xy}$, where $f(x,y)\in K$. Let us denote $e_x := e_{xx}$, and for arbitrary $Y\sst X$ put $e_Y:=\sum_{y\in Y}e_y$. Then $e_Y$ is an idempotent and $e_Ye_Z=e_{Y\cap Z}$, in particular, $e_xe_y=0$ for $x\ne y$. Notice that $\dl:=e_X$ is the identity element of $I(X,K)$. More generally,
	\begin{align*}
		e_Yfe_Z=\sum_{Y\ni y\le z\in Z}f(y,z)e_{yz}.
	\end{align*}
	
	As usual, we denote by $[f,g]=fg-gf$ the commutator product on $I(X,K)$, so $(I(X,K),[\cdot,\cdot])$ is a Lie algebra. If $X$ is connected, then one can easily prove that 
	\begin{align*}
		Z(I(X,K))=\gen{\dl}\text{ and }[I(X,K),I(X,K)]=\gen{e_{xy}\mid x<y}     
	\end{align*}
(for instance, see \cite[Corollary 1.3.15]{SpDo} and \cite[Proposition 2.3]{FKS}). Moreover, by \cite[Proposition 2.5]{FKS}
	\begin{align*}
		Z([I(X,K),I(X,K)])=\gen{e_{xy}\mid \Min(X)\ni x<y\in\Max(X)}.
	\end{align*}
		\textit{Diagonal elements} of $I(X,K)$ are $f\in I(X,K)$ with $f(x,y)=0$ for $x\neq y$. They form a commutative subalgebra $D(X,K)$ of $I(X,K)$ with basis $\{e_x \mid x \in X\}$. As a vector space, 
		\begin{align*}
			I(X,K)=D(X,K)\oplus [I(X,K),I(X,K)].
		\end{align*}
	Thus, each $f\in I(X,K)$ is uniquely written as $f=f_D+f_J$ with $f_D\in D(X,K)$ and $f_J\in [I(X,K),I(X,K)]$. Observe that $Z(I(X,K))\sst D(X,K)$.
	
	\section{$\frac 12$-derivations of the incidence algebra}\label{sec-TP-on-T_n}
	
	Throughout the paper $X$ will always stand for a connected finite poset and $K$ for a field of characteristic zero. We denote by $\Dl(I(X,K))$ the space of $\frac 12$-derivations of the Lie algebra $(I(X,K),[\cdot,\cdot])$.
	
	\subsection{The decomposition of a $\frac{1}{2}$-derivation of the incidence algebra}

	Given $Y\sst X$, the algebra $I(Y,K)$ will be naturally seen as a subalgebra of $I(X,K)$. Denote by $I(Y,K)^c$ the subspace of $I(X,K)$ consisting of $f\in I(X,K)$ such that $f(x,y)=0$, whenever $x,y\in Y$. Then
	\begin{align*}
		I(X,K)=I(Y,K)\oplus I(Y,K)^c
	\end{align*}
	as $K$-spaces. Namely, each $f\in I(X,K)$ uniquely decomposes as
	\begin{align*}
		f=f_Y+f_Y^c,
	\end{align*}
where
	\begin{align*}
		f_Y(x,y)=
		\begin{cases}
			f(x,y), & x,y\in Y,\\
			0, & \text{otherwise},
		\end{cases}
	\end{align*}
	(and $f_Y^c=f-f_Y$).
	\begin{lem}
		We have $I(Y,K)\cdot I(Y,K)^c+I(Y,K)^c\cdot I(Y,K)\sst I(Y,K)^c$.
	\end{lem}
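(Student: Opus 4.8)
The plan is to reduce everything to the idempotent $e_Y$ and the associated ``corner'' projection $f\mapsto e_Yfe_Y$. First I would record the identity $I(Y,K)=e_YI(X,K)e_Y$, which is immediate from the formula $e_Yfe_Z=\sum_{Y\ni y\le z\in Z}f(y,z)e_{yz}$ specialized to $Z=Y$; the same formula shows that the decomposition $f=f_Y+f_Y^c$ introduced just before the statement is nothing but $f_Y=e_Yfe_Y$ and $f_Y^c=f-e_Yfe_Y$. Consequently
\[
I(Y,K)^c=\{f\in I(X,K)\mid e_Yfe_Y=0\},
\]
that is, $I(Y,K)^c$ is precisely the kernel of the linear map $f\mapsto e_Yfe_Y$.

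With this in hand the inclusion follows in one line from associativity and $e_Y^2=e_Y$. Take $f\in I(Y,K)$, so that $f=e_Yfe_Y$, and $g\in I(Y,K)^c$, so that $e_Yge_Y=0$. Then
\[
e_Y(fg)e_Y=e_Y(e_Yfe_Y)ge_Y=e_Yf(e_Yge_Y)=e_Yf\cdot 0=0,
\]
so $fg\in I(Y,K)^c$; symmetrically $e_Y(gf)e_Y=e_Yg(e_Yfe_Y)e_Y=(e_Yge_Y)fe_Y=0$, so $gf\in I(Y,K)^c$. Since $I(Y,K)$ and $I(Y,K)^c$ are spanned by elements of this form and the products are bilinear, this gives $I(Y,K)\cdot I(Y,K)^c+I(Y,K)^c\cdot I(Y,K)\sst I(Y,K)^c$.

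Alternatively one can argue directly on basis elements: if $e_{xy}\in I(Y,K)$ (so $x,y\in Y$) and $e_{uv}\in I(Y,K)^c$ (so not both $u,v$ lie in $Y$), then $e_{xy}e_{uv}$ is either $0$ or equals $e_{xv}$ in case $y=u$, and in the latter case $u=y\in Y$ forces $v\notin Y$, whence $e_{xv}\in I(Y,K)^c$; the product $e_{uv}e_{xy}$ is treated in the same way. I do not expect any genuine obstacle here; the only point that needs a moment's care is the identification of $I(Y,K)^c$ with $\ker(f\mapsto e_Yfe_Y)$ — in particular the observation that the defining condition ``$f(x,y)=0$ whenever $x,y\in Y$'' already includes the diagonal entries — after which the idempotent computation closes the argument.
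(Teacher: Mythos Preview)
Your proof is correct. The paper argues directly in coordinates: for $f\in I(Y,K)$, $g\in I(Y,K)^c$ and $x,y\in Y$ it expands $(fg)(x,y)=\sum_{x\le z\le y}f(x,z)g(z,y)$ and observes that each summand vanishes (if $z\in Y$ then $g(z,y)=0$, if $z\notin Y$ then $f(x,z)=0$). Your idempotent argument is the coordinate-free version of exactly this case split: the identity $e_Y(fg)e_Y=e_Yf(e_Yge_Y)$ encodes the same dichotomy on the intermediate index, so the two proofs are essentially the same. Your alternative basis-element argument is likewise a specialization of the paper's computation. The only minor point to note is that $I(Y,K)=e_YI(X,K)e_Y$ uses that $Y$ inherits its order from $X$ (so no new relations $x\le y$ appear in $Y$ that were absent in $X$), which is built into the convention here.
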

\begin{proof}
	Let $f\in I(Y,K)$ and $g\in I(Y,K)^c$. For all $x,y\in Y$ and $x\le z\le y$ either $z\in Y$, in which case $g(z,y)=0$, or $z\not\in Y$, in which case $f(x,z)=0$, so $f(x,z)g(z,y)=0$. Therefore, $(fg)(x,y)=\sum_{x\le z\le y}f(x,z)g(z,y)=0$ for all $x,y\in Y$. This proves $I(Y,K)\cdot I(Y,K)^c\sst I(Y,K)^c$. The proof that $I(Y,K)^c\cdot I(Y,K)\sst I(Y,K)^c$ is analogous.
\end{proof}

	\begin{cor}\label{[I(Y_K)_ol(I(Y_K))]-sst-ol(I(Y_K))}
		It follows that $[I(Y,K),I(Y,K)^c]\sst I(Y,K)^c$.
	\end{cor}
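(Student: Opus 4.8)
The plan is to deduce this immediately from the preceding Lemma, using only that $I(Y,K)^c$ is a $K$-subspace of $I(X,K)$. Recall that $I(Y,K)^c=\{f\in I(X,K)\mid f(x,y)=0\text{ for all }x,y\in Y\}$ is cut out by linear conditions, hence is closed under addition and scalar multiplication; and, following the usual convention, $[I(Y,K),I(Y,K)^c]$ denotes the $K$-subspace $\gen{[f,g]\mid f\in I(Y,K),\ g\in I(Y,K)^c}$. So it suffices to show $[f,g]\in I(Y,K)^c$ for each such generator.

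First I would expand $[f,g]=fg-gf$. By the Lemma, $fg\in I(Y,K)\cdot I(Y,K)^c\sst I(Y,K)^c$ and $gf\in I(Y,K)^c\cdot I(Y,K)\sst I(Y,K)^c$. Since $I(Y,K)^c$ is a subspace, the difference $fg-gf$ again lies in $I(Y,K)^c$. By linearity this extends from generators to the whole subspace $[I(Y,K),I(Y,K)^c]$, giving the claim.

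There is essentially no obstacle: all the combinatorial content (the case analysis on whether an intermediate point $z$ with $x\le z\le y$ lies in $Y$) has already been absorbed into the Lemma, and the corollary is just the remark that a commutator is a difference of two products together with the closedness of $I(Y,K)^c$ under differences. The only point worth stating explicitly is the bracket-of-subspaces convention, after which the one-line argument above is complete.
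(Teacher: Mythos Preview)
Your proof is correct and is exactly what the paper intends: the corollary is stated without proof precisely because $[f,g]=fg-gf$ together with the preceding Lemma and the fact that $I(Y,K)^c$ is a subspace makes it immediate. There is nothing to add.
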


	Given a linear map $I(X,K)\to I(X,K)$, denote by $\vf_Y$ the associated linear map $I(Y,K)\to I(Y,K)$ that sends $f\in I(Y,K)$ to
	\begin{align*}
		\vf_Y(f)=\vf(f)_Y.
	\end{align*}

	\begin{lem}\label{vf_Y-dl-der}
		Let $\vf\in \Dl(I(X,K))$. Then $\vf_Y\in \Dl(I(Y,K))$.
	\end{lem}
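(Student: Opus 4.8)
The plan is to verify the defining identity \cref{vf(xy)=half(vf(x)y+xvf(y))} for $\vf_Y$ by hand, reducing it to the corresponding identity for $\vf$ through the decomposition $I(X,K)=I(Y,K)\oplus I(Y,K)^c$. First I would observe that $\vf_Y$ is linear, being the composition of $\vf$ with the projection $f\mapsto f_Y$ onto $I(Y,K)$. Then, fixing $f,g\in I(Y,K)$ and using that $I(Y,K)$ is a (Lie) subalgebra of $I(X,K)$, so that $[f,g]\in I(Y,K)$, I would write $\vf_Y([f,g])=\vf([f,g])_Y$ and apply the $\frac12$-derivation property of $\vf$ to obtain $\vf_Y([f,g])=\frac12\bigl([\vf(f),g]+[f,\vf(g)]\bigr)_Y$.

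The main step is then to pass the $Y$-component inside the brackets. I would decompose $\vf(f)=\vf_Y(f)+\vf(f)_Y^c$ and $\vf(g)=\vf_Y(g)+\vf(g)_Y^c$, so that $[\vf(f),g]=[\vf_Y(f),g]+[\vf(f)_Y^c,g]$ with $[\vf_Y(f),g]\in I(Y,K)$ and, by \cref{[I(Y_K)_ol(I(Y_K))]-sst-ol(I(Y_K))} (applicable since $g\in I(Y,K)$), $[\vf(f)_Y^c,g]\in I(Y,K)^c$. Taking the $Y$-component therefore kills the second summand: $[\vf(f),g]_Y=[\vf_Y(f),g]$. Running the same argument on $[f,\vf(g)]$ gives $[f,\vf(g)]_Y=[f,\vf_Y(g)]$, and substituting both back yields exactly $\vf_Y([f,g])=\frac12\bigl([\vf_Y(f),g]+[f,\vf_Y(g)]\bigr)$, which is the required identity.

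I do not anticipate any real obstacle: the only substantive ingredient is \cref{[I(Y_K)_ol(I(Y_K))]-sst-ol(I(Y_K))}, which ensures that commuting an element of $I(Y,K)^c$ against an element of $I(Y,K)$ produces no entries supported on $Y$; the remainder is bookkeeping with the direct sum $I(X,K)=I(Y,K)\oplus I(Y,K)^c$ and the linearity of the projection $f\mapsto f_Y$. The one detail to watch is that in both brackets on the right-hand side the $I(Y,K)^c$-factor is always paired with a factor lying in $I(Y,K)$, which is precisely what makes \cref{[I(Y_K)_ol(I(Y_K))]-sst-ol(I(Y_K))} applicable.
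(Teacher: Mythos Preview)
Your proposal is correct and follows essentially the same argument as the paper: decompose $\vf(f)$ and $\vf(g)$ into their $I(Y,K)$- and $I(Y,K)^c$-components, invoke \cref{[I(Y_K)_ol(I(Y_K))]-sst-ol(I(Y_K))} to see that the cross-terms land in $I(Y,K)^c$, and project onto $I(Y,K)$. The paper's proof is written slightly more compactly but is otherwise identical in substance.
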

	\begin{proof}
	For all $f,g\in I(Y,K)$ we have
\begin{align*}
	\vf([f,g])&=\frac 12([\vf(f),g]+[f,\vf(g)])=\frac 12([\vf_Y(f)+\vf(f)_Y^c,g]+[f,\vf_Y(g)+\vf(g)_Y^c])\\
	&=\frac 12([\vf_Y(f),g]+[f,\vf_Y(g)])+\frac 12([\vf(f)_Y^c,g]+[f,\vf(g)_Y^c]),
\end{align*}
where $\frac 12([\vf_Y(f),g]+[f,\vf_Y(g)])\in I(Y,K)$ and $\frac 12([\vf(f)_Y^c,g]+[f,\vf(g)_Y^c])\in I(Y,K)^c$ by \cref{[I(Y_K)_ol(I(Y_K))]-sst-ol(I(Y_K))}. Hence,
$
	\vf_Y([f,g])=\frac 12([\vf_Y(f),g]+[f,\vf_Y(g)]).
$
	\end{proof}

\begin{lem}
	Let $\vf\in\Dl(I(X,K))$ and $x<y$. Then 
	\begin{align}
		[\vf(e_x),e_{xy}]&=(\vf(e_x)(x,x)-\vf(e_x)(y,y))e_{xy},\label{[vf(e_x)_e_xy]=(vf(e_x)(x_x)-vf(e_x)(y_y))e_xy}\\
		[e_{xy},\vf(e_y)]&=(\vf(e_y)(y,y)-\vf(e_y)(x,x))e_{xy}.\label{[e_xy_vf(e_y)]=(vf(e_y)(y_y)-vf(e_y)(x_x))e_xy}
	\end{align}
\end{lem}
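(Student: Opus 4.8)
The plan is to first determine, from \cref{vf(xy)=half(vf(x)y+xvf(y))} applied to a handful of elementary identities among the diagonal idempotents, which coordinates of $\vf(e_x)$ and $\vf(e_y)$ are forced to vanish, and then to expand the two commutators in the basis $\{e_{uv}\}$ and simply read off the coefficients.

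First I would use that $e_x,e_y$ are idempotents with $e_xe_y=e_ye_x=0$ (as $x\ne y$). Applying \cref{vf(xy)=half(vf(x)y+xvf(y))} to $[e_x,e_x]=0$, $[e_y,e_y]=0$ and $[e_x,e_y]=0$ gives, respectively,
\[
[\vf(e_x),e_x]=0,\qquad [\vf(e_y),e_y]=0,\qquad [\vf(e_x),e_y]+[e_x,\vf(e_y)]=0 .
\]
Expanding the first equality by means of $e_{pq}e_{rs}=e_{ps}$ for $q=r$ and $0$ otherwise, it becomes $\sum_{u\le x}\vf(e_x)(u,x)e_{ux}=\sum_{x\le v}\vf(e_x)(x,v)e_{xv}$, and since the only basis element common to the two sides is $e_{xx}$ we get $\vf(e_x)(u,x)=0$ for all $u<x$; symmetrically the second equality yields $\vf(e_y)(y,v)=0$ for all $v>y$. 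For the third equality I would write $[\vf(e_x),e_y]+[e_x,\vf(e_y)]$ as the sum of $\vf(e_x)e_y$, $-e_y\vf(e_x)$, $e_x\vf(e_y)$, $-\vf(e_y)e_x$, each of which is a $K$-combination of the basis elements $e_{uv}$; since $x<y$, a basis element $e_{yv}$ with $v>y$ can occur only in the summand $-e_y\vf(e_x)$, so its coefficient $-\vf(e_x)(y,v)$ must vanish, i.e.\ $\vf(e_x)(y,v)=0$ for all $v>y$; symmetrically $\vf(e_y)(u,x)=0$ for all $u<x$.

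The rest is a direct computation. Expanding
\[
[\vf(e_x),e_{xy}]=\vf(e_x)e_{xy}-e_{xy}\vf(e_x)=\sum_{u\le x}\vf(e_x)(u,x)e_{uy}-\sum_{y\le v}\vf(e_x)(y,v)e_{xv},
\]
the two vanishing statements for $\vf(e_x)$ kill every term except those with $u=x$ and $v=y$, leaving $\bigl(\vf(e_x)(x,x)-\vf(e_x)(y,y)\bigr)e_{xy}$, which is the first claimed identity; the second is obtained in exactly the same way from $[e_{xy},\vf(e_y)]=\sum_{y\le v}\vf(e_y)(y,v)e_{xv}-\sum_{u\le x}\vf(e_y)(u,x)e_{uy}$, using $\vf(e_y)(y,v)=0$ for $v>y$ and $\vf(e_y)(u,x)=0$ for $u<x$. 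The only step requiring genuine care is the coefficient comparison for $[\vf(e_x),e_y]+[e_x,\vf(e_y)]=0$: one must verify that the ``wrong'' basis elements $e_{yv}$ ($v>y$) and $e_{ux}$ ($u<x$) are each produced by exactly one of the four summands, and this is precisely where the strict inequality $x<y$ enters (for instance $v>y>x$ rules out $v=x$). Everything else is routine bookkeeping with the multiplication rule for the $e_{uv}$.
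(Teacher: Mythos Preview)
Your argument has a genuine gap at the very first step. Applying the $\frac 12$-derivation identity \cref{vf(xy)=half(vf(x)y+xvf(y))} to $[e_x,e_x]=0$ gives
\[
0=\vf([e_x,e_x])=\tfrac12\bigl([\vf(e_x),e_x]+[e_x,\vf(e_x)]\bigr),
\]
and the right-hand side is automatically zero by the antisymmetry of the Lie bracket. So this is the tautology $0=0$ and does \emph{not} yield $[\vf(e_x),e_x]=0$; likewise you do not get $[\vf(e_y),e_y]=0$. Consequently you have not established $\vf(e_x)(u,x)=0$ for $u<x$ nor $\vf(e_y)(y,v)=0$ for $v>y$, and these are precisely the vanishings you use in the final expansion of $[\vf(e_x),e_{xy}]$ and $[e_{xy},\vf(e_y)]$ to kill the terms with $u<x$ and $v>y$. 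Your use of the \emph{third} identity $[\vf(e_x),e_y]+[e_x,\vf(e_y)]=0$ is correct and does give $\vf(e_x)(y,v)=0$ for $v>y$ and $\vf(e_y)(u,x)=0$ for $u<x$, but that supplies only half of what you need for each commutator.

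The missing vanishings are true, but they require a genuinely different input. The paper obtains them by restricting $\vf$ to a three-element chain $Y$ (either $\{u,x,y\}$ with $u<x$ or $\{x,y,v\}$ with $v>y$), using \cref{vf_Y-dl-der} to see that $\vf_Y\in\Dl(I(Y,K))\cong\Dl(T_3(K))$, and then invoking the explicit description of $\frac 12$-derivations of $T_3(K)$ from \cite[Proposition~10]{KK7}. If you want to stay elementary, you would need further identities beyond commutators of two idempotents---for instance identities coming from $[e_u,e_{ux}]=e_{ux}$ or from choosing a third comparable element---but the single relation $[e_x,e_x]=0$ will never do the job.
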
	
\begin{proof}
	Take an arbitrary $u<x$ and let $Y=\{u,x,y\}\sst X$. Then $\vf_Y\in\Dl(I(Y,K))$ by \cref{vf_Y-dl-der}. Observe that the isomorphism of chains $\{u,x,y\}\sst X$ and $\{1,2,3\}\sst\mathbb{N}$ induces an isomorphism of algebras $I(Y,K)\cong T_3(K)$, so by \cite[Proposition 10]{KK7} we have $\vf_Y(e_x)(u,x)=0$. Hence, $\vf(e_x)(u,x)=\vf(e_x)_Y(u,x)=\vf_Y(e_x)(u,x)=0$. It follows that
	\begin{align}\label{[vf(e_x)_e_xy](u_y)=0}
		[\vf(e_x),e_{xy}](u,y)=0,\text{ for all }u<x<y.
	\end{align}
Similarly, for any $v>y$ consider $Y=\{x,y,v\}\sst X$. We again have $I(Y,K)\cong T_3(K)$, and $\vf_Y(e_x)(y,v)=0$ by \cite[Proposition 10]{KK7}. Since $\vf_Y(e_x)(y,v)=\vf(e_x)_Y(y,v)=\vf(e_x)(y,v)$,
\begin{align}\label{[vf(e_x)_e_xy](x_v)=0}
	[\vf(e_x),e_{xy}](x,v)=0,\text{ for all }x<y<v.
\end{align}
Combining \cref{[vf(e_x)_e_xy](u_y)=0,[vf(e_x)_e_xy](x_v)=0}, we conclude that
\begin{align*}
	[\vf(e_x),e_{xy}]=[\vf(e_x),e_{xy}](x,y)e_{xy}=(\vf(e_x)(x,x)-\vf(e_x)(y,y))e_{xy},
\end{align*}
whence \cref{[vf(e_x)_e_xy]=(vf(e_x)(x_x)-vf(e_x)(y_y))e_xy}. The proof of \cref{[e_xy_vf(e_y)]=(vf(e_y)(y_y)-vf(e_y)(x_x))e_xy} is similar.
\end{proof}

\begin{lem}\label{vf(e_xy)=multiple-of-e_xy}
		Let $\vf\in\Dl(I(X,K))$ and $x<y$. Then 
	\begin{align}\label{vf(e_xy)=(vf(e_y)(y_y)-vf(e_y)(x_x))e_xy}
		\vf(e_{xy})=(\vf(e_x)(x,x)-\vf(e_x)(y,y))e_{xy}=(\vf(e_y)(y,y)-\vf(e_y)(x,x))e_{xy}.
	\end{align}
\end{lem}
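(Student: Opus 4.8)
The plan is to compute $\vf(e_{xy})$ for $x<y$ by exploiting the identity $e_{xy}=e_xe_{xy}=e_{xy}e_y$, so that $e_{xy}=[e_x,e_{xy}]=[e_{xy},e_y]$ in the Lie algebra (since $e_xe_{xy}=e_{xy}$ while $e_{xy}e_x=0$, and similarly on the other side). Applying the $\frac12$-derivation identity \cref{vf(xy)=half(vf(x)y+xvf(y))} to $[e_x,e_{xy}]$ gives
\begin{align*}
	\vf(e_{xy})=\vf([e_x,e_{xy}])=\frac 12\big([\vf(e_x),e_{xy}]+[e_x,\vf(e_{xy})]\big),
\end{align*}
and likewise $\vf(e_{xy})=\frac 12\big([\vf(e_{xy}),e_y]+[e_{xy},\vf(e_y)]\big)$ from $[e_{xy},e_y]$. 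So the first step is to understand the terms $[e_x,\vf(e_{xy})]$ and $[e_{xy},\vf(e_y)]$, and to plug in the already-established formulas \cref{[vf(e_x)_e_xy]=(vf(e_x)(x_x)-vf(e_x)(y_y))e_xy} and \cref{[e_xy_vf(e_y)]=(vf(e_y)(y_y)-vf(e_y)(x_x))e_xy} for $[\vf(e_x),e_{xy}]$ and $[e_{xy},\vf(e_y)]$.

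Next I would pin down $\vf(e_{xy})$ more precisely. Since $[I(X,K),I(X,K)]=\gen{e_{uv}\mid u<v}$ is invariant under any $\frac 12$-derivation (noted right after \cref{12der}), $\vf(e_{xy})$ lies in that span, say $\vf(e_{xy})=\sum_{u<v}c_{uv}e_{uv}$. Then $[e_x,\vf(e_{xy})]=\sum_{x<v}c_{xv}e_{xv}-\sum_{u<x}c_{ux}e_{ux}$, and one can argue that only the $e_{xy}$-component survives: the components $e_{xv}$ with $v\ne y$ and $e_{ux}$ must vanish after combining the two expressions for $\vf(e_{xy})$, because the left-hand side $\vf(e_{xy})$ is a single vector and the relation
\begin{align*}
	2\vf(e_{xy})=[\vf(e_x),e_{xy}]+[e_x,\vf(e_{xy})]
\end{align*}
forces $\vf(e_{xy})$ to be supported where $[e_x,\vf(e_{xy})]$ can contribute — combined with the symmetric relation coming from $e_y$ on the right, which localizes the support to $e_{xy}$ itself. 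Substituting back, $2\vf(e_{xy})=(\vf(e_x)(x,x)-\vf(e_x)(y,y))e_{xy}+\vf(e_{xy})$ after we know $[e_x,\vf(e_{xy})]=\vf(e_{xy})$ (as $\vf(e_{xy})$ is a multiple of $e_{xy}$ and $[e_x,e_{xy}]=e_{xy}$). Solving gives $\vf(e_{xy})=(\vf(e_x)(x,x)-\vf(e_x)(y,y))e_{xy}$, and the analogous computation with $e_y$ gives the second equality.

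The main obstacle is the localization argument: showing that $\vf(e_{xy})$ is actually a scalar multiple of $e_{xy}$ and not a more general element of the commutator ideal. I would handle this cleanly by first using one of the two halves — say apply the identity to $[e_x,e_{xy}]$ to get that $\vf(e_{xy})$ has no component $e_{uv}$ with $u\ne x$ and $u<x$ ruled out, then apply it to $[e_{xy},e_y]$ to kill all components $e_{uv}$ with $v\ne y$ — the intersection leaves only $e_{xy}$. A subtle point is that $[e_x,e_{xy}]=e_{xy}$ requires $x\ne y$, which holds since $x<y$, and that multiplying by the central-like idempotents $e_x$, $e_y$ on the appropriate side is compatible with the bracket; these are routine verifications using $e_xe_{uv}=e_{uv}$ iff $u=x$ and $e_{uv}e_x=e_{uv}$ iff $v=x$. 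Once the scalar form is known, the two displayed equalities in \cref{vf(e_xy)=(vf(e_y)(y_y)-vf(e_y)(x_x))e_xy} drop out immediately, and in particular they show $\vf(e_x)(x,x)-\vf(e_x)(y,y)=\vf(e_y)(y,y)-\vf(e_y)(x,x)$ as a useful by-product.
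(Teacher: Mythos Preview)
Your proposal is correct, and in fact your localization argument is cleaner than you give it credit for: once you know $\vf(e_{xy})\in[I(X,K),I(X,K)]$ and write it as $\sum_{u<v}c_{uv}e_{uv}$, comparing coefficients in $2\vf(e_{xy})=(\vf(e_x)(x,x)-\vf(e_x)(y,y))e_{xy}+[e_x,\vf(e_{xy})]$ already kills every $c_{uv}$ with $(u,v)\ne(x,y)$ (the cases $x\notin\{u,v\}$, $v=x$, and $u=x\ne y$ give $2c_{uv}=0$, $3c_{uv}=0$, $c_{uv}=0$ respectively), so you do not even need the second identity with $e_y$ for the localization, only for the second displayed equality.

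This differs from the paper's route. The paper does not appeal to the invariance of $[\LL,\LL]$; instead it first proves $\vf(e_{xy})e_x=0$ by restricting $\vf$ to a three-element chain $Y=\{u,x,y\}$ via \cref{vf_Y-dl-der} and invoking \cite[Proposition~10]{KK7}, then writes $2\vf(e_{xy})=(\vf(e_x)(x,x)-\vf(e_x)(y,y))e_{xy}+e_x\vf(e_{xy})$ and left-multiplies by $e_x$ to solve. Your coefficient-comparison avoids the extra appeal to the restriction lemma and the external citation, making this particular step more self-contained; the paper's approach, on the other hand, reuses the same $\vf_Y$-machinery that was already set up for the preceding lemma, so it is more uniform with the surrounding arguments.
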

\begin{proof}
Let $u\le x$ and $Y=\{u,x,y\}$. Then $\vf(e_{xy})(u,x)=\vf_Y(e_{xy})(u,x)=0$, where the latter follows from \cref{vf_Y-dl-der} and \cite[Proposition 10]{KK7}. Hence, 
\begin{align}\label{vf(e_xy)e_x=0}
	\vf(e_{xy})e_x=0.
\end{align}
Now, applying $\vf$ to $e_{xy}=[e_x,e_{xy}]$ and using \cref{[vf(e_x)_e_xy]=(vf(e_x)(x_x)-vf(e_x)(y_y))e_xy,vf(e_xy)e_x=0}, we have
\begin{align}\label{2vf(e_xy)=(vf(e_x)(x_x)-vf(e_x)(y_y))e_xy+e_xvf(e_xy)}
	2\vf(e_{xy})=(\vf(e_x)(x,x)-\vf(e_x)(y,y))e_{xy} + e_x\vf(e_{xy}).
\end{align}
The rest of the proof is very similar to that of \cite[Lemma 7]{KK7}. Multiplying \cref{2vf(e_xy)=(vf(e_x)(x_x)-vf(e_x)(y_y))e_xy+e_xvf(e_xy)} by $e_x$ on the left, we get
\begin{align*}
	e_x\vf(e_{xy})=(\vf(e_x)(x,x)-\vf(e_x)(y,y))e_{xy},
\end{align*}
so \cref{2vf(e_xy)=(vf(e_x)(x_x)-vf(e_x)(y_y))e_xy+e_xvf(e_xy)} implies the first equality of \cref{vf(e_xy)=(vf(e_y)(y_y)-vf(e_y)(x_x))e_xy} in view of $\ch (K)\ne 2$. The second one is proved applying $\vf$ to $e_{xy}=[e_{xy},e_y]$ and using \cref{[e_xy_vf(e_y)]=(vf(e_y)(y_y)-vf(e_y)(x_x))e_xy}. 
\end{proof}

The next result is a generalization of \cite[Lemma 11]{KK7}.
\begin{lem}\label{vf(e_xy)(x_y)=const}
	Let $\vf\in\Dl(T_n(F))$. 
	\begin{enumerate}
		\item\label{vf(e_xx)(u_u)=vf(e_xx)(v_v)} For all $x\in X$ and $u\le v$ with $x\not\in\{u,v\}$ we have $\vf(e_x)(u,u)=\vf(e_x)(v,v)$.
		\item\label{vf(e_xy)(x_y)=vf(e_uv)(u_v)} For all $x<y$ and $u<v$ belonging to a same chain we have $\vf(e_{xy})(x,y)=\vf(e_{uv})(u,v)$.
		\item\label{vf(e_xx)(u_v)=0} For all $x\in X$ and $u<v$ we have $\vf(e_x)(u,v)=0$, unless $\Min(X)\ni x=u<v\in\Max(X)$ or $\Min(X)\ni u<v=x\in\Max(X)$.
		\item\label{vf(e_xx)(x_y)=-vf(e_yy)(x_y)} For all $x\in \Min(X)$ and $y\in\Max(X)$ we have $\vf(e_x)(x,y)=-\vf(e_y)(x,y)$.
	\end{enumerate}
\end{lem}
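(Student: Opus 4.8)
The plan is to extract everything by feeding two very simple vanishing brackets into the $\frac12$-derivation identity \cref{vf(xy)=half(vf(x)y+xvf(y))} and comparing coefficients in the basis $\{e_{pq}\}$ of $I(X,K)$. First I would record two elementary facts about an arbitrary $\vf\in\Dl(I(X,K))$ and $x\in X$. \emph{Observation A}: if $u<v$ and $x\notin\{u,v\}$, then $[e_x,e_{uv}]=0$, and since $\vf(e_{uv})$ is a scalar multiple of $e_{uv}$ by \cref{vf(e_xy)=multiple-of-e_xy} we also have $[e_x,\vf(e_{uv})]=0$; hence $[\vf(e_x),e_{uv}]=0$, and expanding $\vf(e_x)e_{uv}-e_{uv}\vf(e_x)$ and reading off coefficients yields $\vf(e_x)(u,u)=\vf(e_x)(v,v)$ (together with $\vf(e_x)(p,u)=0$ for $p<u$ and $\vf(e_x)(v,q)=0$ for $q>v$). \emph{Observation B}: if $x$ and $y$ are incomparable, then expanding $0=[\vf(e_x),e_y]+[e_x,\vf(e_y)]$ one checks that the basis elements appearing in the four resulting sums are pairwise distinct, whence $\vf(e_x)(p,y)=0$ for all $p<y$ and $\vf(e_x)(y,q)=0$ for all $q>y$.

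Part \cref{vf(e_xx)(u_u)=vf(e_xx)(v_v)} is then immediate from Observation A (the case $u=v$ being trivial). For part \cref{vf(e_xx)(u_v)=0} I would argue by contradiction: if $\vf(e_x)(u,v)\ne0$ with $u<v$, Observation B forces $u$ and $v$ to be comparable to $x$, so $\{x,u,v\}$ is a chain and $x$ occupies one of the positions $x<u<v$, $u<x<v$, $u<v<x$, $x=u<v$ or $u<v=x$. In the first three cases take $Y=\{x,u,v\}$; in the last two, where $\{x,u,v\}=\{u,v\}$, enlarge $\{u,v\}$ by one more element of $X$ lying below $u$ (available when $u\notin\Min(X)$) or above $v$ (available when $v\notin\Max(X)$). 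In every case $I(Y,K)\cong T_3(K)$, and the positions occupied by $e_x$ and by the pair $(u,v)$ inside the three-element chain $Y$ are ones for which \cite[Proposition 10]{KK7} forces $\vf_Y(e_x)(u,v)=0$; since $\vf_Y\in\Dl(I(Y,K))$ by \cref{vf_Y-dl-der} and $\vf(e_x)(u,v)=\vf_Y(e_x)(u,v)$, this contradicts the assumption. A suitable $Y$ is unavailable precisely when $x=u\in\Min(X)$ and $v\in\Max(X)$, or $x=v\in\Max(X)$ and $u\in\Min(X)$, which are exactly the exceptions in the statement.

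For part \cref{vf(e_xy)(x_y)=vf(e_uv)(u_v)} I would put $\lb(x,y):=\vf(e_{xy})(x,y)$, so that $\lb(x,y)=\vf(e_x)(x,x)-\vf(e_x)(y,y)=\vf(e_y)(y,y)-\vf(e_y)(x,x)$ by \cref{vf(e_xy)=multiple-of-e_xy}. For a chain $x<y<z$ both differences $\lb(x,z)-\lb(x,y)=\vf(e_x)(y,y)-\vf(e_x)(z,z)$ and $\lb(x,z)-\lb(y,z)=\vf(e_z)(y,y)-\vf(e_z)(x,x)$ vanish by part \cref{vf(e_xx)(u_u)=vf(e_xx)(v_v)}, so $\lb(x,y)=\lb(x,z)=\lb(y,z)$; applying this repeatedly to triples inside a chain containing $\{x,y,u,v\}$ lets one slide the endpoints and conclude $\lb(x,y)=\lb(u,v)$. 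For part \cref{vf(e_xx)(x_y)=-vf(e_yy)(x_y)}, with $x\in\Min(X)$, $y\in\Max(X)$ and $x<y$ (if $x$ and $y$ are incomparable there is nothing to prove), minimality of $x$ and maximality of $y$ collapse the identity $0=[\vf(e_x),e_y]+[e_x,\vf(e_y)]$ to $\sum_{p<y}\vf(e_x)(p,y)e_{py}+\sum_{x<q}\vf(e_y)(x,q)e_{xq}=0$, and the coefficient of $e_{xy}$ reads $\vf(e_x)(x,y)+\vf(e_y)(x,y)=0$.

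The only genuine obstacle is the bookkeeping in part \cref{vf(e_xx)(u_v)=0}: one has to verify that the basis elements compared in Observations A and B are really pairwise distinct, and --- the delicate point --- choose the three-element chain $Y$ for the reduction to $T_3(K)$ so that $e_x$ and $(u,v)$ fall into positions for which the $T_n$-result genuinely forces a zero. This is precisely why an entry such as $\vf(e_x)(x,v)$ can only be seen to vanish after enlarging $\{u,v\}$ by an element of $X$ below $u$ or above $v$. Everything else is a routine commutator computation in $I(X,K)$.
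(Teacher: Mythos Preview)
Your proof is correct. Part \cref{vf(e_xx)(u_u)=vf(e_xx)(v_v)} and the $x\in\{u,v\}$ subcase of \cref{vf(e_xx)(u_v)=0} match the paper's argument verbatim. Where you diverge the two approaches trade places in how much they lean on \cite{KK7}. For \cref{vf(e_xy)(x_y)=vf(e_uv)(u_v)} the paper simply restricts to the chain $Y=\{x,y,u,v\}$ and quotes \cite[Lemma~11(ii)]{KK7}, whereas your triple identity $\lambda(x,y)=\lambda(x,z)=\lambda(y,z)$, obtained from part \cref{vf(e_xx)(u_u)=vf(e_xx)(v_v)} and \cref{vf(e_xy)=multiple-of-e_xy}, is self-contained and arguably cleaner. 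Likewise for \cref{vf(e_xx)(x_y)=-vf(e_yy)(x_y)} the paper restricts to $Y=\{x,y\}\cong T_2$ and cites \cite[Proposition~10]{KK7}, while your direct expansion of $[\vf(e_x),e_y]+[e_x,\vf(e_y)]=0$ needs no external input. On the other hand, for the subcase $x\notin\{u,v\}$ of \cref{vf(e_xx)(u_v)=0} the paper is more economical: it reads $\vf(e_x)(u,v)=0$ straight from sandwiching $[\vf(e_x),e_u]+[e_x,\vf(e_u)]=0$ between $e_u$ and $e_v$, bypassing both your Observation~B and the subsequent case split with restriction to $T_3$. Net effect: you invoke \cite[Proposition~10]{KK7} in more places in \cref{vf(e_xx)(u_v)=0} but eliminate it entirely from \cref{vf(e_xy)(x_y)=vf(e_uv)(u_v)} and \cref{vf(e_xx)(x_y)=-vf(e_yy)(x_y)}.
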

\begin{proof}
	\textit{\cref{vf(e_xx)(u_u)=vf(e_xx)(v_v)}}. The proof is totally analogous to that of \cite[Lemma 11 (i)]{KK7}: apply $\vf$ to $[e_x,e_{uv}]=0$ and multiply the resulting equality by $e_u$ on the left and by $e_v$ on the right.
	
	\textit{\cref{vf(e_xy)(x_y)=vf(e_uv)(u_v)}}. Observe that $Y=\{x,y,u,v\}$ is a chain and denote $a=\Min(Y)$ and $b=\Max(Y)$. Then by \cref{vf_Y-dl-der} and \cite[Lemma 11 (ii)]{KK7} we have
	\begin{align*}
		\vf(e_{xy})(x,y)=\vf_Y(e_{xy})(x,y)=\vf_Y(e_{ab})(a,b)=\vf_Y(e_{uv})(u,v)=\vf(e_{uv})(u,v).
	\end{align*}

	\textit{\cref{vf(e_xx)(u_v)=0}}. If $x\not\in\{u,v\}$, then $\vf(e_x)(u,v)=0$ follows by applying $\vf$ to $[e_x,e_u]=0$ and multiplying the resulting equality by $e_u$ on the left and by $e_v$ on the right. Let $u=x$. If $u\not\in\Min(X)$, then choose $t<u$ and set $Y=\{t,x,v\}$. If $v\not\in\Max(X)$, then choose $w>v$ and set $Y=\{x,v,w\}$. Thanks to \cref{vf_Y-dl-der} and \cite[Proposition 10]{KK7} we have $\vf(e_x)(u,v)=\vf_Y(e_x)(x,v)=0$. The case $v=x$ is similar.
	
	\textit{\cref{vf(e_xx)(x_y)=-vf(e_yy)(x_y)}}. Set $Y=\{x,y\}$. Then $\vf(e_x)(x,y)=\vf_Y(e_x)(x,y)=-\vf_Y(e_y)(x,y)=-\vf(e_y)(x,y)$ by \cref{vf_Y-dl-der} and \cite[Proposition 10]{KK7}.
\end{proof}

\begin{cor}\label{vf(D)-sst-Z<=>vf([I_I])=0}
	Let $\vf\in\Dl(I(X,K))$. Then 
	\begin{align}\label{vf(D(X_K))-sst-Z(I(X_K))-iff-vf([I(X_K)_I(X_K)])=0}
		\vf(D(X,K))\sst Z(I(X,K))\iff \vf([I(X,K),I(X,K)])=\{0\}.
	\end{align}
\end{cor}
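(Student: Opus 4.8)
The plan is to translate both implications into statements about the scalars $\vf(e_x)(u,v)$, by means of the decompositions $I(X,K)=D(X,K)\oplus[I(X,K),I(X,K)]$, $[I(X,K),I(X,K)]=\gen{e_{xy}\mid x<y}$ and $Z(I(X,K))=\gen\dl$, and then to feed in \cref{vf(e_xy)=multiple-of-e_xy,vf(e_xy)(x_y)=const} together with the connectedness of $X$. One implication is immediate: if $\vf(D(X,K))\sst Z(I(X,K))=\gen\dl$, then every $\vf(e_x)$ is a scalar multiple of $\dl$, so that $\vf(e_x)(x,x)=\vf(e_x)(y,y)$ whenever $x<y$, whence $\vf(e_{xy})=0$ for all $x<y$ by \cref{vf(e_xy)=multiple-of-e_xy}; since these elements span $[I(X,K),I(X,K)]$, we obtain $\vf([I(X,K),I(X,K)])=\{0\}$.

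For the converse I would assume $\vf([I(X,K),I(X,K)])=\{0\}$, that is, $\vf(e_{xy})=0$ for all $x<y$, and show that each $\vf(e_x)$ is a scalar multiple of $\dl$, which amounts to saying that $\vf(e_x)$ has constant diagonal and vanishing strictly upper part. For the diagonal: given a comparable pair $u<v$, if $x\notin\{u,v\}$ the equality $\vf(e_x)(u,u)=\vf(e_x)(v,v)$ follows from \cref{vf(e_xy)(x_y)=const}, while if $x\in\{u,v\}$ it follows from $\vf(e_{uv})=0$ through \cref{vf(e_xy)=multiple-of-e_xy}; walking along a path joining two given elements of $X$ (such a path exists because $X$ is connected) then shows that $u\mapsto\vf(e_x)(u,u)$ is constant. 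For the strictly upper part, \cref{vf(e_xy)(x_y)=const} already gives $\vf(e_x)(u,v)=0$ for every $u<v$, with the possible exception of the cases $x=u\in\Min(X)$, $v\in\Max(X)$ and $u\in\Min(X)$, $v=x\in\Max(X)$.

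The hard part will be these remaining min-to-max entries: one still has to show that $\vf(e_x)(x,y)=0$ whenever $x\in\Min(X)$, $y\in\Max(X)$ and $x<y$ (the other case being symmetric). My plan here is to apply $\vf$ to the relations $e_{xy}=[e_x,e_{xy}]$ and $e_{xy}=[e_{xy},e_y]$ (and, when some $z$ satisfies $x<z<y$, to appropriate relations inside the three-element subposet $\{x,z,y\}$ as well), substitute $\vf(e_{xy})=0$ and $\vf(e_{zy})=0$, read off the $e_{xy}$-coefficients of the resulting identities, and combine this with $\vf(e_x)(x,y)=-\vf(e_y)(x,y)$ from \cref{vf(e_xy)(x_y)=const} and with the $\vf$-invariance of $Z(I(X,K))$, which forces $\vf(\dl)=\sum_{x\in X}\vf(e_x)\in\gen\dl$. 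All the rest of the argument is routine bookkeeping with the two direct-sum decompositions; these min-to-max entries, which make the strictly upper part of each $\vf(e_x)$ an element of $Z([I(X,K),I(X,K)])=\gen{e_{xy}\mid\Min(X)\ni x<y\in\Max(X)}$, are the one place where the combinatorial structure of $X$ genuinely intervenes, and I expect the real difficulty of the proof to lie there.
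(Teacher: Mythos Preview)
Your forward implication and the diagonal part of the converse are correct and coincide with the paper's proof. You are also right that the paper's argument for ``$\Leftarrow$'' says nothing about the off-diagonal entries of $\vf(e_x)$; you noticed a real gap. The trouble is that this gap cannot be closed: the ``$\Leftarrow$'' direction, read literally, is \emph{false}.

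Take any nonzero $c\in Z([I(X,K),I(X,K)])=\gen{e_{xy}\mid \Min(X)\ni x<y\in\Max(X)}$ and let $\vf=\ad_c$. Since $c$ centralises $[I(X,K),I(X,K)]$, we have $\vf([I(X,K),I(X,K)])=\{0\}$. But \cref{ad_c-is-halfder} computes $\ad_c(e_x)=\vf(e_x)_J$, so for $x\in\Min(X)$ with $c(x,y)\neq 0$ one gets $\ad_c(e_x)=-\sum_{x<v\in\Max(X)}c(x,v)e_{xv}\neq 0$, which lies in $[I(X,K),I(X,K)]$ and hence not in $Z(I(X,K))=\gen\dl$. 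Thus $\ad_c(D(X,K))\not\subseteq Z(I(X,K))$.

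Your proposed attack on the ``hard part'' therefore cannot succeed. Concretely, applying $\vf$ to $e_{xy}=[e_x,e_{xy}]$ with $\vf(e_{xy})=0$ only yields $[\vf(e_x),e_{xy}]=0$, whose $(x,y)$-entry is $\vf(e_x)(x,x)-\vf(e_x)(y,y)$, not $\vf(e_x)(x,y)$; and applying $\vf$ to $[e_x,e_y]=0$ gives back exactly $\vf(e_x)(x,y)=-\vf(e_y)(x,y)$, which you already had from \cref{vf(e_xy)(x_y)=const}\cref{vf(e_xx)(x_y)=-vf(e_yy)(x_y)}. No further constraint on $\vf(e_x)(x,y)$ is available, and the $\ad_c$ example shows why.

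What the paper's proof actually establishes is $\vf([I(X,K),I(X,K)])=\{0\}\Rightarrow\vf(e_x)_D\in Z(I(X,K))$ for all $x$, and this is all that is used downstream: both invocations of the corollary in the proof of \cref{descr-TP-on-T(X_K)} are either in the ``$\Rightarrow$'' direction or applied to the diagonality-preserving maps $\vf_{\sg}$, for which $\vf_\sg(e_x)_J=0$ by construction. So the right fix is to add the hypothesis that $\vf$ is diagonality preserving (or to weaken the conclusion to $\vf(e_x)_D\in Z(I(X,K))$), not to try to prove the min-to-max entries vanish.
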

\begin{proof}
	If $\vf(e_x)\in Z(I(X,K))$, then $\vf(e_{xy})=0$ for all $x<y$ by \cref{vf(e_xy)=multiple-of-e_xy}, proving the ``$\impl$''-part of \cref{vf(D(X_K))-sst-Z(I(X_K))-iff-vf([I(X_K)_I(X_K)])=0}. Conversely, if $\vf(e_{ux})=\vf(e_{xv})=0$, then $\vf(e_x)(u,u)=\vf(e_x)(x,x)=\vf(e_x)(v,v)$ for all $u<x<v$ by \cref{vf(e_xy)=multiple-of-e_xy}. Since by \cref{vf(e_xy)(x_y)=const}\cref{vf(e_xx)(u_u)=vf(e_xx)(v_v)} we always have $\vf(e_x)(u,u)=\vf(e_x)(v,v)$, whenever $x\not\in\{u,v\}$ and $u<v$, then $\vf(e_x)\in Z(I(X,K))$ by connectedness of $X$.
\end{proof}

\begin{lem}\label{ad_c-is-halfder}
	Let $\vf\in\Dl(I(X,K))$ and define $c\in Z([I(X,K),I(X,K)])$ by
	\begin{align}\label{c(x_y)=vf(e_y)(x_y)}
		c(x,y)=
		\begin{cases}
			\vf(e_y)(x,y), & \Min(X)\ni x<y\in\Max(X),\\
			0, & \text{otherwise}.
		\end{cases}
	\end{align}
	Then for all $x<y$
	\begin{align}\label{af_c(e_xy)-and-af_c(e_x)}
		\ad_c(e_{xy})=0\text{ and }\ad_c(e_x)=\vf(e_x)_J.
	\end{align}
\end{lem}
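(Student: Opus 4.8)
The plan is to verify the two identities in \cref{af_c(e_xy)-and-af_c(e_x)} by direct computation with $\ad_c$, using the explicit formula \cref{c(x_y)=vf(e_y)(x_y)} for $c$ together with the structural facts about $\vf$ established in \cref{vf(e_xy)=multiple-of-e_xy,vf(e_xy)(x_y)=const}. First I would check that $c$ as defined really does lie in $Z([I(X,K),I(X,K)])$: by the description recalled in \cref{prelim}, that center is $\gen{e_{uv}\mid \Min(X)\ni u<v\in\Max(X)}$, and $c$ is by construction a linear combination of exactly those $e_{uv}$, so this is immediate.

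For the first identity $\ad_c(e_{xy})=[c,e_{xy}]=0$: since $c=\sum_{\Min(X)\ni u<v\in\Max(X)}c(u,v)e_{uv}$, we have $e_{uv}e_{xy}\ne 0$ only if $v=x$, which forces $x\in\Max(X)$, contradicting $x<y$; similarly $e_{xy}e_{uv}\ne 0$ only if $y=u\in\Min(X)$, contradicting $x<y$. Hence both products $ce_{xy}$ and $e_{xy}c$ vanish and $\ad_c(e_{xy})=0$.

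For the second identity $\ad_c(e_x)=[c,e_x]=\vf(e_x)_J$: computing the commutator, $[c,e_x]=\sum_{u<v}c(u,v)[e_{uv},e_x]=\sum_{v}c(x,v)e_{xv}-\sum_{u}c(u,x)e_{ux}$, where the sums run over $v\in\Max(X)$ with $\Min(X)\ni x<v$ in the first term (so this is nonzero only when $x\in\Min(X)$) and over $u\in\Min(X)$ with $u<x\in\Max(X)$ in the second (nonzero only when $x\in\Max(X)$). By \cref{c(x_y)=vf(e_y)(x_y)} the first sum is $\sum_{v\in\Max(X),\,x<v}\vf(e_v)(x,v)e_{xv}$, which by \cref{vf(e_xx)(x_y)=-vf(e_yy)(x_y)} of \cref{vf(e_xy)(x_y)=const} equals $\sum_{v}\vf(e_x)(x,v)e_{xv}$; the second sum is $\sum_{u\in\Min(X),\,u<x}\vf(e_x)(u,x)e_{ux}$, so $-$(second sum)$=-\sum_u\vf(e_x)(u,x)e_{ux}=\sum_u\vf(e_u)(u,x)e_{ux}$ again by \cref{vf(e_xx)(x_y)=-vf(e_yy)(x_y)}. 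On the other hand, $\vf(e_x)_J=\sum_{u<v}\vf(e_x)(u,v)e_{uv}$, and by part \cref{vf(e_xx)(u_v)=0} of \cref{vf(e_xy)(x_y)=const} the only surviving terms are those with $\Min(X)\ni x=u<v\in\Max(X)$ or $\Min(X)\ni u<v=x\in\Max(X)$ — which are precisely the two sums just computed. Matching term by term gives $[c,e_x]=\vf(e_x)_J$.

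The only real subtlety is bookkeeping: keeping straight which of the two families of indices is nonempty (the case $x\in\Min(X)$ versus $x\in\Max(X)$, which need not be exclusive if $x$ is both minimal and maximal, i.e.\ isolated — but then $X=\{x\}$ by connectedness and everything is trivially zero), and invoking \cref{vf(e_xy)(x_y)=const}\cref{vf(e_xx)(x_y)=-vf(e_yy)(x_y)} with the correct orientation to convert $\vf(e_v)(x,v)$ into $\vf(e_x)(x,v)$ and $\vf(e_u)(u,x)$ into $-\vf(e_x)(u,x)$. I expect no genuine obstacle beyond this careful index matching, since all the needed vanishing statements are already packaged in \cref{vf(e_xy)(x_y)=const}.
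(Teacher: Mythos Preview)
Your approach is identical to the paper's---compute $[c,e_x]$ directly and match it with $\vf(e_x)_J$ using parts \cref{vf(e_xx)(u_v)=0} and \cref{vf(e_xx)(x_y)=-vf(e_yy)(x_y)} of \cref{vf(e_xy)(x_y)=const}---but your execution has sign errors that do not all cancel. Since $[e_{uv},e_x]=e_{uv}e_x-e_xe_{uv}=\dl_{v,x}e_{ux}-\dl_{u,x}e_{xv}$, one actually gets
\[
[c,e_x]=\sum_{u<x}c(u,x)e_{ux}-\sum_{x<v}c(x,v)e_{xv},
\]
the negative of what you wrote. Also, \cref{vf(e_xy)(x_y)=const}\cref{vf(e_xx)(x_y)=-vf(e_yy)(x_y)} reads $\vf(e_x)(x,v)=-\vf(e_v)(x,v)$, so $\sum_v\vf(e_v)(x,v)e_{xv}$ equals $-\sum_v\vf(e_x)(x,v)e_{xv}$, not $+\sum_v\vf(e_x)(x,v)e_{xv}$ as you claim.

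These two sign slips happen to cancel on the $e_{xv}$-terms, but not on the $e_{ux}$-terms: your expression for $[c,e_x]$ ends with $+\sum_u\vf(e_u)(u,x)e_{ux}$, whereas $\vf(e_x)_J$ contributes $+\sum_u\vf(e_x)(u,x)e_{ux}$, and by \cref{vf(e_xx)(x_y)=-vf(e_yy)(x_y)} these differ by a sign---so ``matching term by term'' fails as written. With the commutator sign corrected, the $e_{ux}$-part of $[c,e_x]$ is simply $+\sum_u c(u,x)e_{ux}=+\sum_u\vf(e_x)(u,x)e_{ux}$ straight from the definition of $c$ (no appeal to \cref{vf(e_xx)(x_y)=-vf(e_yy)(x_y)} needed for this sum), and the identity $\ad_c(e_x)=\vf(e_x)_J$ then follows exactly as in the paper.
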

\begin{proof}
	The first equality of \cref{af_c(e_xy)-and-af_c(e_x)} is obvious in view of \cref{af_c(a)=[c_a]}, because $e_{xy}\in [I(X,K),I(X,K)]$. Let $x\in X$ and $u<v$. Then
	\begin{align*}
		\ad_c(e_x)(u,v)=[c,e_x](u,v)=
		\begin{cases}
			-c(u,v), & x=u,\\
			c(u,v), & x=v,\\
			0, & x\not\in\{u,v\}.
		\end{cases}
	=
	\begin{cases}
		-\vf(e_v)(u,v), & x=u,\\
		\vf(e_x)(u,v), & x=v,\\
		0, & x\not\in\{u,v\}.
	\end{cases}
	\end{align*}
We immediately see that $\ad_c(e_x)(u,v)=\vf(e_x)(u,v)$, whenever $x=v\in\Max(X)$ and $u\in\Min(X)$. If $x=u\in\Min(X)$ and $v\in\Max(X)$, then $\ad_c(e_x)(u,v)=-\vf(e_v)(u,v)=\vf(e_x)(u,v)$ by \cref{vf(e_xy)(x_y)=const}\cref{vf(e_xx)(x_y)=-vf(e_yy)(x_y)}. In the remaining cases $\ad_c(e_x)(u,v)=0=\vf(e_x)(u,v)$ by \cref{vf(e_xy)(x_y)=const}\cref{vf(e_xx)(u_v)=0}. Thus, the second equality of \cref{af_c(e_xy)-and-af_c(e_x)} is established.
\end{proof} 

\begin{defn}
	Let $\vf:I(X,K)\to I(X,K)$ be a linear map. We say that $\vf$ is {\it diagonality preserving}, if $\vf(D(X,K))\sst D(X,K)$.
\end{defn}

\begin{rem}\label{vf-on-D(X_K)}
	Any diagonality preserving linear map $\vf:I(X,K)\to I(X,K)$ is a $\frac 12$-derivation, when restricted to $D(X,K)$.
\end{rem}

\begin{prop}\label{vf=inner+diag-pres}
	Each $\vf\in\Dl(I(X,K))$ is of the form
	\begin{align}\label{vf=ad_c+tl-vf}-
		\vf=\ad_c+\tl\vf
	\end{align}
for a unique $c\in Z([I(X,K),I(X,K)])$ and a unique diagonality preserving $\tl\vf\in\Dl(I(X,K))$.
\end{prop}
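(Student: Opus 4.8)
The plan is to produce the decomposition explicitly, starting from the element $c$ already singled out in \cref{c(x_y)=vf(e_y)(x_y)}, and then to settle uniqueness by a short computation inside $[I(X,K),I(X,K)]$. For existence, given $\vf\in\Dl(I(X,K))$ I would take $c\in Z([I(X,K),I(X,K)])$ as defined by \cref{c(x_y)=vf(e_y)(x_y)} and set $\tl\vf:=\vf-\ad_c$. Since the identity \cref{vf(xy)=half(vf(x)y+xvf(y))} is linear in $\vf$, the set $\Dl(I(X,K))$ is a $K$-subspace of the space of all linear maps $I(X,K)\to I(X,K)$; as $\ad_c$ is an inner $\frac 12$-derivation, this gives $\tl\vf\in\Dl(I(X,K))$ at once. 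To check that $\tl\vf$ is diagonality preserving it is enough, by linearity, to evaluate it on the basis $\{e_x\mid x\in X\}$ of $D(X,K)$, and here \cref{ad_c-is-halfder} does all the work: since $\ad_c(e_x)=\vf(e_x)_J$, we get $\tl\vf(e_x)=\vf(e_x)-\vf(e_x)_J=\vf(e_x)_D\in D(X,K)$. Thus $\vf=\ad_c+\tl\vf$ has the required form.

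For uniqueness, suppose $\ad_c+\tl\vf=\ad_{c'}+\tl\vf'$ with $c,c'\in Z([I(X,K),I(X,K)])$ and $\tl\vf,\tl\vf'$ diagonality preserving $\frac 12$-derivations. Writing $d:=c-c'\in Z([I(X,K),I(X,K)])$, the equality rearranges to $\ad_d=\tl\vf'-\tl\vf$, so $\ad_d$ is diagonality preserving, and it remains only to show that this forces $d=0$. For every $x\in X$ one has $\ad_d(e_x)=[d,e_x]\in[I(X,K),I(X,K)]$, and since $I(X,K)=D(X,K)\oplus[I(X,K),I(X,K)]$, the requirement $\ad_d(e_x)\in D(X,K)$ means precisely $\ad_d(e_x)=0$. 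I would then invoke the explicit basis $Z([I(X,K),I(X,K)])=\gen{e_{xy}\mid \Min(X)\ni x<y\in\Max(X)}$: evaluating $\ad_d(e_y)=de_y-e_yd$ at a pair $(x,y)$ with $\Min(X)\ni x<y\in\Max(X)$, the term $e_yd$ contributes $0$ (as $x\ne y$) while $de_y$ contributes $d(x,y)$, so $\ad_d(e_y)=0$ yields $d(x,y)=0$. Since these are the only entries of $d$ that can be nonzero, we conclude $d=0$, hence $c=c'$ and then $\tl\vf=\tl\vf'$.

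In substance the proof is a repackaging of \cref{ad_c-is-halfder}, so no new estimate is needed; if anything deserves the name of main obstacle it is the bookkeeping in the uniqueness step. That step rests on two structural facts already recorded in the excerpt --- that $[I(X,K),I(X,K)]$ is a complement of $D(X,K)$ in $I(X,K)$ and that $Z([I(X,K),I(X,K)])$ has the stated monomial basis --- and these together pin $d$ down to $0$ as soon as $\ad_d$ is known to preserve diagonality.
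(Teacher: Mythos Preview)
Your proof is correct and follows essentially the same approach as the paper: both define $c$ via \cref{c(x_y)=vf(e_y)(x_y)}, set $\tl\vf=\vf-\ad_c$, and invoke \cref{ad_c-is-halfder} to see that $\tl\vf$ is diagonality preserving. For uniqueness the paper argues in the reverse order---showing first that $\tl\vf(e_{xy})=\vf(e_{xy})$ and $\tl\vf(e_x)=\vf(e_x)_D$ are forced, and then recovering $c$ from $\vf-\tl\vf$---whereas you show $d=c-c'=0$ directly; the two arguments are equivalent.
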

\begin{proof}
	Consider $c\in Z([I(X,K),I(X,K)])$ given by \cref{c(x_y)=vf(e_y)(x_y)} and set $\tl\vf:=\vf-\ad_c$. Then $\tl\vf\in\Dl(I(X,K))$, and \cref{vf=ad_c+tl-vf} holds by definition. Moreover, $\tl\vf$ is diagonality preserving thanks to \cref{af_c(e_xy)-and-af_c(e_x)}. 
	
	For the uniqueness of $c$ and $\tl\vf$, assume the decomposition \cref{vf=ad_c+tl-vf}, where $c\in Z([I(X,K),I(X,K)])$ and $\tl\vf$ is diagonality preserving. Then
	\begin{align*}
		\tl\vf(e_{xy})=\vf(e_{xy})\text{ and }\tl\vf(e_x)=\vf(e_x)_D
	\end{align*}
for all $x<y$, because $\ad_c(e_{xy})=0$ and $\tl\vf(e_x)_J=\ad_c(e_x)_D=0$. Hence, $\tl\vf$ is unique. Since $c(x,y)=\ad_c(e_y)(x,y)=(\vf-\tl\vf)(e_y)(x,y)$, then $c$ is also unique.
\end{proof}

\subsection{Diagonality preserving $\frac 12$-derivations of the incidence algebra}
Our next goal is to describe diagonality preserving $\frac 12$-derivations of $I(X,K)$. We begin with some facts that are true for an arbitrary $\vf\in\Dl(I(X,K))$. We know from \cref{vf(e_xy)=multiple-of-e_xy} that there exists $\sg=\sg_\vf:X^2_<\to K$ such that
\begin{align}\label{vf(e_xy)=sg(x_y)e_xy}
	\vf(e_{xy})=\sg(x,y)e_{xy}\text{ for all }x<y.
\end{align}
Moreover, by \cref{vf(e_xy)(x_y)=const}\cref{vf(e_xy)(x_y)=vf(e_uv)(u_v)} the map $\sg$ is \textit{constant on chains} in $X$ in the sense that
\begin{align*}
	\sg(x,y)=\sg(u,v),\text{ if }x<y\text{ and }u<v\text{ belong to a same chain in }X.
\end{align*}

\begin{rem}\label{vf-on-[I(X_K)_I(X_K)]}
	A linear map $\vf:I(X,K)\to I(X,K)$ satisfying \cref{vf(e_xy)=sg(x_y)e_xy} for some constant on chains map $\sg:X^2_<\to K$ is a $\frac 12$-derivation, when restricted to $[I(X,K),I(X,K)]$.
\end{rem}

The following example shows that $\sg$ can take different values on distinct maximal chains of $X$.
\begin{exm}\label{exm-2-chains}
	Let $X=\{1,2,3,4,5\}$ with the following Hasse diagram.
	\begin{center}
		\begin{tikzpicture}
			\draw  (0,0)--(-1,1);
			\draw  (-1,1)--(-2,2);
			\draw  (0,0)-- (1,1);
			\draw  (1,1)-- (2,2);
			\draw [fill=black] (0,0) circle (0.05);
			\draw  (0,-0.3) node {$1$};
			\draw [fill=black] (-1,1) circle (0.05);
			\draw  (-1,0.7) node {$2$};
			\draw [fill=black] (-2,2) circle (0.05);
			\draw  (-2,1.7) node {$4$};
			\draw [fill=black] (1,1) circle (0.05);
			\draw  (1,0.7) node {$3$};
			\draw [fill=black] (2,2) circle (0.05);
			\draw  (2,1.7) node {$5$};
		\end{tikzpicture}
	\end{center}
Consider the $K$-linear map $\vf:I(X,K)\to I(X,K)$ defined as follows: 
$\vf(e_1)=e_1+e_3+e_5$,
$\vf(e_{12})=e_{12}$,
$\vf(e_{14})=e_{14}$,
$\vf(e_2)=e_2$,
$\vf(e_{24})=e_{24}$,
$\vf(e_4)=e_4$,
$\vf(e_{13})=\vf(e_{15})=\vf(e_3)=\vf(e_{35})=\vf(e_5)=0$.
Then $\vf\in\Dl(I(X,K))$ and on $[I(X,K),I(X,K)]$ it acts by \cref{vf(e_xy)=sg(x_y)e_xy}, where $\sg(1,2)=\sg(1,4)=\sg(2,4)=1$ and $\sg(1,3)=\sg(1,5)=\sg(3,5)=0$. It is also diagonality preserving. 
\end{exm}

We are going to find one more condition that $\sg$ must satisfy. To this end, summarize the results of \cref{vf(e_xy)=multiple-of-e_xy} and \cref{vf(e_xy)(x_y)=const}\cref{vf(e_xx)(u_u)=vf(e_xx)(v_v)} into the following
\begin{cor}
	Let $\vf\in\Dl(I(X,K))$. Then for all $x\in X$ and $u<v$ in $X$:
	\begin{align}\label{vf(e_x)(u_u)-vf(e_x)(v_v)-in-terms-of-sg}
		\vf(e_x)(u,u)-\vf(e_x)(v,v)=
		\begin{cases}
			\sg(x,v), & x=u,\\
			-\sg(u,x), & x=v,\\
			0, & x\not\in\{u,v\}.
		\end{cases}
	\end{align}
\end{cor}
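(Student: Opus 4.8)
The plan is to split along the three cases that appear on the right-hand side of \eqref{vf(e_x)(u_u)-vf(e_x)(v_v)-in-terms-of-sg}, namely $x\notin\{u,v\}$, $x=u$, and $x=v$, and in each case to read off the value of $\vf(e_x)(u,u)-\vf(e_x)(v,v)$ directly from the two results cited just above the statement together with the defining relation \eqref{vf(e_xy)=sg(x_y)e_xy}. No new computation is needed; the corollary is purely a repackaging.

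First I would dispose of the case $x\notin\{u,v\}$: since $u<v$ we have in particular $u\le v$, so \cref{vf(e_xy)(x_y)=const}\cref{vf(e_xx)(u_u)=vf(e_xx)(v_v)} applies verbatim and gives $\vf(e_x)(u,u)=\vf(e_x)(v,v)$, i.e.\ the difference is $0$. This covers every position of $x$ relative to $u$ and $v$ other than $x=u$ or $x=v$ — whether or not $x$ is comparable to either of them — so no further subdivision inside this case is required.

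Next, for $x=u$ I would apply the first equality of \cref{vf(e_xy)=multiple-of-e_xy} to the pair $x<v$ (which is legitimate since $x=u<v$), obtaining $\vf(e_{xv})=(\vf(e_x)(x,x)-\vf(e_x)(v,v))e_{xv}$; comparing with \eqref{vf(e_xy)=sg(x_y)e_xy} yields $\vf(e_x)(u,u)-\vf(e_x)(v,v)=\vf(e_x)(x,x)-\vf(e_x)(v,v)=\sg(x,v)$. Symmetrically, for $x=v$ I would apply the \emph{second} equality of \cref{vf(e_xy)=multiple-of-e_xy} to the pair $u<x$, getting $\vf(e_{ux})=(\vf(e_x)(x,x)-\vf(e_x)(u,u))e_{ux}=\sg(u,x)e_{ux}$, hence $\vf(e_x)(u,u)-\vf(e_x)(v,v)=\vf(e_x)(u,u)-\vf(e_x)(x,x)=-\sg(u,x)$.

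There is essentially no obstacle here: the only point requiring attention is choosing, in the cases $x=u$ and $x=v$, the appropriate one of the two equalities in \cref{vf(e_xy)=multiple-of-e_xy} — the one written in terms of $\vf(e_x)$ rather than in terms of $\vf(e_{\text{other endpoint}})$ — and checking that the pair being used lies in $X^2_<$ so that both that lemma and \eqref{vf(e_xy)=sg(x_y)e_xy} may be invoked.
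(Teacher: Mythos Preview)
Your proof is correct and follows exactly the route the paper indicates: the corollary is presented there merely as a summary of \cref{vf(e_xy)=multiple-of-e_xy} and \cref{vf(e_xy)(x_y)=const}\cref{vf(e_xx)(u_u)=vf(e_xx)(v_v)}, and you have spelled out precisely that unpacking --- using the first equality of \cref{vf(e_xy)=multiple-of-e_xy} for $x=u$, the second for $x=v$, and \cref{vf(e_xy)(x_y)=const}\cref{vf(e_xx)(u_u)=vf(e_xx)(v_v)} for $x\notin\{u,v\}$, together with the definition \eqref{vf(e_xy)=sg(x_y)e_xy} of $\sg$.
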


\begin{defn}
	Let $\sg:X^2_<\to K$ be a map and $\G: u_0,u_1,\dots,u_m$ a walk in $X$. Define the following $4$ functions $s^\pm_{\sg,\G},t^\pm_{\sg,\G}:X\to K$:
	\begin{align}
		s^+_{\sg,\G}(x)&=\sum_{x=u_i<u_{i+1}} \sg(x,u_{i+1}),\ 
		s^-_{\sg,\G}(x)=\sum_{u_i>u_{i+1}=x} \sg(x,u_i), \label{s^+-and-s^-}\\ 
		t^+_{\sg,\G}(x)&=\sum_{x=u_i>u_{i+1}} \sg(u_{i+1},x), \ 
		t^-_{\sg,\G}(x)=\sum_{u_i<u_{i+1}=x} \sg(u_i,x),\label{t^+-and-t^-}
	\end{align}
where in all the sums above $0\le i\le m-1$.
\end{defn}

\begin{defn}
	Let $\G:u_0,u_1,\dots,u_m$ and $\G':u'_0,u'_1,\dots,u'_{m'}$ be two walks in $X$. We say that $\G$ and $\G'$ are \textit{composable}, if $u_m=u'_0$, in which case their \textit{composition} is defined to be the walk $\G*\G':u_0,u_1,\dots,u_m=u'_0,u'_1,\dots,u'_{m'}$. The \textit{inverse} of $\G$ is the walk $\G\m:u_m,u_{m-1},\dots,u_0$.
\end{defn}

\begin{lem}\label{properties-of-s-and-t}
	Let $\sg:X^2_<\to K$ be a map and $\G$, $\G'$ two composable walks in $X$.
	\begin{enumerate}
		\item\label{s^+_sg_G*G'=s^+_sg_G+s^+_sg_G'} $s^\pm_{\sg,\G*\G'}=s^\pm_{\sg,\G}+s^\pm_{\sg,\G'}$ and $t^\pm_{\sg,\G*\G'}=t^\pm_{\sg,\G}+t^\pm_{\sg,\G'}$;
		\item\label{s^+_sg_G^(-1)=-s^+_sg_G} $s^\pm_{\sg,\G\m}=s^\mp_{\sg,\G}$ and $t^\pm_{\sg,\G\m}=t^\mp_{\sg,\G}$.
	\end{enumerate}
\end{lem}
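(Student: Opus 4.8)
The plan is to observe that, by \cref{s^+-and-s^-,t^+-and-t^-}, each of the four functions $s^\pm_{\sg,\G}$ and $t^\pm_{\sg,\G}$ is a sum of contributions indexed by the edges $(u_i,u_{i+1})$, $0\le i\le m-1$, of the walk $\G$, where the contribution of the $i$-th edge depends only on that edge: on the unordered pair $\{u_i,u_{i+1}\}$, on which of $u_i,u_{i+1}$ is the smaller one, and on whether the argument $x$ equals $u_i$ or $u_{i+1}$. I would record this reformulation once and then apply it to each of the four functions.

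Granting it, part~\cref{s^+_sg_G*G'=s^+_sg_G+s^+_sg_G'} is immediate: since $\G*\G'$ has $u_m=u'_0$ and its edge sequence is the concatenation of the edge sequences of $\G$ and of $\G'$ (with no edge counted twice, the last edge of $\G$ and the first edge of $\G'$ being distinct members of the list), every edge-indexed sum over $\G*\G'$ splits as the corresponding sum over the edges of $\G$ plus that over the edges of $\G'$. Applying this to each of $s^+,s^-,t^+,t^-$ yields the four additivity identities at once.

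For part~\cref{s^+_sg_G^(-1)=-s^+_sg_G}, I would note that the edges of $\G\m:u_m,u_{m-1},\dots,u_0$ are exactly the edges of $\G$ with their two endpoints transposed: the $i$-th edge $(u_i,u_{i+1})$ of $\G$ becomes the $(m-1-i)$-th edge $(u_{i+1},u_i)$ of $\G\m$. Under this transposition an ``ascending'' edge of $\G$ becomes a ``descending'' edge of $\G\m$ and vice versa, while the unordered endpoint pair and the attached value of $\sg$ are unchanged. Comparing the defining conditions in \cref{s^+-and-s^-,t^+-and-t^-} one checks directly that the contribution of $(u_i,u_{i+1})$ to $s^+_{\sg,\G\m}(x)$ is exactly its contribution to $s^-_{\sg,\G}(x)$ (both equal to $\sg(x,u_i)$ under the condition $u_i>u_{i+1}=x$), and likewise that $t^+_{\sg,\G\m}(x)$ matches $t^-_{\sg,\G}(x)$ edge by edge; hence $s^+_{\sg,\G\m}=s^-_{\sg,\G}$ and $t^+_{\sg,\G\m}=t^-_{\sg,\G}$. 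Replacing $\G$ by $\G\m$ and using $(\G\m)\m=\G$ gives the remaining two identities, so $s^\pm_{\sg,\G\m}=s^\mp_{\sg,\G}$ and $t^\pm_{\sg,\G\m}=t^\mp_{\sg,\G}$.

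The only real obstacle is the index bookkeeping in part~\cref{s^+_sg_G^(-1)=-s^+_sg_G}: one must be sure that the four defining conditions of \cref{s^+-and-s^-,t^+-and-t^-} are correctly interchanged under the endpoint swap and that the argument of $\sg$ lands on the right pair in the right order, bearing in mind that $\sg$ is only defined on $X^2_<$. I expect to carry out one of the four matchings explicitly and remark that the other three are entirely analogous; no genuine difficulty beyond careful case-checking is anticipated.
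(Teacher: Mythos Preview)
Your proposal is correct and follows essentially the same approach as the paper: both arguments reduce to the observation that each of $s^\pm$, $t^\pm$ is a sum of per-edge contributions, so additivity under composition and the sign-swap under inversion are immediate index bookkeeping. The only minor difference is that the paper handles all four identities in \cref{s^+_sg_G^(-1)=-s^+_sg_G} by the same direct index substitution, whereas you verify two of them and then invoke $(\G\m)\m=\G$ to deduce the other two; both are equally valid and equally short.
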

\begin{proof}
	\textit{\cref{s^+_sg_G*G'=s^+_sg_G+s^+_sg_G'}.}
	Let $\G:u_0,u_1,\dots,u_m$, $\G':u'_0,u'_1,\dots,u'_{m'}$ and $\G*\G':u_0,u_1,\dots,u_{m+m'}$, where $u_{m+i}=u'_i$, $0\le i\le m'$. Then for all $x\in X$
	\begin{align*}
		&\{0\le i\le m+m'-1\mid x=u_i<u_{i+1}\}\\
		&\quad=\{0\le i\le m-1\mid x=u_i<u_{i+1}\}\sqcup\{m\le i\le m+m'-1\mid x=u_i<u_{i+1}\}\\
		&\quad=\{0\le i\le m-1\mid x=u_i<u_{i+1}\}\sqcup\{0\le i\le m'-1\mid x=u_{m+i}<u_{m+i+1}\}\\
		&\quad=\{0\le i\le m-1\mid x=u_i<u_{i+1}\}\sqcup\{0\le i\le m'-1\mid x=u'_i<u'_{i+1}\},
	\end{align*} 
showing that $s^+_{\sg,\G*\G'}(x)=s^+_{\sg,\G}(x)+s^+_{\sg,\G'}(x)$. The remaining $3$ equalities are proved analogously,

\textit{\cref{s^+_sg_G^(-1)=-s^+_sg_G}.} If $\G:u_0,u_1,\dots,u_m$, then $\G\m:u'_0,u'_1,\dots,u'_m$ with $u'_i=u_{m-i}$ for all $0\le i\le m$. Denote $i'=m-i-1$. Then
\begin{align*}
	\{0\le i\le m-1\mid x=u'_i<u'_{i+1}\}&=\{0\le i\le m-1\mid x=u_{m-i}<u_{m-i-1}\}\\
	&=\{0\le i'\le m-1\mid x=u_{i'+1}<u_{i'}\}
\end{align*}
and $\sg(x,u'_{i+1})=\sg(x,u_{i'})$, whence $s^+_{\sg,\G\m}(x)=s^-_{\sg,\G}(x)$. The remaining $3$ equalities are proved using the same argument.
\end{proof}

\begin{lem}\label{vf(e_x)(u_m_u_m)-in-terms-of-vf(e_x)(u_0_u_0)}
	Let $\vf\in\Dl(I(X,K))$, $x\in X$ and $\G:u_0,u_1,\dots,u_m$ a walk in $X$. Then
	\begin{align}\label{vf(e_x)(u_m_u_m)=vf(e_x)(u_0_u_0)+s-and-t}
		\vf(e_x)(u_m,u_m)=\vf(e_x)(u_0,u_0)-s^+_{\sg,\G}(x)+s^-_{\sg,\G}(x)-t^+_{\sg,\G}(x)+t^-_{\sg,\G}(x).
	\end{align}
\end{lem}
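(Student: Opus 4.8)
The plan is to argue by induction on the length $m$ of the walk $\G$. The base case $m=0$ is immediate: the walk then consists of the single vertex $u_0=u_m$ and has no edges, so each of the four functions $s^\pm_{\sg,\G}$, $t^\pm_{\sg,\G}$ is identically zero and \cref{vf(e_x)(u_m_u_m)=vf(e_x)(u_0_u_0)+s-and-t} collapses to the trivial identity $\vf(e_x)(u_0,u_0)=\vf(e_x)(u_0,u_0)$.

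For the inductive step I would split the walk as $\G=\G'*\G''$, where $\G':u_0,u_1,\dots,u_{m-1}$ carries the first $m-1$ edges of $\G$ and $\G'':u_{m-1},u_m$ is the walk consisting of the last edge. These walks are composable, so \cref{properties-of-s-and-t}\cref{s^+_sg_G*G'=s^+_sg_G+s^+_sg_G'} gives $s^\pm_{\sg,\G}=s^\pm_{\sg,\G'}+s^\pm_{\sg,\G''}$ and $t^\pm_{\sg,\G}=t^\pm_{\sg,\G'}+t^\pm_{\sg,\G''}$. Applying the induction hypothesis to $\G'$, the claim for $\G$ reduces to the single-edge identity
\begin{align*}
	\vf(e_x)(u_m,u_m)=\vf(e_x)(u_{m-1},u_{m-1})-s^+_{\sg,\G''}(x)+s^-_{\sg,\G''}(x)-t^+_{\sg,\G''}(x)+t^-_{\sg,\G''}(x).
\end{align*}

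The heart of the argument is this single-edge computation, which breaks into the two cases $u_{m-1}<u_m$ and $u_m<u_{m-1}$. In each case, at a fixed $x$ only one of the four sums defining $s^\pm_{\sg,\G''}$, $t^\pm_{\sg,\G''}$ can be nonzero, and a direct inspection of \cref{s^+-and-s^-,t^+-and-t^-} shows that the combination $-s^+_{\sg,\G''}(x)+s^-_{\sg,\G''}(x)-t^+_{\sg,\G''}(x)+t^-_{\sg,\G''}(x)$ equals the value of $\vf(e_x)(u_m,u_m)-\vf(e_x)(u_{m-1},u_{m-1})$ supplied by \cref{vf(e_x)(u_u)-vf(e_x)(v_v)-in-terms-of-sg} (applied with the smaller of $u_{m-1},u_m$ in the role of ``$u$'' and the larger in the role of ``$v$''): both equal $\pm\sg$ of the edge when $x$ is one of its endpoints and $0$ otherwise. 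Since nothing here is deep, the only point requiring care is the sign/direction bookkeeping in the single-edge case — deciding, in each of the two orientations, which of $s^\pm$, $t^\pm$ is the ``active'' sum and that its sign matches the corresponding branch of \cref{vf(e_x)(u_u)-vf(e_x)(v_v)-in-terms-of-sg}. (Equivalently, one could bypass the induction altogether by telescoping $\vf(e_x)(u_m,u_m)-\vf(e_x)(u_0,u_0)=\sum_{i=0}^{m-1}\bigl(\vf(e_x)(u_{i+1},u_{i+1})-\vf(e_x)(u_i,u_i)\bigr)$ and applying \cref{vf(e_x)(u_u)-vf(e_x)(v_v)-in-terms-of-sg} to each summand, but the inductive presentation via \cref{properties-of-s-and-t} is cleaner.)
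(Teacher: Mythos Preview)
Your proposal is correct and amounts to the same argument as the paper's. The paper actually takes the telescoping route you mention parenthetically at the end: it writes $\vf(e_x)(u_m,u_m)-\vf(e_x)(u_0,u_0)=\sum_{i=0}^{m-1}\Phi_{\G,i}(x)$ with $\Phi_{\G,i}(x)=\vf(e_x)(u_{i+1},u_{i+1})-\vf(e_x)(u_i,u_i)$, evaluates each $\Phi_{\G,i}(x)$ case-by-case via \cref{vf(e_x)(u_u)-vf(e_x)(v_v)-in-terms-of-sg}, and then recognizes the resulting sum as $-s^+_{\sg,\G}(x)+s^-_{\sg,\G}(x)-t^+_{\sg,\G}(x)+t^-_{\sg,\G}(x)$ directly from the definitions \cref{s^+-and-s^-,t^+-and-t^-}. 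Your inductive packaging via \cref{properties-of-s-and-t}\cref{s^+_sg_G*G'=s^+_sg_G+s^+_sg_G'} is an equivalent way of organizing the same single-edge bookkeeping.
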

\begin{proof}
	Obviously,
	\begin{align}\label{vf(e_x)(u_m_u_m)=vf(e_x)(u_0_u_0)+sum-Phi}
		\vf(e_x)(u_m,u_m)=\vf(e_x)(u_0,u_0)+\sum_{i=0}^{m-1}\Phi_{\G,i}(x),
	\end{align}
	where $\Phi_{\G,i}(x)=\vf(e_x)(u_{i+1},u_{i+1})-\vf(e_x)(u_i,u_i)$ for all $0\le i\le m-1$. However, by \cref{vf(e_x)(u_u)-vf(e_x)(v_v)-in-terms-of-sg} we have
	\begin{align}\label{vf(e_x)(u_(i+1)_u_(i+1))-vf(e_x)(u_i_u_i)}
		\Phi_{\G,i}(x)=
		\begin{cases}
			-\sg(x,u_{i+1}), & x=u_i<u_{i+1},\\
			\sg(u_i,x), & u_i<u_{i+1}=x,\\
			-\sg(u_{i+1},x), & x=u_i>u_{i+1},\\
			\sg(x,u_i), & u_i>u_{i+1}=x,\\
			0, & x\not\in\{u_i,u_{i+1}\}.
		\end{cases}
	\end{align}
	Thus, \cref{vf(e_x)(u_m_u_m)=vf(e_x)(u_0_u_0)+s-and-t} follows by \cref{s^+-and-s^-,t^+-and-t^-,vf(e_x)(u_m_u_m)=vf(e_x)(u_0_u_0)+sum-Phi,vf(e_x)(u_(i+1)_u_(i+1))-vf(e_x)(u_i_u_i)}.
\end{proof}

\begin{cor}\label{uniqueness-of-vf_sg_k}
	Fix an arbitrary $u_0\in X$. Given two diagonality preserving $\vf,\psi\in\Dl(I(X,K))$ and $x\in X$,
	if $\vf(e_x)(u_0,u_0)=\psi(e_x)(u_0,u_0)$ and $\sg_\vf=\sg_\psi$, then $\vf(e_x)=\psi(e_x)$.
\end{cor}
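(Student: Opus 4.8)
The plan is to reduce the assertion to an equality of diagonal coefficients, and then to propagate the single known equality at $u_0$ to all of $X$ by walking through the poset. Since $\vf$ and $\psi$ are diagonality preserving, both $\vf(e_x)$ and $\psi(e_x)$ lie in $D(X,K)$, hence each is determined by its diagonal entries: $\vf(e_x)=\sum_{u\in X}\vf(e_x)(u,u)e_u$ and likewise for $\psi$. So it suffices to prove that $\vf(e_x)(u,u)=\psi(e_x)(u,u)$ for every $u\in X$.

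Next, fix $u\in X$. Because $X$ is connected, there is a walk (indeed a path) $\G:u_0,u_1,\dots,u_m=u$ from $u_0$ to $u$. I would then apply \cref{vf(e_x)(u_m_u_m)-in-terms-of-vf(e_x)(u_0_u_0)} twice along $\G$: once to $\vf$ (with $\sg=\sg_\vf$) and once to $\psi$ (with $\sg=\sg_\psi$). This yields
\begin{align*}
	\vf(e_x)(u,u)&=\vf(e_x)(u_0,u_0)-s^+_{\sg_\vf,\G}(x)+s^-_{\sg_\vf,\G}(x)-t^+_{\sg_\vf,\G}(x)+t^-_{\sg_\vf,\G}(x),\\
	\psi(e_x)(u,u)&=\psi(e_x)(u_0,u_0)-s^+_{\sg_\psi,\G}(x)+s^-_{\sg_\psi,\G}(x)-t^+_{\sg_\psi,\G}(x)+t^-_{\sg_\psi,\G}(x).
\end{align*}
The four functions $s^\pm_{\sg,\G}$, $t^\pm_{\sg,\G}$ depend only on $\sg$ and $\G$, not on the $\frac 12$-derivation itself. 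Since $\sg_\vf=\sg_\psi$ and $\vf(e_x)(u_0,u_0)=\psi(e_x)(u_0,u_0)$ by hypothesis, the two right-hand sides coincide, so $\vf(e_x)(u,u)=\psi(e_x)(u,u)$. As $u$ was arbitrary, $\vf(e_x)=\psi(e_x)$.

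There is no real obstacle here: all the substantive work is carried out in the preceding lemmas. The only minor points to keep in mind are that connectedness is needed to guarantee a walk between arbitrary elements (not merely between comparable ones), and that one must observe explicitly that $s^\pm_{\sg,\G}$ and $t^\pm_{\sg,\G}$ are intrinsic to the pair $(\sg,\G)$ so that equality of the $\sg$'s forces equality of these correction terms.
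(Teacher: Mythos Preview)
Your proof is correct and follows essentially the same approach as the paper: reduce to diagonal entries via diagonality preservation, pick a walk from $u_0$ to an arbitrary vertex using connectedness of $X$, and invoke \cref{vf(e_x)(u_m_u_m)-in-terms-of-vf(e_x)(u_0_u_0)} for both $\vf$ and $\psi$ along the same walk so that the correction terms (depending only on $\sg$ and $\G$) cancel. The paper's version is slightly terser, citing the intermediate formula \cref{vf(e_x)(u_m_u_m)=vf(e_x)(u_0_u_0)+sum-Phi} rather than the expanded form with $s^\pm$ and $t^\pm$, but the argument is the same.
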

\begin{proof}
	Let $v\in X$ and choose a walk $\G: u_0,u_1,\dots,u_m=v$ from $u_0$ to $v$. Then $\vf(e_x)(v,v)$ is given by the right-hand side of \cref{vf(e_x)(u_m_u_m)=vf(e_x)(u_0_u_0)+sum-Phi}, which is completely determined by $\vf(e_x)(u_0,u_0)$ and $\sg$. Choosing the same $\G$ for $\psi$, we have $\vf(e_x)(v,v)=\psi(e_x)(v,v)$. Thus, $\vf(e_x)=\psi(e_x)$, because $\vf(e_x),\psi(e_x)\in D(X,K)$.
\end{proof}

\begin{cor}\label{admissibility-of-sg}
	Let $\vf\in\Dl(I(X,K))$ and $\sg:X^2_<\to K$ the associated map given by \cref{vf(e_xy)=sg(x_y)e_xy}. Then for any closed walk $\G$ in $X$:
	\begin{align}\label{s^+-s^-+t^+-t^-=0}
		s^+_{\sg,\G}-s^-_{\sg,\G}+t^+_{\sg,\G}-t^-_{\sg,\G}=0.
	\end{align}
\end{cor}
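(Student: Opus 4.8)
The statement is an immediate consequence of \cref{vf(e_x)(u_m_u_m)-in-terms-of-vf(e_x)(u_0_u_0)}. Fix $x\in X$ and apply that lemma to the given closed walk $\G:u_0,u_1,\dots,u_m$ (here ``closed'' means precisely $u_0=u_m$). Since $\vf(e_x)$ is a well-defined element of $D(X,K)$, its diagonal entry at the vertex $u_0=u_m$ is unambiguous, so $\vf(e_x)(u_m,u_m)=\vf(e_x)(u_0,u_0)$. Substituting this into \cref{vf(e_x)(u_m_u_m)=vf(e_x)(u_0_u_0)+s-and-t} cancels the two copies of $\vf(e_x)(u_0,u_0)$ and leaves
\[
0=-s^+_{\sg,\G}(x)+s^-_{\sg,\G}(x)-t^+_{\sg,\G}(x)+t^-_{\sg,\G}(x),
\]
which is exactly \cref{s^+-s^-+t^+-t^-=0} evaluated at $x$. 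As $x\in X$ was arbitrary, the identity of functions $s^+_{\sg,\G}-s^-_{\sg,\G}+t^+_{\sg,\G}-t^-_{\sg,\G}=0$ follows.

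There is essentially no obstacle here: the work was already done in proving \cref{vf(e_x)(u_m_u_m)-in-terms-of-vf(e_x)(u_0_u_0)}, and the corollary just exploits the fact that the left-hand side of \cref{vf(e_x)(u_m_u_m)=vf(e_x)(u_0_u_0)+sum-Phi} is a single-valued function of the endpoint. The only point worth a sentence is to note that $\sg=\sg_\vf$ is indeed the map from \cref{vf(e_xy)=sg(x_y)e_xy} attached to $\vf$, so that the $s^\pm_{\sg,\G},t^\pm_{\sg,\G}$ appearing in the corollary are the same quantities that enter \cref{vf(e_x)(u_m_u_m)=vf(e_x)(u_0_u_0)+s-and-t}; this is guaranteed by \cref{vf(e_xy)=multiple-of-e_xy}. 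One may also remark that, by \cref{properties-of-s-and-t}\cref{s^+_sg_G*G'=s^+_sg_G+s^+_sg_G'}, it suffices to verify \cref{s^+-s^-+t^+-t^-=0} for a generating set of closed walks based at a fixed vertex (e.g.\ the cycles and the back-and-forth walks along single edges), although this refinement is not needed for the proof itself.
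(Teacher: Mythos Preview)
Your proof is correct and follows exactly the paper's approach: the paper's proof is the single sentence that \cref{s^+-s^-+t^+-t^-=0} follows from \cref{vf(e_x)(u_m_u_m)=vf(e_x)(u_0_u_0)+s-and-t} because $u_m=u_0$ in a closed walk, and you have simply spelled this out. One small slip: $\vf(e_x)$ need not lie in $D(X,K)$ for a general $\vf\in\Dl(I(X,K))$, but this is irrelevant since you only use that the diagonal entry $\vf(e_x)(u_0,u_0)$ is well-defined as a coefficient of $\vf(e_x)\in I(X,K)$.
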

\begin{proof}
	This follows from \cref{vf(e_x)(u_m_u_m)=vf(e_x)(u_0_u_0)+s-and-t}, because in a closed walk $\G:u_0,u_1,\dots,u_m$ one has $u_m=u_0$.
\end{proof}

\begin{defn}\label{defn-admissible}
	Let $\sg:X^2_<\to K$ be a map. We say that $\sg$ is \textit{admissible} if \cref{s^+-s^-+t^+-t^-=0} holds for any closed walk $\G$ in $X$. 
\end{defn}

Although in our proofs we will need the general \cref{defn-admissible}, in practice it is convenient to check \cref{s^+-s^-+t^+-t^-=0} on cycles. The following lemma is an analog of \cite[Lemma 5.13]{FKS}.

\begin{lem}
	A map $\sg:X^2_<\to K$ is admissible if and only if \cref{s^+-s^-+t^+-t^-=0} holds for any cycle $\G$ in $X$.
\end{lem}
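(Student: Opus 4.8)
The plan is to prove the nontrivial direction: assuming \cref{s^+-s^-+t^+-t^-=0} holds for every cycle in $X$, deduce that it holds for every closed walk. The reverse implication is trivial, since every cycle is a closed walk. The key observation is that the quantity
\[
\Psi_\G(x):=s^+_{\sg,\G}(x)-s^-_{\sg,\G}(x)+t^+_{\sg,\G}(x)-t^-_{\sg,\G}(x)
\]
is additive under composition of walks and antisymmetric under inversion, by \cref{properties-of-s-and-t}\cref{s^+_sg_G*G'=s^+_sg_G+s^+_sg_G',s^+_sg_G^(-1)=-s^+_sg_G}: indeed $\Psi_{\G*\G'}=\Psi_\G+\Psi_{\G'}$ and $\Psi_{\G\m}=-\Psi_\G$. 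Thus admissibility on a family of closed walks is preserved under concatenation, under taking inverses, and under inserting a back-and-forth detour $\G\m*\G$ (which contributes $0$). So it suffices to show every closed walk can be decomposed, up to such operations, into cycles and trivial pieces.

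The main step is therefore a combinatorial reduction: \emph{every closed walk in $X$ is, modulo the moves above, a concatenation of cycles}. I would proceed by induction on the length $m$ of a closed walk $\G:u_0,u_1,\dots,u_m=u_0$. First handle degenerate phenomena: if $\G$ contains an immediate backtrack $u_{i-1},u_i,u_{i+1}$ with $u_{i-1}=u_{i+1}$, excise the two edges; this strictly shortens $\G$ and changes $\Psi_\G$ by $0$ (the excised piece is $\G'\m*\G'$ for the one-edge walk $\G':u_{i-1},u_i$). Next, if some vertex other than $u_0$ repeats, say $u_i=u_j$ with $0\le i<j\le m$ and $(i,j)\ne(0,m)$, split $\G$ at these two occurrences into the shorter closed walks $u_0,\dots,u_i,u_{j+1},\dots,u_m$ and $u_i,u_{i+1},\dots,u_j$, whose $\Psi$'s sum to $\Psi_\G$; apply induction to each. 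After these reductions we may assume $\G$ is a closed walk with no repeated vertices except the endpoints and no immediate backtracks — so $\G$ is a cycle, unless it is too short to be a cycle (length $m\le 3$). The remaining small cases are closed walks of length $2$ or $3$ with no backtrack and distinct interior vertices: length $2$ with no backtrack forces $u_0=u_1$, impossible for an edge; length $3$ with distinct $u_0,u_1,u_2$ would require a $3$-cycle $u_0,u_1,u_2,u_0$, but in a poset three pairwise "edges" ($l=1$ relations) cannot close up — two of the three comparabilities would force a length-$2$ chain among the three elements, contradicting the third edge. Hence the base cases contribute nothing, and the induction closes.

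The hard part is packaging this cleanly — in particular being careful that the "split at a repeated vertex" move genuinely decreases a well-chosen complexity measure (say the pair $(m,\#\{\text{repetitions}\})$ lexicographically) so the induction terminates, and double-checking the short-length base cases against the poset axioms (no $3$-cycles, no "edge from a vertex to itself"). Once the reduction is in place, \cref{s^+-s^-+t^+-t^-=0} for an arbitrary closed walk follows from its validity on each cycle in the decomposition together with the additivity of $\Psi$, completing the proof.
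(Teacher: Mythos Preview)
Your proposal is correct and follows essentially the same approach as the paper: both split a closed walk at a repeated vertex into two shorter closed walks and use the additivity of $s^\pm_{\sg,\G}$, $t^\pm_{\sg,\G}$ under composition (\cref{properties-of-s-and-t}) to reduce to cycles and the trivial short cases. The only cosmetic differences are that you induct on the length $m$ rather than on the number of repetitions, and you dispose of the length-$2$ backtrack case via the antisymmetry $\Psi_{\G\m}=-\Psi_\G$ instead of the paper's direct computation of $s^\pm,t^\pm$; your hedging about needing a lexicographic measure is unnecessary, since both of your moves already strictly decrease $m$.
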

\begin{proof}
	We only need to prove the ``if'' part. The proof is similar to that of \cite[Lemma 5.13]{FKS}. Assume that \cref{s^+-s^-+t^+-t^-=0} holds for any cycle $\G$ in $X$. Let now $\G:u_0,u_1,\dots,u_m=u_0$ be a closed walk in $X$. As in the proof of \cite[Lemma 5.13]{FKS} we call a pair of indices $(i,j)$, $i<j-1$, a \textit{repetition} if $u_i = u_j$.  The proof will be by induction on the number of repetitions in $\G$. 
	
	\textit{The base case.} If $\G$ has only one repetition $(0,m)$ with $m \ge 4$, then $\G$ is a cycle, so \cref{s^+-s^-+t^+-t^-=0} holds by assumption. The case $m=3$ is impossible, because there are no cycles of $3$ vertices. If $m=2$, then $\G$ is of the form $u<v>u$ or $u>v<u$. Consider the case $\G:u<v>u$. If $x=u$, then $s^{\pm}_{\sg,\G}(x)=\sg(x,v)$ and $t^{\pm}_{\sg,\G}(x)=0$; and if $x=v$, then $s^{\pm}_{\sg,\G}(x)=0$ and $t^{\pm}_{\sg,\G}(x)=\sg(u,x)$; otherwise $s^{\pm}_{\sg,\G}(x)=t^{\pm}_{\sg,\G}(x)=0$. The case $\G:u>v<u$ is treated analogously.
	
	\textit{The induction step.} Take $\G$ with a repetition $(k,l)\ne(0,m)$, $i<j-1$. If $k>0$ and $l<m$, then consider $\G':u_0,\dots,u_k,u_{l+1},\dots,u_m$ and $\G'':u_k,\dots,u_l$. These are closed walks whose numbers of repetitions are strictly less than that of $\G$. Hence, by induction hypothesis, $\G'$ and $\G''$ satisfy \cref{s^+-s^-+t^+-t^-=0}. Observe that $\G'=\G'_1*\G'_2$ and $\G=\G'_1*\G''*\G'_2$, where $\G'_1:u_0,\dots,u_k$ and $\G'_2:u_l,\dots,u_m$. Therefore, $s^\pm_{\sg,\G}=s^\pm_{\sg,\G'_1}+s^\pm_{\sg,\G''}+s^\pm_{\sg,\G'_2}=s^\pm_{\sg,\G'}+s^\pm_{\sg,\G''}$ by \cref{properties-of-s-and-t}\cref{s^+_sg_G*G'=s^+_sg_G+s^+_sg_G'}. The same way one proves that $t^{\pm}_{\sg,\G}=t^{\pm}_{\sg,\G'}+t^{\pm}_{\sg,\G''}$. Thus, \cref{s^+-s^-+t^+-t^-=0} for $\G$ follows from \cref{s^+-s^-+t^+-t^-=0} for $\G'$ and $\G''$. If $k=0$, then $l<m-1$ and $\G=\G'*\G''$ with $\G':u_0,\dots,u_l$ and $\G'':u_l,\dots,u_m$. The case $l=m$ and $k>1$ is analogous.
\end{proof}

\begin{defn}
	Let $\G:u_0,u_1,\dots,u_m=u_0$ be a cycle in $X$. We say that a map $\sg:X^2_<\to K$ is \textit{constant on $\G$}, if there exists $k\in K$ such that for all $0\le i\le m-1$ we have $\sg(u_i,u_{i+1})=k$, whenever $u_i<u_{i+1}$, and $\sg(u_{i+1},u_i)=k$, whenever $u_i>u_{i+1}$.
	If $\sg$ is constant on $\G$ for any cycle $\G$ in $X$, we say that $\sg$ is \textit{constant on cycles} in $X$.
\end{defn}

\begin{lem}\label{sg-admissible<=>constant-on-cycles}
	A map $\sg:X^2_<\to K$ is admissible if and only if $\sg$ is constant on cycles in $X$.
\end{lem}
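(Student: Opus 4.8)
The plan is to reduce, using the preceding lemma, to checking the equivalence one cycle at a time. Since $\sg$ being constant on cycles means by definition that it is constant on every cycle $\G$ in $X$, and since the preceding lemma states that $\sg$ is admissible if and only if \cref{s^+-s^-+t^+-t^-=0} holds for every cycle $\G$, it suffices to fix a single cycle $\G:u_0,u_1,\dots,u_m=u_0$ and show that the function $F_\G:=s^+_{\sg,\G}-s^-_{\sg,\G}+t^+_{\sg,\G}-t^-_{\sg,\G}$ vanishes identically on $X$ if and only if $\sg$ is constant on $\G$.

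To establish this per-cycle equivalence, I would first determine the contribution of a single edge of $\G$ to $F_\G$. For $0\le i\le m-1$ let $w_i$ denote the value of $\sg$ on the edge $\{u_i,u_{i+1}\}$ of $\G$, i.e.\ $w_i=\sg(u_i,u_{i+1})$ if $u_i<u_{i+1}$ and $w_i=\sg(u_{i+1},u_i)$ if $u_i>u_{i+1}$. Inspecting the defining sums \cref{s^+-and-s^-,t^+-and-t^-}, one sees by a two-case check that when $u_i<u_{i+1}$ the $i$-th edge enters only $s^+_{\sg,\G}(u_i)$ and $t^-_{\sg,\G}(u_{i+1})$ (with summand $w_i$ in each), and when $u_i>u_{i+1}$ it enters only $t^+_{\sg,\G}(u_i)$ and $s^-_{\sg,\G}(u_{i+1})$ (again with summand $w_i$). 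Taking into account the signs $+s^+_{\sg,\G}$, $-s^-_{\sg,\G}$, $+t^+_{\sg,\G}$, $-t^-_{\sg,\G}$ in $F_\G$, this means in both cases that the $i$-th edge contributes $+w_i$ to $F_\G(u_i)$, $-w_i$ to $F_\G(u_{i+1})$, and nothing to $F_\G(x)$ for $x\notin\{u_i,u_{i+1}\}$.

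I would then assemble these contributions. Because $\G$ is a cycle with $m\ge 4$, the vertices $u_0,\dots,u_{m-1}$ are pairwise distinct, each of them is incident in $\G$ to exactly two edges --- the one entering it and the one leaving it (indices taken modulo $m$) --- and no point of $X$ outside this list lies on any edge of $\G$; moreover distinct indices give distinct edges, so nothing is counted twice. Summing, $F_\G$ vanishes off the cycle and $F_\G(u_j)=w_j-w_{j-1}$ for every $j$, with $w_{-1}:=w_{m-1}$. Hence $F_\G\equiv 0$ on $X$ if and only if $w_0=w_1=\cdots=w_{m-1}$, which is exactly the statement that $\sg$ is constant on $\G$; this proves the per-cycle equivalence and, with it, the lemma. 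The only slightly delicate point is the sign and index bookkeeping in the two-case step of the middle paragraph; beyond that the argument is a straightforward cyclic telescoping with no real obstacle.
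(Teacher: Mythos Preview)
Your proof is correct and follows essentially the same approach as the paper: both reduce via the preceding lemma to a single cycle and then evaluate $s^+_{\sg,\G}-s^-_{\sg,\G}+t^+_{\sg,\G}-t^-_{\sg,\G}$ at each vertex $u_j$, obtaining the difference of the $\sg$-values on the two incident edges. The paper carries this out by a four-case split on the relative order of $u_{j-1},u_j,u_{j+1}$, whereas you organize the same computation edge-by-edge and read off the telescoping directly; the content is identical.
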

\begin{proof}
	Let $\sg$ be admissible and $\G:u_0,u_1,\dots,u_m=u_0$ a cycle in $X$. Fix an arbitrary $1\le i\le m-1$. Since $\G$ is a cycle, for all $0\le j\le m-1$ we have $u_j=u_i\iff j=i$ and $u_{j+1}=u_i\iff j=i-1$ (modulo $m$). There are $4$ cases.
	
	\textit{Case 1.} $u_{i-1}<u_i<u_{i+1}$. Then $s^+_{\sg,\G}(u_i)=\sg(u_i,u_{i+1})$, $s^-_{\sg,\G}(u_i)=t^+_{\sg,\G}(u_i)=0$, $t^-_{\sg,\G}(u_i)=\sg(u_{i-1},u_i)$. Thus, \cref{s^+-s^-+t^+-t^-=0} yields 
	\begin{align}\label{sg(u_i_u_(i+1))=sg(u_(i-1)_u_i)}
		\sg(u_i,u_{i+1})=\sg(u_{i-1},u_i).
	\end{align}

	\textit{Case 2.} $u_{i-1}<u_i>u_{i+1}$. Then $s^\pm_{\sg,\G}(u_i)=0$, $t^+_{\sg,\G}(u_i)=\sg(u_{i+1},u_i)$, $t^-_{\sg,\G}(u_i)=\sg(u_{i-1},u_i)$. Thus, \cref{s^+-s^-+t^+-t^-=0} yields 
	\begin{align}\label{sg(u_(i+1)_u_i)=sg(u_(i-1)_u_i)}
		\sg(u_{i+1},u_i)=\sg(u_{i-1},u_i).
	\end{align}

\textit{Case 3.} $u_{i-1}>u_i<u_{i+1}$. Then $s^+_{\sg,\G}(u_i)=\sg(u_i,u_{i+1})$, $s^-_{\sg,\G}(u_i)=\sg(u_i,u_{i-1})$, $t^\pm_{\sg,\G}(u_i)=0$. Thus, \cref{s^+-s^-+t^+-t^-=0} yields 
\begin{align}\label{sg(u_i_u_(i+1))=sg(u_i_u_(i-1))}
	\sg(u_i,u_{i+1})=\sg(u_i,u_{i-1}).
\end{align}

\textit{Case 4.} $u_{i-1}>u_i>u_{i+1}$. Then $s^+_{\sg,\G}(u_i)=t^-_{\sg,\G}(u_i)=0$, $s^-_{\sg,\G}(u_i)=\sg(u_i,u_{i-1})$, $t^+_{\sg,\G}(u_i)=\sg(u_{i+1},u_i)$. Thus, \cref{s^+-s^-+t^+-t^-=0} yields 
\begin{align}\label{sg(u_(i+1)_u_i)=sg(u_i_u_(i-1))}
	\sg(u_{i+1},u_i)=\sg(u_i,u_{i-1}).
\end{align}

Since $1\le i\le m-1$ was arbitrary, \cref{sg(u_i_u_(i+1))=sg(u_(i-1)_u_i),sg(u_(i+1)_u_i)=sg(u_(i-1)_u_i),sg(u_i_u_(i+1))=sg(u_i_u_(i-1)),sg(u_(i+1)_u_i)=sg(u_i_u_(i-1))} show that $\sg$ is constant on $\G$.

Conversely, assume that $\sg$ is constant on a cycle $\G:u_0,u_1,\dots,u_m=u_0$ and $x\in X$. If $x=u_i$ for some $1\le i\le m-1$, then \cref{s^+-s^-+t^+-t^-=0} at $x$ is equivalent to one of the equalities \cref{sg(u_i_u_(i+1))=sg(u_(i-1)_u_i),sg(u_(i+1)_u_i)=sg(u_(i-1)_u_i),sg(u_i_u_(i+1))=sg(u_i_u_(i-1)),sg(u_(i+1)_u_i)=sg(u_i_u_(i-1))} depending on the $4$ possible cases. If $x\not\in\G$, then \cref{s^+-s^-+t^+-t^-=0} at $x$ is trivial, because $s^\pm_{\sg,\G}(x)=t^\pm_{\sg,\G}(x)=0$.
\end{proof}

The map $\sg$ can take different values on distinct cycles of $X$, as the following example shows.
\begin{exm}
	Let $X=\{1,2,3,4,5,6,7\}$ with the following Hasse diagram.
	\begin{center}
		\begin{tikzpicture}
			\draw (0,0)-- (-1,2);
			\draw (-1,2)-- (-1,0);
			\draw (-1,0)-- (-2,2);
			\draw (-2,2)-- (0,0);
			\draw (0,0)-- (1,2);
			\draw (1,2)-- (1,0);
			\draw (1,0)-- (2,2);
			\draw (2,2)-- (0,0);
			\draw[fill=black] (0,0) circle (0.05);
			\draw[fill=black] (-1,2) circle (0.05);
			\draw[fill=black] (-1,0) circle (0.05);
			\draw[fill=black] (-2,2) circle (0.05);
			\draw[fill=black] (1,2) circle (0.05);
			\draw[fill=black] (2,2) circle (0.05);
			\draw[fill=black] (1,0) circle (0.05);
			\draw (-1,-0.3) node {$1$};
			\draw (0,-0.3) node {$2$};
			\draw (1,-0.3) node {$3$};
			\draw (-2,2.3) node {$4$};
			\draw (-1,2.3) node {$5$};
			\draw (1,2.3) node {$6$};
			\draw (2,2.3) node {$7$};
		\end{tikzpicture}
	\end{center}
Consider the $K$-linear map $\vf:I(X,K)\to I(X,K)$ defined as follows: 
$\vf(e_1)=e_1$,
$\vf(e_{14})=e_{14}$,
$\vf(e_{15})=e_{15}$,
$\vf(e_2)=e_2+e_3+e_6+e_7$,
$\vf(e_{24})=e_{24}$,
$\vf(e_{25})=e_{25}$,
$\vf(e_4)=e_4$,
$\vf(e_5)=e_5$,
$\vf(e_{26})=\vf(e_{27})=\vf(e_3)=\vf(e_{36})=\vf(e_{37})=\vf(e_6)=\vf(e_7)=0$.
Then $\vf\in\Dl(I(X,K))$ and $\vf$ satisfies \cref{vf(e_xy)=sg(x_y)e_xy} with $\sg(1,4)=\sg(1,5)=\sg(2,4)=\sg(2,5)=1$ and $\sg(2,6)=\sg(2,7)=\sg(3,6)=\sg(3,7)=0$. It is also diagonality preserving. 
\end{exm}

\begin{lem}\label{existense-of-vf_sg_k}
	Let $u_0\in X$ and $\sg:X^2_<\to K$ a map constant on chains and cycles in $X$. Then there exists a unique diagonality preserving $\vf\in\Dl(I(X,K))$ such that $\vf(e_x)(u_0,u_0)=0$ for all $x\in X$ and $\sg_\vf=\sg$.	
\end{lem}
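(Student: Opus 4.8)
The plan is to construct $\vf$ explicitly on the basis of $I(X,K)$ and then verify it is a $\frac 12$-derivation. First I would define $\vf$ on $[I(X,K),I(X,K)]$ by the forced formula $\vf(e_{xy})=\sg(x,y)e_{xy}$ for all $x<y$; this is legitimate since $\sg$ is given, and by \cref{vf-on-[I(X_K)_I(X_K)]} this already makes $\vf$ a $\frac 12$-derivation when restricted to $[I(X,K),I(X,K)]$ (using that $\sg$ is constant on chains). Next I would define $\vf$ on $D(X,K)$. For each $x\in X$ and each $v\in X$, choose a walk $\G:u_0,u_1,\dots,u_m=v$ from the fixed basepoint $u_0$ to $v$ (possible by connectedness) and set
\begin{align*}
	\vf(e_x)(v,v)=-s^+_{\sg,\G}(x)+s^-_{\sg,\G}(x)-t^+_{\sg,\G}(x)+t^-_{\sg,\G}(x),
\end{align*}
and $\vf(e_x)(u,w)=0$ for $u\ne w$, so $\vf(e_x)\in D(X,K)$; then extend $\vf$ linearly to all of $I(X,K)$. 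By construction $\vf$ is diagonality preserving and $\vf(e_x)(u_0,u_0)=0$.

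The first thing to check is that this is \emph{well-defined}, i.e.\ independent of the choice of walk $\G$ from $u_0$ to $v$. If $\G_1,\G_2$ are two such walks, then $\G:=\G_1*\G_2\m$ is a closed walk, and by \cref{properties-of-s-and-t} the quantity $s^+_{\sg,\G}-s^-_{\sg,\G}+t^+_{\sg,\G}-t^-_{\sg,\G}$ is additive under composition and changes sign under inversion; since $\sg$ is constant on cycles, it is admissible by \cref{sg-admissible<=>constant-on-cycles}, so \cref{s^+-s^-+t^+-t^-=0} holds for the closed walk $\G$, which gives exactly that the two walks yield the same value. Uniqueness of $\vf$ is then immediate from \cref{uniqueness-of-vf_sg_k}: any diagonality preserving $\frac 12$-derivation with the prescribed $\sg_\psi=\sg$ and $\psi(e_x)(u_0,u_0)=0$ must agree with $\vf$ on every $e_x$, and on $[I(X,K),I(X,K)]$ it is forced by $\sg$, so $\psi=\vf$.

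The substantive part is verifying that $\vf\in\Dl(I(X,K))$. By \cref{vf-on-D(X_K)} (applied via \cref{glavlem}-style bilinearity — actually just because $\vf$ is diagonality preserving, $\vf$ restricted to $D(X,K)$ automatically satisfies \cref{vf(xy)=half(vf(x)y+xvf(y))} trivially since $[D,D]=0$), and by \cref{vf-on-[I(X_K)_I(X_K)]}, the identity \cref{vf(xy)=half(vf(x)y+xvf(y))} already holds when both arguments lie in $D(X,K)$ or both lie in $[I(X,K),I(X,K)]$. Since $I(X,K)=D(X,K)\oplus[I(X,K),I(X,K)]$, by bilinearity it remains only to check the mixed case: that
\begin{align*}
	\vf([e_x,e_{uv}])=\tfrac 12\bigl([\vf(e_x),e_{uv}]+[e_x,\vf(e_{uv})]\bigr)
\end{align*}
for all $x\in X$ and $u<v$. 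Here $[e_x,e_{uv}]$ equals $e_{uv}$ if $x=u$, $-e_{uv}$ if $x=v$, and $0$ otherwise, so the left side is $\sg(u,v)e_{uv}$, $-\sg(u,v)e_{uv}$, or $0$. On the right, $[e_x,\vf(e_{uv})]=\sg(u,v)[e_x,e_{uv}]$, and $[\vf(e_x),e_{uv}]=(\vf(e_x)(u,u)-\vf(e_x)(v,v))e_{uv}$; so the whole verification reduces to showing $\vf(e_x)(u,u)-\vf(e_x)(v,v)$ equals $\sg(x,v)$ when $x=u$, equals $-\sg(u,x)$ when $x=v$, and equals $0$ otherwise. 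This is exactly the relation \cref{vf(e_x)(u_u)-vf(e_x)(v_v)-in-terms-of-sg} in reverse, and it follows from \cref{vf(e_x)(u_m_u_m)-in-terms-of-vf(e_x)(u_0_u_0)}: take a walk from $u_0$ to $u$, extend it by the single edge $(u,v)$ to a walk from $u_0$ to $v$, and compare the defining formulas for $\vf(e_x)(u,u)$ and $\vf(e_x)(v,v)$; the difference telescopes to the single term $\Phi_{\G,i}(x)$ described in \cref{vf(e_x)(u_(i+1)_u_(i+1))-vf(e_x)(u_i_u_i)} with $(u_i,u_{i+1})=(u,v)$, which is precisely the claimed value. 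The main obstacle is really just the well-definedness via admissibility; once that bookkeeping with $s^\pm,t^\pm$ is in place, everything else is a short reduction to the already-established \cref{vf-on-[I(X_K)_I(X_K)]} and the additivity lemma.
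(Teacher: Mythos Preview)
Your approach is essentially the paper's own, and most of the steps are correct, but there is one genuine gap in the final verification. When you compute $\vf(e_x)(u,u)-\vf(e_x)(v,v)$ for arbitrary $u<v$, you propose to ``extend [the walk to $u$] by the single edge $(u,v)$''. This is only legitimate when $l(u,v)=1$: by definition, consecutive vertices of a walk must be a covering pair. For general $u<v$ you must extend by a maximal chain $u=z_0<z_1<\dots<z_k=v$, and then the difference telescopes to a \emph{sum} $\sum_j\Phi_{\G,j}(x)$ rather than a single term. This introduces a case you have not treated: when $x=z_j$ for some $0<j<k$ (so $x\notin\{u,v\}$ but $x$ lies on the chosen chain), two nonzero $\Phi$-terms appear, namely $\sg(x,z_{j+1})$ and $-\sg(z_{j-1},x)$, and their cancellation requires precisely the hypothesis that $\sg$ is constant on chains. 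Similarly, in the cases $x=u$ and $x=v$ you get $\sg(u,z_1)$ and $-\sg(z_{k-1},v)$ respectively, which must be identified with $\sg(u,v)$ and $-\sg(u,v)$ --- again using constancy on chains. Once this case analysis is added (it is exactly the paper's Cases 1--4), the argument is complete.
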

\begin{proof}
	Formula \cref{vf(e_xy)=sg(x_y)e_xy} is a unique way to define $\vf$ on $[I(X,K),I(X,K)]$. Then $\vf|_{[I(X,K),I(X,K)]}\in\Dl([I(X,K),I(X,K)])$ as observed in \cref{vf-on-[I(X_K)_I(X_K)]}.
	
	It remains to define $\vf|_{D(X,K)}$. Set $\vf(e_x)(u_0,u_0):=0$ for all $x\in X$. Given $v\in X$, choose a walk $\G: u_0,u_1,\dots,u_m=v$ from $u_0$ to $v$ and define $\vf(e_x)(v,v)$ by  \cref{vf(e_x)(u_m_u_m)=vf(e_x)(u_0_u_0)+s-and-t}. The definition does not depend on the choice of $\G$. Indeed, if there is another walk $\G': u'_0=u_0,u'_1,\dots,u'_{m'}=v$ from $u_0$ to $v$, then there is a closed walk $\Omega=\G*\G'\m$. Since $\sg$ is constant on cycles in $X$, it is admissible by \cref{sg-admissible<=>constant-on-cycles}, so in view of \cref{properties-of-s-and-t} we have
	\begin{align*}
		0&=s^+_{\sg,\Omega}(x)-s^-_{\sg,\Omega}(x)+t^+_{\sg,\Omega}(x)-t^-_{\sg,\Omega}(x)\\
		&=s^+_{\sg,\G}(x)+s^-_{\sg,\G'}(x)-s^-_{\sg,\G}(x)-s^+_{\sg,\G'}(x)+t^+_{\sg,\G}(x)+t^-_{\sg,\G'}(x)-t^-_{\sg,\G}(x)-t^+_{\sg,\G'}(x),
	\end{align*}
whence
\begin{align*}
	s^+_{\sg,\G}(x)-s^-_{\sg,\G}(x)+t^+_{\sg,\G}(x)-t^-_{\sg,\G}(x)=s^+_{\sg,\G'}(x)-s^-_{\sg,\G'}(x)+t^+_{\sg,\G'}(x)-t^-_{\sg,\G'}(x).
\end{align*}
	So, the right-hand side of \cref{vf(e_x)(u_m_u_m)=vf(e_x)(u_0_u_0)+s-and-t} does not depend on the walk from $u_0$ to $v$.
	
	Let us show that $\vf$ satisfies \cref{vf(e_x)(u_u)-vf(e_x)(v_v)-in-terms-of-sg} for all $u<v$. Indeed, there is a walk $\G: u_0,\dots,u_l=u,\dots,u_m=v$, where $u_l<\dots<u_m$. Denote $\G':u_0,\dots,u_l$ and $\G'':u_l,\dots,u_m$, so $\G=\G'*\G''$. Using $\G'$ and $\G$ to define $\vf(e_x)(u,u)$ and $\vf(e_x)(v,v)$, respectively, and applying \cref{properties-of-s-and-t}, we have
	\begin{align}
		\vf(e_x)(u,u)-\vf(e_x)(v,v)&=-s^+_{\sg,\G'}(x)+s^-_{\sg,\G'}(x)-t^+_{\sg,\G'}(x)+t^-_{\sg,\G'}(x)\notag\\
		&\quad+s^+_{\sg,\G}(x)-s^-_{\sg,\G}(x)+t^+_{\sg,\G}(x)-t^-_{\sg,\G}(x)\notag\\
		&=s^+_{\sg,\G''}(x)-s^-_{\sg,\G''}(x)+t^+_{\sg,\G''}(x)-t^-_{\sg,\G''}(x).\label{vf(e_x)(u_u)-vf(e_x)(v_v)=s^+-s^_+t^+-t^-}
	\end{align}
The proof of \cref{vf(e_x)(u_u)-vf(e_x)(v_v)-in-terms-of-sg} splits into the following $4$ cases, where we use \cref{s^+-and-s^-,t^+-and-t^-} and the fact that $\sg$ is constant on chains.

	\textit{Case 1.} $x=u$. Then $x=u_l$, so $s^+_{\sg,\G''}(x)=\sg(x,u_{l+1})$ and $s^-_{\sg,\G''}(x)=t^\pm_{\sg,\G''}(x)=0$. Therefore, \cref{vf(e_x)(u_u)-vf(e_x)(v_v)=s^+-s^_+t^+-t^-} equals $\sg(x,u_{l+1})=\sg(x,v)$. 
	
	\textit{Case 2.} $x=v$. Then $x=u_m$, so $t^-_{\sg,\G''}(x)=\sg(u_{m-1},u_m)$ and $s^\pm_{\sg,\G''}(x)=t^+_{\sg,\G''}(x)=0$. Therefore, \cref{vf(e_x)(u_u)-vf(e_x)(v_v)=s^+-s^_+t^+-t^-} equals $-\sg(u_{m-1},u_m)=-\sg(u,x)$. 
	
	\textit{Case 3.} $x=u_k$ for some $l<k<m$. Then $s^+_{\sg,\G}(x)=\sg(x,u_{k+1})$, $t^-_{\sg,\G}(x)=\sg(u_{k-1},x)$ and $s^-_{\sg,\G''}(x)=t^+_{\sg,\G''}(x)=0$. Therefore, \cref{vf(e_x)(u_u)-vf(e_x)(v_v)=s^+-s^_+t^+-t^-} equals $\sg(x,u_{k+1})-\sg(u_{k-1},x)=0$.
	
	\textit{Case 4.} $x\ne u_k$ for all $l\le k\le m$. Then $s^\pm_{\sg,\G''}(x)=t^\pm_{\sg,\G''}(x)=0$, and \cref{vf(e_x)(u_u)-vf(e_x)(v_v)=s^+-s^_+t^+-t^-} equals $0$.


	Finally, define $\vf(e_x)=\sum_{v\in X}\vf(e_x)(v,v)e_v$. Obviously, $\vf|_{D(I(X,K))}\in\Dl(D(I(X,K)))$. We are left to show that $\vf$ acts as a $\frac 12$-derivation on $[e_x,e_{uv}]$ for arbitrary $x$ and $u<v$. There are $3$ cases, each of which uses \cref{vf(e_xy)=sg(x_y)e_xy,vf(e_x)(u_u)-vf(e_x)(v_v)-in-terms-of-sg}.
	
	\textit{Case 1.} $x=u$. Then $\vf([e_x,e_{uv}])=\vf(e_{xv})=\sg(x,v)e_{xv}$ and
	\begin{align*}
		[\vf(e_x),e_{uv}]&=(\vf(e_x)(u,u)-\vf(e_x)(v,v))e_{uv}=\sg(x,v)e_{xv},\\
		[e_x,\vf(e_{uv})]&=[e_x,\sg(x,v)e_{xv}]=\sg(x,v)e_{xv}.
	\end{align*}

\textit{Case 2.} $x=v$. Then $\vf([e_x,e_{uv}])=-\vf(e_{ux})=-\sg(u,x)e_{ux}$ and 
\begin{align*}
	[\vf(e_x),e_{uv}]&=(\vf(e_x)(u,u)-\vf(e_x)(v,v))e_{uv}=-\sg(u,x)e_{ux},\\
	[e_x,\vf(e_{uv})]&=[e_x,\sg(u,x)e_{ux}]=-\sg(u,x)e_{ux}.
\end{align*}

\textit{Case 3.} $x\not\in\{u,v\}$. Then $\vf([e_x,e_{uv}])=\vf(0)=0$ and
\begin{align*}
	[\vf(e_x),e_{uv}]&=(\vf(e_x)(u,u)-\vf(e_x)(v,v))e_{uv}=0,\\
	[e_x,\vf(e_{uv})]&=[e_x,\sg(u,v)e_{uv}]=0.
\end{align*}
In any case, $2\vf([e_x,e_{uv}])=[\vf(e_x),e_{uv}]+[e_x,\vf(e_{uv})]$. Thus, $\vf\in\Dl(I(X,K))$. By construction $\vf$ is diagonality preserving and $\sg_\vf=\sg$. In view of \cref{uniqueness-of-vf_sg_k} the restriction $\vf|_{D(X,K)}$ is uniquely determined by the condition $\vf(e_x)(u_0,u_0)=0$ for all $x\in X$. So, $\vf$ is unique.
\end{proof}


\begin{defn}\label{defn-of-tau_sg_0_c}
	Fix $u_0\in X$. Given a map $\sg:X^2_<\to K$ which is constant on chains and cycles in $X$, define $\vf_\sg$ to be the diagonality preserving $\frac 12$-derivation of $I(X,K)$ such that $\vf_\sg|_{[I(X,K),I(X,K)]}$ is given by \cref{vf(e_xy)=sg(x_y)e_xy} and $\vf_\sg|_{D(X,K)}$ is uniquely determined
	by 
	\begin{align}\label{vf_sg(e_x)(u_0_u_0)=0}
		\vf_\sg(e_x)(u_0,u_0)=0
	\end{align}
	as in \cref{existense-of-vf_sg_k}.
\end{defn}

\begin{defn}
Given $\kp:X\to K$, define the central-valued $\vf_\kp\in\Dl(I(X,K))$ by means of
\begin{align}\label{vf_kp(e_xy)=kp(x).dl}
	\vf_\kp(e_{xy})=
	\begin{cases}
		\kp(x)\dl, & x=y,\\
		0, & x\ne y.
	\end{cases}
\end{align}
\end{defn}

\begin{prop}\label{vf-decomp-as-tau_0_sg_c}
	Fix $u_0\in X$. Then each diagonality preserving $\vf\in\Dl(I(X,K))$ is uniquely represented in the form
	\begin{align}\label{vf=vf_sg+vf_kp}
		\vf=\vf_\sg+\vf_\kp,
	\end{align}
	where $\sg:X^2_<\to K$ is a map which is constant on chains and cycles in $X$ and $\kp:X\to K$.
\end{prop}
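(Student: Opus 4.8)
The plan is to subtract off the combinatorial part $\vf_\sg$ and to show that what remains is central-valued, hence of the form $\vf_\kp$.

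\emph{Existence.} Given a diagonality preserving $\vf\in\Dl(I(X,K))$, I would first let $\sg=\sg_\vf:X^2_<\to K$ be the map furnished by \cref{vf(e_xy)=multiple-of-e_xy}, so that $\vf(e_{xy})=\sg(x,y)e_{xy}$ for all $x<y$, that is, \cref{vf(e_xy)=sg(x_y)e_xy} holds. By \cref{vf(e_xy)(x_y)=const}\cref{vf(e_xy)(x_y)=vf(e_uv)(u_v)} this $\sg$ is constant on chains, and by \cref{admissibility-of-sg} together with \cref{sg-admissible<=>constant-on-cycles} it is constant on cycles, so $\vf_\sg$ from \cref{defn-of-tau_sg_0_c} is well-defined. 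Next I would set $\psi:=\vf-\vf_\sg$. Since $\Dl(I(X,K))$ is a $K$-subspace (the defining identity \cref{vf(xy)=half(vf(x)y+xvf(y))} is linear in $\vf$) and both $\vf$ and $\vf_\sg$ are diagonality preserving, $\psi$ is again a diagonality preserving $\frac12$-derivation, and by construction $\psi(e_{xy})=0$ for all $x<y$, so $\psi$ annihilates $[I(X,K),I(X,K)]=\gen{e_{xy}\mid x<y}$. Then \cref{vf(D)-sst-Z<=>vf([I_I])=0} yields $\psi(D(X,K))\sst Z(I(X,K))=\gen\dl$, so for each $x\in X$ there is a (unique) scalar $\kp(x)\in K$ with $\psi(e_x)=\kp(x)\dl$. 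Comparing $\psi$ with $\vf_\kp$ on the basis $\{e_x\mid x\in X\}\cup\{e_{xy}\mid x<y\}$ and using \cref{vf_kp(e_xy)=kp(x).dl} then gives $\psi=\vf_\kp$, hence $\vf=\vf_\sg+\vf_\kp$.

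\emph{Uniqueness.} Assuming a decomposition $\vf=\vf_\sg+\vf_\kp=\vf_{\sg'}+\vf_{\kp'}$, I would evaluate on each $e_{xy}$, $x<y$: the summands $\vf_\kp,\vf_{\kp'}$ vanish on $[I(X,K),I(X,K)]$, whereas $\vf_\sg(e_{xy})=\sg(x,y)e_{xy}$ and $\vf_{\sg'}(e_{xy})=\sg'(x,y)e_{xy}$, forcing $\sg=\sg'$. By the uniqueness clause of \cref{existense-of-vf_sg_k} this gives $\vf_\sg=\vf_{\sg'}$, hence $\vf_\kp=\vf_{\kp'}$, and evaluating on $e_x$ yields $\kp=\kp'$.

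I do not expect a genuine obstacle here: all the substance has already been done in \cref{existense-of-vf_sg_k} (existence and uniqueness of $\vf_\sg$ for a given map $\sg$ constant on chains and cycles) and in \cref{vf(D)-sst-Z<=>vf([I_I])=0}. The only points requiring care are that $\Dl(I(X,K))$ is closed under differences and that diagonality preservation passes to $\psi$, together with the fact that $Z(I(X,K))=\gen\dl$, so that a central-valued map on the $e_x$ is automatically a scalar multiple of $\dl$ --- exactly the shape of $\vf_\kp$.
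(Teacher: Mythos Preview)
Your proof is correct and follows essentially the same approach as the paper: extract $\sg=\sg_\vf$, form $\vf_\sg$, and identify the difference with some $\vf_\kp$. The only minor variation is that you invoke \cref{vf(D)-sst-Z<=>vf([I_I])=0} to see that $\psi=\vf-\vf_\sg$ is central-valued, whereas the paper defines $\kp(x)=\vf(e_x)(u_0,u_0)$ directly and appeals to \cref{uniqueness-of-vf_sg_k} to conclude $\vf|_{D(X,K)}=(\vf_\sg+\vf_\kp)|_{D(X,K)}$; both arguments amount to the same thing.
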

\begin{proof}
	Let $\sg=\sg_\vf$. Then $\sg$ is constant on chains and cycles in $X$ by \cref{vf(e_xy)(x_y)=const}\cref{vf(e_xy)(x_y)=vf(e_uv)(u_v)} and \cref{sg-admissible<=>constant-on-cycles}.
	Hence, $\vf_\sg$ is well-defined. For all $x<y$ we have
	$
	\vf(e_{xy})=\sg(x,y)e_{xy}=\vf_\sg(e_{xy})=(\vf_\sg+\vf_\kp)(e_{xy}),
	$
	so $\vf|_{[I(X,K),I(X,K)]}=(\vf_\sg+\vf_\kp)|_{[I(X,K),I(X,K)]}$. Moreover, setting $\kp(x)=\vf(e_x)(u_0,u_0)$ for all $x\in X$, we have $\vf(e_x)(u_0,u_0)=(\vf_\sg+\vf_\kp)(e_x)(u_0,u_0)$ by \cref{vf_sg(e_x)(u_0_u_0)=0,vf_kp(e_xy)=kp(x).dl}, so $\vf|_{D(X,K)}=(\vf_\sg+\vf_\kp)|_{D(X,K)}$ by \cref{uniqueness-of-vf_sg_k}. Thus, \cref{vf=vf_sg+vf_kp} holds. The condition \cref{vf_sg(e_x)(u_0_u_0)=0} guarantees the uniqueness of $\kp$ in \cref{vf=vf_sg+vf_kp}, while the uniqueness of $\sg$ in \cref{vf=vf_sg+vf_kp} is given by \cref{vf(e_xy)=sg(x_y)e_xy}.
\end{proof}

As a consequence of \cref{vf-decomp-as-tau_0_sg_c,vf=inner+diag-pres}, we get the following complete characterization of $\frac 12$-derivations of $I(X,K)$.
\begin{thrm}\label{descr-half-der-I(X_K)}
	Let $X$ be finite and connected and fix $u_0\in X$. Then each $\vf\in\Dl(I(X,K))$ is of the form
	\begin{align*}
		\vf=\ad_c+\vf_\sg+\vf_\kp
	\end{align*}
for a unique $c\in Z([I(X,K),I(X,K)])$, a unique map $\sg:X^2_<\to K$ which is constant on chains and cycles in $X$ and a unique $\kp:X\to K$.
\end{thrm}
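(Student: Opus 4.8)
The plan is to package the two structural results \cref{vf=inner+diag-pres} and \cref{vf-decomp-as-tau_0_sg_c}, which together have already done all the work. First I would invoke \cref{vf=inner+diag-pres} to write $\vf=\ad_c+\tl\vf$ for a unique $c\in Z([I(X,K),I(X,K)])$ and a unique diagonality preserving $\tl\vf\in\Dl(I(X,K))$. Since $u_0\in X$ has been fixed, I can then apply \cref{vf-decomp-as-tau_0_sg_c} to $\tl\vf$ and obtain $\tl\vf=\vf_\sg+\vf_\kp$ for a unique map $\sg:X^2_<\to K$ that is constant on chains and cycles in $X$ and a unique $\kp:X\to K$. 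Substituting the second decomposition into the first yields $\vf=\ad_c+\vf_\sg+\vf_\kp$, which is the asserted existence statement.

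For uniqueness, suppose $\vf=\ad_c+\vf_\sg+\vf_\kp=\ad_{c'}+\vf_{\sg'}+\vf_{\kp'}$ with $c,c'\in Z([I(X,K),I(X,K)])$, with $\sg,\sg'$ constant on chains and cycles, and with $\kp,\kp':X\to K$. The point I would stress is that $\vf_\sg+\vf_\kp$ is diagonality preserving: $\vf_\sg$ is so by \cref{defn-of-tau_sg_0_c}, while $\vf_\kp(e_x)=\kp(x)\dl\in D(X,K)$ by \cref{vf_kp(e_xy)=kp(x).dl}, so $\vf_\kp$ is diagonality preserving as well, and likewise for $\vf_{\sg'}+\vf_{\kp'}$. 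Hence $\vf=\ad_c+(\vf_\sg+\vf_\kp)$ and $\vf=\ad_{c'}+(\vf_{\sg'}+\vf_{\kp'})$ are both decompositions of the form whose uniqueness is guaranteed by \cref{vf=inner+diag-pres}, forcing $c=c'$ and $\vf_\sg+\vf_\kp=\vf_{\sg'}+\vf_{\kp'}$. The uniqueness clause of \cref{vf-decomp-as-tau_0_sg_c} then gives $\sg=\sg'$ and $\kp=\kp'$, completing the argument.

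I do not expect any genuine obstacle: this is essentially a bookkeeping statement, and the only point needing a moment of care is the verification just made that $\vf_\sg+\vf_\kp$ lies in the class of diagonality preserving $\frac12$-derivations, so that the uniqueness part of \cref{vf=inner+diag-pres} applies to the combined decomposition. If desired, one can also record the explicit formulas that fall out of the two proofs, namely $c(x,y)=\vf(e_y)(x,y)$ for $\Min(X)\ni x<y\in\Max(X)$ (and $0$ otherwise), $\sg(x,y)$ defined by $\vf(e_{xy})=\sg(x,y)e_{xy}$, and $\kp(x)=\vf(e_x)(u_0,u_0)$ --- the last using that $\ad_c(e_x)$ has zero diagonal part by \cref{af_c(e_xy)-and-af_c(e_x)}.
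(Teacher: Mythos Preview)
Your proposal is correct and follows exactly the route the paper takes: the theorem is stated as an immediate consequence of \cref{vf=inner+diag-pres} and \cref{vf-decomp-as-tau_0_sg_c}, and you have simply spelled out the existence and uniqueness bookkeeping that the paper leaves implicit. The one point you take care to verify---that $\vf_\sg+\vf_\kp$ is diagonality preserving so that the uniqueness clause of \cref{vf=inner+diag-pres} applies---is the only nontrivial check, and your justification via \cref{defn-of-tau_sg_0_c} and \cref{vf_kp(e_xy)=kp(x).dl} is sound.
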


\section{Transposed Poisson structures on the incidence algebra}
We first point out $3$ classes of transposed Poisson structures on $I(X,K)$ that will appear in the final description. It is obvious that any transposed Poisson structure of Poisson type is of the form
\begin{align}\label{e_x-cdot-e_y=mu_xy.dl}
	e_x\cdot e_y=\mu(x,y)\dl
\end{align}
for some $\mu:X^2\to K$ with $\mu(x,y)=\mu(y,x)$, where the associativity of \cref{e_x-cdot-e_y=mu_xy.dl} is equivalent to
\begin{align*}
	\mu(x,y)\sum_{v\in X}\mu(z,v)=\mu(y,z)\sum_{v\in X}\mu(x,v).
\end{align*}
Observe that we write only non-trivial products in \cref{e_x-cdot-e_y=mu_xy.dl}. 

Each $\nu\in Z([I(X,K),I(X,K)])$ defines the mutational structure whose non-trivial products are:
	\begin{align}\label{e_x-cdot-e_y=-nu_xy.e_xy}
		e_x\cdot e_y=e_y\cdot e_x=[[e_x,\nu],e_y]=	
		\begin{cases}
			\nu(x,y)e_{xy}, & \Min(X)\ni x<y\in\Max(X),\\
			-\sum_{x<v\in\Max(X)}\nu(x,v)e_{xv}, & x=y\in\Min(X),\\
			-\sum_{\Min(X)\ni u<x}\nu(u,x)e_{ux}, & x=y\in\Max(X).
		\end{cases}
	\end{align}

%

\begin{rem}\label{nu_xy-0-or-1}
	Applying a suitable automorphism $\phi\in\Aut(I(X,K),[\cdot,\cdot])$, we can make $\nu(x,y)\in\{0,1\}$ for all $\Min(X)\ni x<y\in\Max(X)$ in \cref{e_x-cdot-e_y=-nu_xy.e_xy}.
	
	Indeed, for all $x\le y$ define 
	\begin{align*}
		\phi(e_{xy})=
		\begin{cases}
			\nu(x,y)\m e_{xy}, & \Min(X)\ni x<y\in\Max(X)\text{ and }\nu(x,y)\ne 0,\\
			e_{xy}, & \text{otherwise}.
		\end{cases}
	\end{align*}
It is clear that $\phi$ is bijective and $\phi([e_{xy},e_{uv}])=[\phi(e_{xy}),\phi(e_{uv})]$ for $x<y$ and $u<v$, as well as for $x=y\not\in\{u,v\}$. Since also $\phi([e_x,e_{xy}])=\phi(e_{xy})=[e_x,\phi(e_{xy})]=[\phi(e_x),\phi(e_{xy})]$ and $\phi([e_{xy},e_y])=\phi(e_{xy})=[\phi(e_{xy}),e_y]=[\phi(e_{xy}),\phi(e_y)]$, we conclude that $\phi\in\Aut(I(X,K),[\cdot,\cdot])$. Applying $\phi$, we get the mutational structure corresponding to $\phi(\nu)=\sum_{\nu(x,y)\ne 0}e_{xy}$. 
\end{rem}

The definition of the third structure requires some preparation.
\begin{defn}
	We say that a pair $(x,y)$ of elements of $X$ is \textit{extreme}, if $x<y$ is a maximal chain in $X$ and there is no cycle in $X$ containing $x$ and $y$. Denote $X^2_e=\{(x,y)\in X^2\mid (x,y)\text{ is extreme}\}$.
\end{defn}

\begin{rem}\label{cycle-containing-x-and-y}
	Let $x<y$ with $l(x,y)=1$. If there is a cycle in $X$ containing $x$ and $y$, then there is a cycle in $X$ having $(x,y)$ as an edge.
	
	Indeed, if, for instance, $\G:u_0,u_1,\dots,u_i=x,\dots,u_j=y,\dots,u_m=u_0$ is a cycle, then $\G':u_0,u_1,\dots,u_i=x,u_j=y,\dots,u_m=u_0$ is also a cycle, because otherwise $i=1$, $m=j+1$, $u_0=y$ or $i=0$, $m=j+2$, $u_{j+1}=x$, where both cases lead to a repeated vertex in $\G$ ($u_0=u_j$ and $u_0=u_{j+1}$, respectively), a contradiction. 
\end{rem}

\begin{lem}\label{cycle-edge-(x_y)}
	Let $l(x,y)=1$. If there is a closed walk having $(x,y)$ (resp. $(y,x)$) as an edge and not having $(y,x)$ (resp. $(x,y)$) as an edge, then there is a cycle having $(x,y)$ (resp. $(y,x)$) as an edge.
\end{lem}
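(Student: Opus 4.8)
The plan is a minimal-length (extremal) argument. It suffices to handle the case of the edge $(x,y)$: the statement for $(y,x)$ follows by interchanging the roles of $x$ and $y$ throughout (the whole argument is symmetric in $x$ and $y$; equivalently one may pass to the dual poset). So assume there is a closed walk in $X$ that uses the edge $(x,y)$ but never uses the edge $(y,x)$, and among all such closed walks pick one, $\G:u_0,u_1,\dots,u_m=u_0$, of minimal length. Cyclically rotating $\G$ does not change its multiset of edges, so the rotated walk still uses $(x,y)$ and still avoids $(y,x)$; thus we may assume $u_0=x$ and $u_1=y$, and in particular $u_m=u_0=x$.

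Next I would show that minimality forces $\G$ to have no repeated vertex other than $u_0=u_m$. The mechanism is always the same: from any repetition one excises a strictly shorter \emph{closed} walk that still contains the edge $(x,y)$ and, being composed only of edges already present in $\G$, still avoids $(y,x)$ --- contradicting minimality. Concretely, if some internal vertex $u_k$ with $1\le k\le m-1$ equals $x$, then $\G':u_0,\dots,u_k$ is a shorter closed walk through $(u_0,u_1)=(x,y)$, a contradiction; hence no internal vertex equals $x$. If instead $u_i=u_j$ with $1\le i<j\le m-1$, then $\G'':u_0,\dots,u_i,u_{j+1},\dots,u_m$ is again a closed walk (the only new consecutive pair, $(u_i,u_{j+1})=(u_j,u_{j+1})$, is an edge of $\G$), it still contains $(u_0,u_1)=(x,y)$ since $i\ge1$, and it is strictly shorter because $j\ge i+2$ (consecutive vertices of a walk are distinct). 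Any repetition involving index $0$ or $m$ would place $x$ at an internal position, which is already excluded. Hence $u_0,\dots,u_{m-1}$ are pairwise distinct.

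Finally I would rule out $m\le 3$. A closed walk of length $1$ is impossible since consecutive vertices of a walk differ; if $m=2$ then $\G:x,y,x$ uses the edge $(y,x)$, a contradiction; and if $m=3$ then $\G:x,y,z,x$ with $x,y,z$ pairwise distinct and each of the three pairs $\{x,y\},\{y,z\},\{z,x\}$ a covering pair of $X$, hence all three pairwise comparable, so $\{x,y,z\}$ is a $3$-element chain. Writing its least and greatest elements as $a<c$, we get $l(a,c)\ge l(\{a,b,c\})=2$, contradicting that $\{a,c\}$ --- which is one of the three edges of $\G$ --- is a covering pair. Therefore $m\ge 4$, so $\G$ is a cycle having $(x,y)$ as an edge, as required.

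I do not expect a serious obstacle; the only point that needs genuine care is to verify that every excised sub-walk is still a \emph{bona fide} closed walk that retains the edge $(x,y)$ while never introducing the edge $(y,x)$. This is precisely where the hypothesis that $\G$ never uses $(y,x)$ is indispensable: without it, a shortening step could collapse the walk to the trivial back-and-forth $x,y,x$, which is not a cycle.
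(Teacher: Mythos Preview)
Your proof is correct. The overall strategy---reduce a closed walk to a cycle by excising repetitions while keeping the edge $(x,y)$ and never introducing $(y,x)$---is the same as the paper's, but your organization is different and somewhat slicker. The paper works constructively: it keeps the distinguished edge at an interior position $u_i=x,u_{i+1}=y$ and removes repetitions in three separate passes (one for repetitions straddling $i$, then two for repetitions entirely to the left or right of $i$), checking at each pass that the splice does not create the forbidden edge $(y,x)$. You instead take a minimal-length closed walk, cyclically rotate so that $(x,y)$ sits at position $(u_0,u_1)$, and then let minimality do all the work; this collapses the three-case analysis into a single excision argument. A further point in your favor is that you explicitly rule out $m=3$ by the chain argument, whereas the paper only states ``$m>2$'' and silently relies on the (true, but unstated) fact that a poset admits no $3$-cycle of covering pairs.
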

\begin{proof}
	Let $\G:u_0,\dots,u_i=x,u_{i+1}=y,\dots,u_m=u_0$ be a closed walk such that $(u_j,u_{j+1})\ne(y,x)$ for all $0\le j\le m-1$. If $u_k=u_l$ for some $0\le k\le i$ and $i+1\le l<m$, then replace $\G$ by $\G':u_k,\dots,u_i=x,u_{i+1}=y,\dots,u_l$. Observe that $\G'$, being a subwalk of $\G$, does not have $(y,x)$ as an edge. Repeating the procedure, if necessary, we may assume that the original closed walk $\G:u_0,\dots,u_i=x,u_{i+1}=y,\dots,u_m=u_0$ satisfies $u_k\ne u_l$ for all $0\le k\le i$ and $i+1\le l<m$. Now, if $u_k=u_l$ for some $0\le k<l\le i$, then consider the closed walk $\G':u_0,\dots,u_k,u_{l+1},\dots,u_i=x,u_{i+1}=y,\dots,u_m=u_0$. The edge $(y,x)$ can appear in $\G'$ only when $(u_k,u_{l+1})=(y,x)$, but this means that $\G$ has $(u_l,u_{l+1})=(y,x)$, which is impossible by the assumption. Similarly, if $u_k=u_l$ for some $i+1\le k<l\le m$, we replace $\G$ by $\G':u_0,\dots,u_i=x,u_{i+1}=y,\dots,u_k,u_{l+1},\dots,u_m=u_0$ which has no edges $(y,x)$ because otherwise $\G$ would have $(u_l,u_{l+1})=(y,x)$, a contradiction. Thus, replacing $\G$ by $\G'$ sufficiently many times, we may additionally assume that $u_k\ne u_l$ for $0\le k<l\le i$ and $i+1\le k<l\le m$. Since $m>2$ (otherwise $\G:x<y>x$ has edge $(y,x)$) the resulting $\G$ is a cycle having $(x,y)$ as an edge. 
	
	The case where $\G$ has an edge $(y,x)$ and no edge $(x,y)$ is analogous.
\end{proof}

\begin{lem}\label{path-ending-at-(x_y)-existense}
	Let $l(x,y)=1$ and $u_0\in X$. Then there is a path in $X$ starting at $u_0$ and ending at $(x,y)$ or $(y,x)$. 
\end{lem}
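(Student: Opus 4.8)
The plan is to use connectedness of $X$ directly, with a short case analysis on whether the second endpoint is already visited. First I would record the elementary but essential observation that, since $l(x,y)=1$, \emph{both} ordered pairs $(x,y)$ and $(y,x)$ are legitimate edges in the sense of the definition of a walk: we have $x<y$, and $l(x,y)=1$ is exactly what is required whether the edge is traversed upward (from $x$ to $y$) or downward (from $y$ to $x$).

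Next, invoking connectedness of $X$, I would fix a path $P\colon u_0=w_0,w_1,\dots,w_k=x$ from $u_0$ to $x$ and split into two cases. If $y\notin\{w_0,\dots,w_k\}$, then appending $y$ to $P$ produces the sequence $w_0,\dots,w_k=x,y$; this is still a walk, since the new edge $(x,y)$ is legitimate by the observation above, and its vertices are pairwise distinct because those of $P$ are and $y$ is new, so it is a path starting at $u_0$ and ending at the edge $(x,y)$. If instead $y=w_j$ for some index $j$, note that $j<k$ because $w_k=x\ne y$; then truncating $P$ at $w_j$ and appending $x$ gives $w_0,\dots,w_j=y,x$. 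This is a walk whose final edge $(y,x)$ is legitimate, and $x=w_k$ does not occur among $w_0,\dots,w_j$ since the $w_i$ are distinct and $k>j$; hence its vertices are pairwise distinct and it is a path starting at $u_0$ and ending at the edge $(y,x)$.

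I do not expect a genuine obstacle here: the argument is purely combinatorial and the only two points needing attention are (a) that an edge between $x$ and $y$ is admissible in both orientations, which is immediate from $l(x,y)=1$, and (b) that the two constructed sequences have pairwise distinct vertices, so that they qualify as paths and not merely walks — and this is precisely what the case distinction on whether $y\in P$ is designed to guarantee. (A symmetric argument starting from a path to $y$ would work equally well, so there is nothing asymmetric hidden in the choice of $x$ above.)
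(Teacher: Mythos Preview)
Your proof is correct and follows essentially the same approach as the paper: fix a path from $u_0$ to $x$, and either append $y$ or truncate at the first occurrence of $y$ and append $x$. The only cosmetic difference is that the paper first disposes of the case $u_0\in\{x,y\}$ separately, whereas your argument absorbs it into the general case analysis (which works fine, since for $u_0=x$ one lands in your first case with $k=0$, and for $u_0=y$ one lands in your second case with $j=0$).
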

\begin{proof}
	If $u_0\in\{x,y\}$, the result is trivial. Let $u_0\not\in\{x,y\}$ and choose a path $u_0,\dots,u_k=x$ from $u_0$ to $x$. If $u_i\ne y$ for all $i$, then $\G:u_0,\dots,u_k=x,u_{k+1}=y$ is a desired path. Otherwise, $u_i=y$ for a unique $0<i<k$, so $\G:u_0,\dots,u_i=y,u_{i+1}=x$ is a desired path (it is indeed a path, because $u_j\ne x$ for all $0<j<k$).
\end{proof}

\begin{lem}\label{path-ending-at-(x_y)-uniqueness}
	Let $l(x,y)=1$ and $u_0\in X$. If there are two paths in $X$ starting at $u_0$ and ending at $(x,y)$ and $(y,x)$, respectively, then there is a cycle in $X$ having $(x,y)$ as an edge. 
\end{lem}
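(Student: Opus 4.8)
The plan is to manufacture from the two hypothesised paths a single \emph{closed walk} which has $(x,y)$ as an edge but does \emph{not} have $(y,x)$ as an edge, and then feed it to \cref{cycle-edge-(x_y)}.

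First I would fix notation: let $\G_1$ be a path from $u_0$ ending at $(x,y)$ and $\G_2$ a path from $u_0$ ending at $(y,x)$, so that $\G_1$ has the form $u_0,\dots,x,y$ and $\G_2$ has the form $u_0,\dots,y,x$. I would first record that $u_0\notin\{x,y\}$: if $u_0=x$ then $\G_2$ would be a path starting and ending at $x$, which is impossible for a path of positive length, and if $u_0=y$ then $\G_1$ would be a path starting and ending at $y$; in particular $\G_2$ has at least three vertices.

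The key step is a truncation. Let $P$ be the walk obtained from $\G_2$ by deleting its last vertex. Then $P$ is a path from $u_0$ to $y$ that does not pass through $x$, since the vertices of $\G_2$ are pairwise distinct and $x$ is exactly the vertex that was removed. As the last vertex of $\G_1$ and the first vertex of $P\m$ are both $y$, the composition $\Omega:=\G_1*P\m$ is defined, and it is a closed walk, because it starts and ends at $u_0$. I then claim $\Omega$ has $(x,y)$ as an edge but not $(y,x)$. Indeed, $(x,y)$ is the last edge of $\G_1$, hence an edge of $\Omega$. For $(y,x)$: in $\G_1$, being a path of the form $u_0,\dots,x,y$, the vertex $y$ occurs only at the end and $x$ only immediately before it, so the unique edge of $\G_1$ incident to both $x$ and $y$ is its last one, $(x,y)$; in $P\m$ no edge is incident to $x$ at all, since $x\notin P$; and the vertex $y$ at which $\G_1$ and $P\m$ are glued creates no new edge, so every edge of $\Omega$ is an edge of $\G_1$ or of $P\m$. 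Hence $(y,x)$ is not an edge of $\Omega$.

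Finally I would apply \cref{cycle-edge-(x_y)} to $\Omega$ (using $l(x,y)=1$), which yields a cycle in $X$ having $(x,y)$ as an edge, as required. The only genuine point of the argument is the design of $\Omega$: a straight concatenation of $\G_1$ with a suitably oriented $\G_2$ would run through both orientations of the covering edge between $x$ and $y$, so $\G_2$ must be truncated just before that edge is re-used; the rest is routine bookkeeping with the notions of walk, path, inverse and composition from \cref{prelim}.
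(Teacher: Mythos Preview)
Your proof is correct and follows essentially the same approach as the paper: both construct the closed walk by concatenating the first path with the reverse of the second path truncated at $y$, verify that this walk contains the edge $(x,y)$ but not $(y,x)$ (using that each path has no repeated vertices, so $y$ appears only terminally in $\G_1$ and $x$ only terminally in $\G_2$), and then invoke \cref{cycle-edge-(x_y)}. Your explicit observation that $u_0\notin\{x,y\}$ is a harmless extra detail not spelled out in the paper.
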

\begin{proof}
	Let $\G:u_0,\dots,u_{m-1}=x,u_m=y$ and $\Omega:v_0=u_0,\dots,v_{k-1}=y,u_k=x$ be two paths. Consider the closed walk $u_0,\dots,u_{m-1}=x,u_m=y=v_{k-1},v_{k-2},\dots,v_0=u$. It has $(x,y)$ as an edge and does not have $(y,x)$ as an edge, because $u_i\ne y$ for all $0\le i\le m-1$  and $v_j\ne x$ for all $0\le j\le k-1$. It remains to apply \cref{cycle-edge-(x_y)}.
%
\end{proof}

In view of \cref{path-ending-at-(x_y)-existense,path-ending-at-(x_y)-uniqueness} the following definition makes sense.
\begin{defn}
	Let $(x,y)\in X^2_e$ and $u_0\in X$. We set $\sgn_{u_0}(x,y):=1$, if there exists a path starting at $u_0$ and ending at $(x,y)$. Otherwise there exists a path starting at $u_0$ and ending at $(y,x)$, in which case set $\sgn_{u_0}(x,y):=-1$.
\end{defn}

\begin{lem}\label{at-most-one-edge-(x_y)}
	Let $(x,y)\in X^2_e$, $u_0,v\in X$ and $\G$ a path from $u_0$ to $v$ having $(x,y)$ or $(y,x)$ as an edge. If $\sgn_{u_0}(x,y)=1$ (resp. $\sgn_{u_0}(x,y)=-1$), then $\G$ has a unique edge of the form $(x,y)$ (resp. $(y,x)$) and no edges of the form $(y,x)$ (resp. $(x,y)$).
\end{lem}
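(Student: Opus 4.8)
The statement claims that, assuming $(x,y)\in X^2_e$ (so $x<y$ is a maximal chain lying on no cycle) and given a path $\G$ from $u_0$ to $v$ that uses the undirected edge between $x$ and $y$, this edge is traversed exactly once and only in the direction consistent with $\sgn_{u_0}(x,y)$. The plan is to reduce everything to \cref{cycle-edge-(x_y)} and \cref{path-ending-at-(x_y)-uniqueness}, arguing by contradiction: since $(x,y)\in X^2_e$, no cycle can have $(x,y)$ or $(y,x)$ as an edge (by \cref{cycle-containing-x-and-y}, which upgrades ``no cycle through $x,y$'' to ``no cycle with edge $(x,y)$''), so any configuration that would force such a cycle is forbidden.

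\begin{proof}
	Suppose $\sgn_{u_0}(x,y)=1$, so there is a path $\Omega$ from $u_0$ to $(x,y)$; the case $\sgn_{u_0}(x,y)=-1$ is symmetric.

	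First we show $\G$ cannot contain the edge $(y,x)$. Indeed, if $\G$ had $(y,x)$ as an edge, then the initial subpath of $\G$ up to this edge would be a path from $u_0$ ending at $(y,x)$. Together with $\Omega$, a path from $u_0$ ending at $(x,y)$, \cref{path-ending-at-(x_y)-uniqueness} would produce a cycle having $(x,y)$ as an edge. But $(x,y)\in X^2_e$ means there is no cycle containing $x$ and $y$, and by \cref{cycle-containing-x-and-y} there is then no cycle having $(x,y)$ as an edge — a contradiction. Hence $\G$ has no edge $(y,x)$, and by hypothesis it therefore has at least one edge $(x,y)$.

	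It remains to see that $\G$ has only one edge $(x,y)$. Suppose it had two. Since $\G$ is a path, its vertices are distinct; but an edge $(x,y)$ forces consecutive vertices $x,y$, and two occurrences of this edge would require the vertex $x$ (and $y$) to appear twice in $\G$, contradicting that $\G$ is a path. Alternatively, truncating $\G$ between the two occurrences of the edge $(x,y)$ yields a closed walk having $(x,y)$ as an edge and, by the previous paragraph, no edge $(y,x)$; by \cref{cycle-edge-(x_y)} this gives a cycle with edge $(x,y)$, again contradicting $(x,y)\in X^2_e$ via \cref{cycle-containing-x-and-y}. Either way, $\G$ has exactly one edge of the form $(x,y)$ and none of the form $(y,x)$, as claimed.
\end{proof}

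The main obstacle is bookkeeping the directions of edges along walks so that \cref{cycle-edge-(x_y)} and \cref{cycle-containing-x-and-y} apply cleanly; once the ``path $\Rightarrow$ distinct vertices'' observation is in place, the uniqueness of the edge is immediate, and the orientation is pinned down by the contradiction with extremality.
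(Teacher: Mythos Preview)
Your proof is correct and follows essentially the same route as the paper's. The paper's argument is terser: it simply notes that an initial subpath of $\G$ ending at $(y,x)$ would force $\sgn_{u_0}(x,y)=-1$, invoking the well-definedness of $\sgn_{u_0}$ rather than re-deriving the cycle contradiction via \cref{path-ending-at-(x_y)-uniqueness} as you do; it also leaves the uniqueness of the edge implicit in the fact that $\G$ is a path, which you spell out. One minor remark: your appeal to \cref{cycle-containing-x-and-y} is unnecessary, since a cycle having $(x,y)$ as an edge trivially contains $x$ and $y$, so extremality alone already rules it out.
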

\begin{proof}
	Consider the case $\sgn_{u_0}(x,y)=1$. If $\G$ had $(y,x)$ as an edge, then there would be a subpath of $\G$ starting at $u$ and ending at $(y,x)$, whence $\sgn_{u_0}(x,y)=-1$, a contradiction. 
\end{proof}

\begin{defn}
	Let $l(x,y)=1$ and $u,v\in X$. We say that $u$ and $v$ are \textit{on the same side} with respect to $(x,y)$, if there is a path from $u$ to $v$ that does not have $(x,y)$ and $(y,x)$ as edges. Otherwise $u$ and $v$ are said to be \textit{on the opposite sides} with respect to $(x,y)$.
\end{defn}

\begin{rem}
	If $(x,y)\in X^2_e$ and $u,v$ are on the same side with respect to $(x,y)$, then no path from $u$ to $v$ has $(x,y)$ or $(y,x)$ as its edge, since otherwise by \cref{cycle-edge-(x_y),at-most-one-edge-(x_y)} there would exist a cycle containing either $(x,y)$ (whenever $\sgn_u(x,y)=1$) or $(y,x)$ (whenever $\sgn_u(x,y)=-1$) as an edge.
\end{rem}

The following lemma is a stronger version of \cref{at-most-one-edge-(x_y)}.

\begin{lem}\label{at-most-one-edge-(a_x)}
	Let $x,u\in X$. For each $v\in X$ there exists at most one $a<x$ such that 
	\begin{enumerate}
		\item $(a,x)\in X^2_e$ and $\sgn_u(a,x)=1$,
		\item $u,v$ are on the opposite sides with respect to $(a,x)$.
	\end{enumerate}  
The same holds if we replace ``$a<x$'' by ``$a>x$'' and ``$(a,x)$'' by ``$(x,a)$'', or ``$\sgn_u(a,x)=1$'' by ``$\sgn_u(a,x)=-1$''.
\end{lem}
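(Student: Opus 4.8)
The plan is to argue by contradiction: suppose there are two distinct elements $a,b<x$ (the case $a,b>x$ being symmetric), each satisfying (i) and (ii). From (i) we get paths witnessing $\sgn_u(a,x)=1$ and $\sgn_u(b,x)=1$, i.e.\ paths from $u$ ending at the edges $(a,x)$ and $(b,x)$; from (ii) we get paths from $u$ to $v$, say $\G_a$ and $\G_b$, such that $\G_a$ uses the edge $(a,x)$ or $(x,a)$ and $\G_b$ uses the edge $(b,x)$ or $(x,b)$. By \cref{at-most-one-edge-(x_y)}, since $\sgn_u(a,x)=1$ the path $\G_a$ has a unique edge of the form $(a,x)$ and no edge $(x,a)$; likewise $\G_b$ has a unique edge $(b,x)$ and no edge $(x,b)$. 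The idea is to splice $\G_a$ and $\G_b$ together at the common vertex $x$ to build a closed walk that visits $x$ via two \emph{different} lower covers $a$ and $b$, and then invoke \cref{cycle-edge-(x_y)} to extract a genuine cycle through one of the extreme edges $(a,x)$ or $(b,x)$, contradicting extremity.

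In detail, first I would truncate. Write $\G_a:u=w_0,\dots,w_p=a,w_{p+1}=x,\dots,w_q=v$, using that $(a,x)$ is the unique occurrence of that edge; let $\G_a^{\le}:w_0,\dots,w_p=a,w_{p+1}=x$ be the initial segment ending at the edge $(a,x)$. Similarly let $\G_b^{\le}:u=z_0,\dots,z_r=b,z_{r+1}=x$ be the initial segment of $\G_b$ ending at $(b,x)$. Now form the closed walk $\Omega:=\G_a^{\le}*(\G_b^{\le})^{-1}$, that is $u,\dots,a,x,b,\dots,u$; this is a closed walk from $u$ to $u$ that contains the edge $(a,x)$ and, reading the reversed half, the edge $(x,b)$. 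It does \emph{not} contain the edge $(x,a)$: the forward half has no such edge since $\G_a$ doesn't (it has only the edge $(a,x)$ there), and the reversed half $(\G_b^{\le})^{-1}$ has an edge $(x,a)$ only if $\G_b^{\le}$ has $(a,x)$, which would force $a\in\{b\}$ or a repetition impossible in the path $\G_b$. Hence $\Omega$ is a closed walk having $(a,x)$ as an edge and not having $(x,a)=(a,x)^{-1}$ as an edge, so \cref{cycle-edge-(x_y)} produces a cycle having $(a,x)$ as an edge. This cycle contains both $a$ and $x$, contradicting $(a,x)\in X^2_e$.

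The main obstacle I anticipate is the bookkeeping needed to be sure that $(x,a)$ genuinely does not appear as an edge of $\Omega$ (and, in the variant arguments, that the analogous forbidden edge is absent), because $\G_a$ and $\G_b$ may share vertices other than $u$ and $x$, so naive concatenation could create repeated vertices or introduce the unwanted reversed edge. The way around this is to lean on \cref{at-most-one-edge-(x_y)}, which already pins down exactly which edges through $x$ can occur in a path compatible with $\sgn_u(a,x)=1$, and on the observation that $\G_b$ is a \emph{path}, so the vertex $x$ occurs in it only once, with its unique incoming/outgoing neighbour being $b$; thus no edge $(x,a)$ or $(a,x)$ with $a\ne b$ can hide inside $\G_b^{\le}$. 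Once these two facts are in place, the splicing is clean and \cref{cycle-edge-(x_y)} finishes the job. The remaining cases (``$a,b>x$'', or $\sgn_u=-1$) are handled by the same construction after reversing the relevant orientations, exactly as indicated in the statement.
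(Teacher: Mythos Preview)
Your proposal is correct and follows essentially the same approach as the paper: assume two such elements $a\ne b$ exist, take paths from $u$ through $(a,x)$ and through $(b,x)$ (guaranteed by \cref{at-most-one-edge-(x_y)}), splice them into a closed walk containing the edge $(a,x)$ but not $(x,a)$, and apply \cref{cycle-edge-(x_y)} to obtain a cycle through the extreme edge $(a,x)$, a contradiction. The only cosmetic difference is that you truncate both paths at $x$ before splicing, whereas the paper keeps the full paths to $v$ and forms $\G'\ast\G''^{-1}$; in both cases the key observation is that $x$ occurs only once in the path $\G_b$ (resp.\ $\G''$), so the edge $(a,x)$ cannot appear there.
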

\begin{proof}
	Let $a',a''<x$ such that $(a',x),(a'',x)\in X^2_e$, $\sgn_u(a',x)=\sgn_u(a'',x)=1$ and $u,v$ are on the opposite sides with respect to $(a',x)$ and $(a'',x)$. By \cref{at-most-one-edge-(x_y)} there are two paths from $u$ to $v$, say, $\G'$ and $\G''$, containing $(a',x)$ and $(a'',x)$ as an edge, respectively. Then $\G''$ does not contain $(a',x)$, otherwise $x$ would appear more than once in $\G''$. So $\G'*\G''\m$ is a closed walk having $(a',x)$ as an edge and not having $(x,a')$ as an edge. Hence, by \cref{cycle-edge-(x_y)} there is a cycle containing $(a',x)$, a contradiction.
\end{proof}

Fix $u_0\in X$. For any $(x,y)\in X^2_e$ denote
\begin{align*}
	V_{xy}=\{v\in X\mid u_0\text{ and }v\text{ are on the opposite sides with respect to }(x,y)\}.
\end{align*}

\begin{lem}\label{z<w-and-V_xy}
	Let $(x,y)$ be extreme.
	\begin{enumerate}
		\item\label{z-in-V_xy-iff-w-in-V_xy} If $z<w$ and $(x,y)\ne (z,w)$, then $z\in V_{xy}\iff w\in V_{xy}$;
		\item\label{x-in-V_xy-iff-(x_y)<0} $x\in V_{xy}\iff\sgn_{u_0}(x,y)=-1$;
		\item\label{y-in-V_xy-iff-(x_y)>0} $y\in V_{xy}\iff\sgn_{u_0}(x,y)=1$.
	\end{enumerate}
\end{lem}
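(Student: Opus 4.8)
The plan is to recast the whole statement in terms of walks and the single edge $(x,y)$, using that $(x,y)$ being extreme means $x\in\Min(X)$, $y\in\Max(X)$, and no cycle of $X$ has both $x$ and $y$ among its vertices, so in particular no cycle has $(x,y)$ or $(y,x)$ as an edge.

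First I would record that the relation $\sim$ of \emph{being on the same side with respect to $(x,y)$} is an equivalence relation on $X$: reflexivity is immediate (the trivial one-vertex walk), symmetry holds because the inverse of a path avoiding $(x,y)$ and $(y,x)$ again avoids both edges, and transitivity holds because the composition of two such paths is a walk avoiding both edges, out of which a path avoiding both edges is extracted by deleting repeated vertices exactly as in the proof of \cref{cycle-edge-(x_y)}. Since $V_{xy}=\{v\in X\mid v\not\sim u_0\}$, for \cref{z-in-V_xy-iff-w-in-V_xy} it is enough to show $z\sim w$ whenever $z<w$ and $(z,w)\ne(x,y)$, for then $z\sim u_0\iff w\sim u_0$, i.e.\ $z\in V_{xy}\iff w\in V_{xy}$. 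To prove $z\sim w$, refine $z<w$ to a chain $z=z_0<z_1<\dots<z_n=w$ with $l(z_i,z_{i+1})=1$ for all $i$ (possible since $X$ is finite); this is a path from $z$ to $w$, none of whose edges is $(y,x)$ because $z_i<z_{i+1}$, and an edge $(z_i,z_{i+1})$ equal to $(x,y)$ would force $i=0$ (as $x\in\Min(X)$) and $i+1=n$ (as $y\in\Max(X)$), i.e.\ $n=1$ and $(z,w)=(x,y)$, contrary to hypothesis. Hence this path avoids $(x,y)$ and $(y,x)$, so $z\sim w$, which gives \cref{z-in-V_xy-iff-w-in-V_xy}.

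For \cref{x-in-V_xy-iff-(x_y)<0,y-in-V_xy-iff-(x_y)>0} I would split according to $\sgn_{u_0}(x,y)$, which is well defined by \cref{path-ending-at-(x_y)-existense,path-ending-at-(x_y)-uniqueness}, and carry out the case $\sgn_{u_0}(x,y)=1$ in full (the case $-1$ being entirely analogous after interchanging $x$ and $y$, hence the edges $(x,y)$, $(y,x)$, and the statements \cref{x-in-V_xy-iff-(x_y)<0}, \cref{y-in-V_xy-iff-(x_y)>0}). So assume $\sgn_{u_0}(x,y)=1$ and fix a path $\Omega: u_0,\dots,u_{m-1}=x,u_m=y$ ending at $(x,y)$; since $y=u_m$ occurs only once along $\Omega$, the edge $(y,x)$ does not occur in $\Omega$. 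On one hand, the truncation $\Omega_0: u_0,\dots,u_{m-1}=x$ is a path from $u_0$ to $x$ avoiding $(x,y)$ and $(y,x)$ — an edge $(u_i,u_{i+1})=(x,y)$ with $i\le m-2$ would give $u_i=x=u_{m-1}$, and $(u_i,u_{i+1})=(y,x)$ would give $u_{i+1}=x=u_{m-1}$, hence $u_{m-2}=y=u_m$, both impossible in a path — so $x\sim u_0$, i.e.\ $x\notin V_{xy}$, which is \cref{x-in-V_xy-iff-(x_y)<0} in this case. On the other hand, if some path $\G$ from $u_0$ to $y$ avoided $(x,y)$ and $(y,x)$, then $\Omega*\G\m$ would be a closed walk having $(x,y)$ as an edge (coming from $\Omega$) and not having $(y,x)$ as an edge (neither $\Omega$ nor $\G\m$ contains it), so by \cref{cycle-edge-(x_y)} there would be a cycle of $X$ with $(x,y)$ as an edge, contradicting that $(x,y)$ is extreme; hence $y\in V_{xy}$, which is \cref{y-in-V_xy-iff-(x_y)>0} in this case. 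Applying the same two arguments in the case $\sgn_{u_0}(x,y)=-1$ to a path ending at $(y,x)$ gives $y\notin V_{xy}$ and $x\in V_{xy}$, which finishes the proof.

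The step needing the most care is the repeated appeal to \cref{cycle-edge-(x_y)}: one must check that the closed walks built from $\Omega$ and $\G\m$ genuinely contain $(x,y)$ (resp.\ $(y,x)$) and genuinely avoid the opposite edge. This rests only on the elementary fact that each vertex occurs at most once in a path, so that the endpoints $x$ and $y$ of the chosen path $\Omega$ cannot recur along it; granting this bookkeeping, \cref{cycle-edge-(x_y),path-ending-at-(x_y)-existense,path-ending-at-(x_y)-uniqueness} supply everything else.
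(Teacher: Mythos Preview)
Your proof is correct and follows essentially the same route as the paper: part \cref{z-in-V_xy-iff-w-in-V_xy} hinges on the observation that a refined chain from $z$ to $w$ avoids the edge $(x,y)$ by maximality of $x<y$, and parts \cref{x-in-V_xy-iff-(x_y)<0,y-in-V_xy-iff-(x_y)>0} on the fact that any path from $u_0$ to $x$ (resp.\ $y$) containing the critical edge must end at it. Your packaging via the equivalence relation $\sim$ in \cref{z-in-V_xy-iff-w-in-V_xy} and the case split on $\sgn_{u_0}(x,y)$ in \cref{x-in-V_xy-iff-(x_y)<0,y-in-V_xy-iff-(x_y)>0} are minor reorganizations, not different ideas.
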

\begin{proof}
	\textit{\cref{z-in-V_xy-iff-w-in-V_xy}}. Assume that $z\in V_{xy}$. Let $\G:u_0,\dots,u_m=z$, where $(u_i,u_{i+1})\in\{(x,y),(y,x)\}$ for some (unique) $0\le i\le m-1$. Fix a chain $C:z=z_0<\dots<z_l=w$ with $l(z_j,z_{j+1})=1$ for all $0\le j\le l-1$. Observe that $(z_j,z_{j+1})\ne (x,y)$ for all $0\le j\le l-1$: whenever $l>1$, this is due to the maximality of the chain $x<y$, and whenever $l=1$, this is thanks to $(x,y)\ne (z,w)$. Obviously, $(z_j,z_{j+1})\ne (y,x)$ for all $0\le j\le l-1$, because $y>x$. Let $k=\min\{0\le s\le m\mid u_s\in C\}$ (which exists, because $u_m=z_0$) and $0\le j\le l$ such that $u_k=z_j$. If $0\le k\le i$, then $\G':u_0,\dots,u_k=z_j,z_{j-1},\dots z_0=z$ is a path not containing $(x,y)$ and $(y,x)$, a contradiction with $z\in V_{xy}$. If $i+1\le k\le m$, then $\G':u_0,\dots,u_k=z_j,z_{j+1},\dots,z_l=w$ is a path containing $(x,y)$ or $(y,x)$, so $w\in V_{xy}$. The implication $w\in V_{xy}\impl z\in V_{xy}$ is proved similarly.
	
	\textit{\cref{x-in-V_xy-iff-(x_y)<0}}. If $x\in V_{xy}$ and $\G:u_0,\dots,u_m=x$ a path with $(u_i,u_{i+1})\in\{(x,y),(y,x)\}$ for some (unique) $0\le i\le m-1$, then $i=m-1$ and $(u_i,u_{i+1})=(y,x)$, so $\sgn_{u_0}(x,y)=-1$. Conversely, if $\sgn_{u_0}(x,y)=-1$, then there is path starting at $u_0$ and ending at $(y,x)$, so $x\in V_{xy}$.
	
	\textit{\cref{y-in-V_xy-iff-(x_y)>0}}. The proof is similar to that of \cref{x-in-V_xy-iff-(x_y)<0}.
\end{proof}

\begin{defn}
	Let $\lb:X^2_e\to K$. Define the following multiplication on $I(X,K)$:
	\begin{align}
		e_x\cdot e_{xy}&=e_{xy}\cdot e_x=-e_{xy}\cdot e_y=-e_y\cdot e_{xy}=\lb(x,y)e_{xy},\text{ if }(x,y)\in X^2_e,\label{e_x-cdot-e_xy=-e_xy-cdot-y=lb_xy.e_xy}\\
		e_x\cdot e_y&=e_x\cdot e_y=
		\begin{cases}
			\sgn_{u_0}(x,y)\lb(x,y)e_{V_{xy}},& (x,y)\in X^2_e,\\
			-\sum_{(x,v)\in X^2_e}\sgn_{u_0}(x,v)\lb(x,v)e_{V_{xv}}, & x=y\in\Min(X),\\
			-\sum_{(u,x)\in X^2_e}\sgn_{u_0}(u,x)\lb(u,x)e_{V_{ux}}, & x=y\in\Max(X).
		\end{cases}\label{e_x-cdot-e_y=sums-with-lambdas}
	\end{align}
We call $\cdot$ a \textit{$\lb$-structure} on $I(X,K)$.
\end{defn}

\begin{rem}\label{e_x-cdot-dl=0}
	For any $\lb$-structure $\cdot$ and for all $x\in X$ we have $e_x\cdot\dl=\dl\cdot e_x=0$.
	
	Clearly, $\cdot$ is commutative, so it suffices to prove $e_x\cdot\dl=0$. If $x\not\in\Min(X)\sqcup \Max(X)$, then $e_x\cdot e_y=0$ for all $y\in X$, whence $e_x\cdot\dl=\sum e_x\cdot e_y=0$. If
	$x\in\Min(X)$, then $e_x\cdot\dl=e_x\cdot e_x+\sum_{(x,y)\in X^2_e} e_x\cdot e_y=0$ by \cref{e_x-cdot-e_y=sums-with-lambdas}. The case $x\in\Max(X)$ is analogous.
\end{rem}

\begin{lem}\label{lb-products-with-e_V_xy}
	Let $(x,y)\in X^2_e$ and $\cdot$ a $\lb$-structure.
	\begin{enumerate}
		\item\label{e_V_xy-cdot-e_zw=0} If $z<w$ and $(x,y)\ne (z,w)$, then $e_{V_{xy}}\cdot e_{zw}=0$.
		\item\label{e_V_xy-cdot-e_xy} $e_{V_{xy}}\cdot e_{xy}=-\sgn_{u_0}(x,y)\lb(x,y)e_{xy}$.
		\item\label{e_V_xy-cdot_lb-e_z=0} If $z\not\in\{x,y\}$, then $e_{V_{xy}}\cdot e_z=0$.
		\item\label{e_V_xy-cdot_lb-e_x-or-e_y}	$e_{V_{xy}}\cdot e_x=\sgn_{u_0}(x,y)\lb(x,y)e_{V_{xy}}=-e_{V_{xy}}\cdot e_y$. 
	\end{enumerate}
\end{lem}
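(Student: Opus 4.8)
The plan is to compute each of the four products directly from the defining formulas \cref{e_x-cdot-e_xy=-e_xy-cdot-y=lb_xy.e_xy} and \cref{e_x-cdot-e_y=sums-with-lambdas}, relating everything to the sets $V_{zw}$ via \cref{z<w-and-V_xy}. First I would expand $e_{V_{xy}}\cdot e_{zw}=\sum_{v\in V_{xy}}e_v\cdot e_{zw}$ and observe that each summand is nonzero only when $v\in\{z,w\}$. Thus $e_{V_{xy}}\cdot e_{zw}=0$ unless at least one of $z,w$ lies in $V_{xy}$; but by \cref{z<w-and-V_xy}\cref{z-in-V_xy-iff-w-in-V_xy}, since $(x,y)\ne(z,w)$, either both $z,w\in V_{xy}$ or both $z,w\notin V_{xy}$. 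In the first case $e_z\cdot e_{zw}+e_w\cdot e_{zw}=\lb(z,w)e_{zw}-\lb(z,w)e_{zw}=0$ by \cref{e_x-cdot-e_xy=-e_xy-cdot-y=lb_xy.e_xy} (noting this needs $(z,w)\in X^2_e$, and if $(z,w)\notin X^2_e$ the products vanish outright); in the second case there is nothing to sum. This gives \cref{e_V_xy-cdot-e_zw=0}.

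For \cref{e_V_xy-cdot-e_xy}, the same expansion $e_{V_{xy}}\cdot e_{xy}=\sum_{v\in V_{xy}}e_v\cdot e_{xy}$ again leaves only $v\in\{x,y\}$. By \cref{z<w-and-V_xy}\cref{x-in-V_xy-iff-(x_y)<0} and \cref{y-in-V_xy-iff-(x_y)>0}, exactly one of $x,y$ belongs to $V_{xy}$: it is $x$ when $\sgn_{u_0}(x,y)=-1$ and $y$ when $\sgn_{u_0}(x,y)=1$. In the first case the contribution is $e_x\cdot e_{xy}=\lb(x,y)e_{xy}=-\sgn_{u_0}(x,y)\lb(x,y)e_{xy}$, and in the second it is $e_y\cdot e_{xy}=-\lb(x,y)e_{xy}=-\sgn_{u_0}(x,y)\lb(x,y)e_{xy}$; both agree with the claimed formula.

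Part \cref{e_V_xy-cdot_lb-e_z=0} for $z\notin\{x,y\}$ is handled by expanding $e_{V_{xy}}\cdot e_z=\sum_{v\in V_{xy}}e_v\cdot e_z$ and applying \cref{e_x-cdot-e_y=sums-with-lambdas}; I expect this to be the main point of the lemma, since each term $e_v\cdot e_z$ can contribute a whole sum $e_{V_{vw}}$ or $e_{V_{wz}}$ over various extreme pairs, and one must show these cancel in pairs. The key tool will be \cref{at-most-one-edge-(a_x)}: for a fixed $v$ there is at most one $a<z$ (resp.\ $a>z$) with $(a,z)$ (resp.\ $(z,a)$) extreme, $u,v$ on opposite sides of it and a fixed sign, so the resulting $e_{V}$ terms can be matched up and shown to telescope to zero using \cref{z<w-and-V_xy}. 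Finally, \cref{e_V_xy-cdot_lb-e_x-or-e_y} follows by a similar expansion: using \cref{z<w-and-V_xy}\cref{x-in-V_xy-iff-(x_y)<0} and \cref{y-in-V_xy-iff-(x_y)>0} to determine which of $e_x,e_y$ the diagonal element $e_{V_{xy}}$ actually hits, then invoking \cref{e_x-cdot-e_y=sums-with-lambdas} with all but the $(x,y)$-term vanishing by an argument parallel to \cref{e_V_xy-cdot_lb-e_z=0}, and matching $\sgn_{u_0}$ factors against the case split. The hard part throughout will be the careful bookkeeping of signs $\sgn_{u_0}$ and opposite-side relations; the combinatorial lemmas \cref{cycle-edge-(x_y)}--\cref{z<w-and-V_xy} are designed precisely to make each such cancellation unambiguous, so the proof should reduce to invoking them in the right order.
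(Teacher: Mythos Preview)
Your arguments for parts \cref{e_V_xy-cdot-e_zw=0} and \cref{e_V_xy-cdot-e_xy} are correct and match the paper's proof essentially line for line.

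For parts \cref{e_V_xy-cdot_lb-e_z=0} and \cref{e_V_xy-cdot_lb-e_x-or-e_y}, however, you significantly overestimate the difficulty and reach for the wrong tool. The paper does \emph{not} use \cref{at-most-one-edge-(a_x)} here, nor any telescoping of $e_{V_{zw}}$-terms. The key observation you are missing is \cref{e_x-cdot-dl=0}: $e_z\cdot\dl=0$ for every $z\in X$. For \cref{e_V_xy-cdot_lb-e_z=0} with, say, $z\in\Min(X)$ and $z\ne x$, every $v$ with $e_z\cdot e_v\ne 0$ satisfies $(z,v)\ne(x,y)$, so by \cref{z<w-and-V_xy}\cref{z-in-V_xy-iff-w-in-V_xy} either none of them lies in $V_{xy}$ (sum empty) or all of them do, in which case $e_z\cdot e_{V_{xy}}=e_z\cdot\dl=0$. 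For \cref{e_V_xy-cdot_lb-e_x-or-e_y} with $\sgn_{u_0}(x,y)=1$, the same lemma gives that $y$ is the only element of $V_{xy}$ with $e_x\cdot e_v\ne 0$, so $e_x\cdot e_{V_{xy}}=e_x\cdot e_y$; and $e_{V_{xy}}\cdot e_y=(\dl-e_x)\cdot e_y=-e_x\cdot e_y$ again by \cref{e_x-cdot-dl=0}.

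Your pairing idea for \cref{e_V_xy-cdot_lb-e_z=0} would in fact go through (the contributions from $v=z$ and from each extreme partner $v=w$ of $z$ do cancel termwise, precisely because $z\in V_{xy}\iff w\in V_{xy}$), but this is really just an unpacking of $e_z\cdot\dl=0$, and \cref{at-most-one-edge-(a_x)} plays no role; that lemma is only needed later in the proof of \cref{lb-structure-is-TP}.
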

\begin{proof}
	\textit{\cref{e_V_xy-cdot-e_zw=0}}. If $z<w$ and $(x,y)\ne (z,w)$, then either $z,w\not\in V_{xy}$ or $z,w\in V_{xy}$ by \cref{z<w-and-V_xy}\cref{z-in-V_xy-iff-w-in-V_xy}. In the former case $e_v\cdot e_{zw}=0$ for all $v\in V_{xy}$ by \cref{e_x-cdot-e_xy=-e_xy-cdot-y=lb_xy.e_xy}, so $e_{V_{xy}}\cdot e_{zw}=\sum_{v\in V_{xy}}e_v\cdot e_{zw}=0$. In the latter case $e_{V_{xy}}\cdot e_{zw}=(e_z+e_w)\cdot e_{zw}=0$.
	
	\textit{\cref{e_V_xy-cdot-e_xy}}. If $\sgn_{u_0}(x,y)=1$, then $x\not\in V_{xy}$ and $y\in V_{xy}$ by \cref{x-in-V_xy-iff-(x_y)<0,y-in-V_xy-iff-(x_y)>0} of \cref{z<w-and-V_xy}, whence $e_{V_{xy}}\cdot e_{xy}=\sum_{v\in V_{xy}}e_v\cdot e_{xy}=e_y\cdot e_{xy}=-\lb(x,y)e_{xy}$ by \cref{e_x-cdot-e_xy=-e_xy-cdot-y=lb_xy.e_xy}. Otherwise $x\in V_{xy}$ and $y\not\in V_{xy}$, so $e_{V_{xy}}\cdot e_{xy}=e_x\cdot e_{xy}=\lb(x,y)e_{xy}$.
	
	\textit{\cref{e_V_xy-cdot_lb-e_z=0}}. If $z\not\in\Min(X)\sqcup \Max(X)$, the result is trivial. Let $z\in\Min(X)$. For any  $z<v$ we have $(z,v)\ne (x,y)$ because $z\ne x$. Therefore, either $z,v\not\in V_{xy}$ or $z,v\in V_{xy}$ by \cref{z<w-and-V_xy}\cref{z-in-V_xy-iff-w-in-V_xy}. In the former case $e_z\cdot e_v=0$ for all $v\in V_{xy}$, whence $e_z\cdot e_{V_{xy}}=\sum_{v\in V_{xy}}e_z\cdot e_v=0$. In the latter case $e_z\cdot e_{V_{xy}}=e_z\cdot e_z+\sum_{z\ne v} e_z\cdot e_v=e_z\cdot\dl=0$ by \cref{e_x-cdot-dl=0}. The case $z\in\Max(X)$ is similar.
	
	\textit{\cref{e_V_xy-cdot_lb-e_x-or-e_y}}. If $\sgn_{u_0}(x,y)=1$, then $x\not\in V_{xy}$ and $y\in V_{xy}$ by \cref{x-in-V_xy-iff-(x_y)<0,y-in-V_xy-iff-(x_y)>0} of \cref{z<w-and-V_xy}. For any $x<v\ne y$ we have $v\not\in V_{xy}$ by \cref{z<w-and-V_xy}\cref{z-in-V_xy-iff-w-in-V_xy}. Hence, $e_x\cdot e_{V_{xy}}=\sum_{v\in V_{xy}}e_x\cdot e_v=e_x\cdot e_y=\lb(x,y)e_{V_{xy}}$. Since $y\in V_{xy}$, then for any $x\ne z<y$ we have $z\in V_{xy}$ due to \cref{z<w-and-V_xy}\cref{z-in-V_xy-iff-w-in-V_xy}. Consequently, $e_{V_{xy}}\cdot e_y=e_y\cdot e_y+\sum_{z\not\in \{x,y\}} e_z\cdot e_y=(\dl-e_x)\cdot e_y=-e_x\cdot e_y=-\lb(x,y)e_{V_{xy}}$ thanks to \cref{e_x-cdot-dl=0}. The case $\sgn_{u_0}(x,y)=-1$ is similar.
\end{proof}

\begin{lem}\label{lb-structure-assoc}
	Any $\lb$-structure $\cdot$ is associative.
\end{lem}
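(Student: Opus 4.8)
It suffices to verify $(a\cdot b)\cdot c=a\cdot(b\cdot c)$ for $a,b,c$ in the basis $\{e_x\mid x\in X\}\cup\{e_{xy}\mid x<y\}$ of $I(X,K)$. The plan rests on two preliminary observations. First, as $\cdot$ is commutative (see \cref{e_x-cdot-dl=0}), the associator $(a,b,c):=(a\cdot b)\cdot c-a\cdot(b\cdot c)$ satisfies $(a,b,c)=-(c,b,a)$, which is checked in one line by applying commutativity a few times. Second, the products of basis elements of $I(X,K)$ not displayed in \cref{e_x-cdot-e_xy=-e_xy-cdot-y=lb_xy.e_xy,e_x-cdot-e_y=sums-with-lambdas} vanish; in particular $e_{xy}\cdot e_{zw}=0$ for all $x<y$, $z<w$, and the product of a scalar multiple of such an $e_{xy}$ with a scalar multiple of such an $e_{zw}$ is again $0$. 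It follows that every associator $(a,b,c)$ in which two of $a,b,c$ belong to $\gen{e_{xy}\mid x<y}$ equals $0$, and, invoking the symmetry $(a,b,c)=-(c,b,a)$, there remain exactly three patterns of basis triples to treat: $(e_x,e_{zw},e_t)$, $(e_x,e_y,e_{zw})$ and the diagonal triple $(e_x,e_y,e_z)$ (here $z<w$).

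For the first pattern I would use \cref{e_x-cdot-e_xy=-e_xy-cdot-y=lb_xy.e_xy} to write $e_u\cdot e_{zw}=\ve_u\lb(z,w)e_{zw}$, where $\ve_z=1$, $\ve_w=-1$, $\ve_u=0$ otherwise, and everything is $0$ unless $(z,w)\in X^2_e$. A one-line computation then shows that both $(e_x\cdot e_{zw})\cdot e_t$ and $e_x\cdot(e_{zw}\cdot e_t)$ equal $\ve_x\ve_t\lb(z,w)^2e_{zw}$.

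For the second pattern, $e_x\cdot e_y$ is, by \cref{e_x-cdot-e_y=sums-with-lambdas}, a linear combination of idempotents $e_{V_{ab}}$ with $(a,b)\in X^2_e$; using \cref{lb-products-with-e_V_xy}\cref{e_V_xy-cdot-e_zw=0,e_V_xy-cdot-e_xy} one evaluates $(e_x\cdot e_y)\cdot e_{zw}$, while $e_x\cdot(e_y\cdot e_{zw})$ is evaluated via \cref{e_x-cdot-e_xy=-e_xy-cdot-y=lb_xy.e_xy}. Both sides are supported on $e_{zw}$, vanish unless $(z,w)\in X^2_e$, and a short case check — according to whether $x=y$ and, if so, whether $x\in\Min(X)$ or $x\in\Max(X)$, using \cref{e_x-cdot-dl=0} in the degenerate subcases — shows that they coincide, each being $\pm\lb(z,w)^2e_{zw}$ with the same sign.

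The substantive case is the diagonal one. Writing $F(u,v,s):=(e_u\cdot e_v)\cdot e_s$, commutativity gives $e_x\cdot(e_y\cdot e_z)=(e_z\cdot e_y)\cdot e_x=F(z,y,x)$, so it suffices to prove $F(x,y,z)=F(z,y,x)$. By \cref{e_x-cdot-e_y=sums-with-lambdas} and \cref{lb-products-with-e_V_xy}\cref{e_V_xy-cdot_lb-e_z=0,e_V_xy-cdot_lb-e_x-or-e_y}, each of $F(x,y,z)$ and $F(z,y,x)$ is either $0$, a single term $\pm\lb(a,b)^2e_{V_{ab}}$, or (when $x=y$) a signed sum $\sum\pm\lb(a,b)^2e_{V_{ab}}$ over the extreme pairs through $x$; note every nonzero contribution carries the factor $\sgn_{u_0}(a,b)^2=1$, so the signs $\sgn_{u_0}$ cancel out entirely. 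I would then go through the cases $x=y=z$, $x=y\ne z$, and $x\ne y$ (the last subdivided according to whether $z=x$, $z=y$ or $z\notin\{x,y\}$), each time observing that a term can survive on either side only when $\{x,y,z\}$ lies in a single extreme pair $\{a,b\}$ together with its $\Min$- and $\Max$-endpoints, in which situation $F(x,y,z)$ and $F(z,y,x)$ both collapse to the same multiple of $e_{V_{ab}}$. The recurring mechanism is that $e_u\cdot e_u$ for $u\in\Min(X)$ (resp.\ $u\in\Max(X)$) unfolds into the sum over the extreme pairs $(u,v)$ (resp.\ $(v,u)$), and that the identity $e_{V_{ab}}\cdot e_a=-e_{V_{ab}}\cdot e_b$ of \cref{lb-products-with-e_V_xy}\cref{e_V_xy-cdot_lb-e_x-or-e_y} supplies exactly the sign needed for the two sides to match. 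I expect the only real difficulty to be organisational: keeping track of which of $x,y,z$ is the minimal and which the maximal endpoint of the relevant extreme pair; once the $\sgn_{u_0}$ factors are removed by squaring, this bookkeeping is routine, and the equality $F(x,y,z)=F(z,y,x)$ then follows in every case, so $\cdot$ is associative.
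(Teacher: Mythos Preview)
Your proposal is correct and follows essentially the same route as the paper: a case-by-case verification of associativity on basis triples, with \cref{lb-products-with-e_V_xy} doing all the real work in evaluating products involving $e_{V_{ab}}$. The only organizational difference is that you make the associator identity $(a,b,c)=-(c,b,a)$ explicit up front to prune cases, whereas the paper uses it implicitly (e.g.\ by never treating $(e_{xy},e_u,e_v)$ or $(e_x,e_y,e_x)$ separately); your diagonal case $F(x,y,z)=F(z,y,x)$ then reproduces exactly the paper's Cases~4 and~5.
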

\begin{proof}
	Clearly, it suffices to consider only the products $(f\cdot g)\cdot h$ and $f\cdot (g\cdot h)$ of basis elements involving at most one $e_{xy}$ with $x<y$, in which case $(x,y)$ can (and will) be assumed to be extreme.
	
	\textit{Case 1.} $(f,g,h)=(e_u,e_{xy},e_v)$. If $\{u,v\}\not\sst\{x,y\}$, then $(f\cdot g)\cdot h=0=f\cdot(g\cdot h)$. Moreover, $(f\cdot g)\cdot h=-\lb(x,y)^2e_{xy}=f\cdot(g\cdot h)$ for $\{u,v\}=\{x,y\}$. The case $u=v\in\{x,y\}$ is trivial.
	
	\textit{Case 2.} $(f,g,h)=(e_u,e_v,e_{xy})$ with $u\ne v$. If $(u,v)\not\in X^2_e$ or $(v,u)\not\in X^2_e$, then $(f\cdot g)\cdot h=0=f\cdot (g\cdot h)$ by \cref{e_x-cdot-e_xy=-e_xy-cdot-y=lb_xy.e_xy,e_x-cdot-e_y=sums-with-lambdas}. Let $(u,v)\in X^2_e$. Then $(f\cdot g)\cdot h=\sgn_{u_0}(u,v)\lb(u,v)e_{V_{uv}}\cdot e_{xy}$. If $(u,v)\ne (x,y)$, then $e_{V_{uv}}\cdot e_{xy}=0$ by \cref{lb-products-with-e_V_xy}\cref{e_V_xy-cdot-e_zw=0}. Hence, $(f\cdot g)\cdot h=0$, which obviously coincides with $f\cdot (g\cdot h)$. For $(u,v)=(x,y)$ we have $(f\cdot g)\cdot h=\sgn_{u_0}(x,y)\lb(x,y)e_{V_{xy}}\cdot e_{xy}=-\lb(x,y)^2 e_{xy}$ by \cref{lb-products-with-e_V_xy}\cref{e_V_xy-cdot-e_xy}. Clearly, $f\cdot (g\cdot h)=-\lb(x,y)e_x\cdot e_{xy}=-\lb(x,y)^2 e_{xy}$ by \cref{e_x-cdot-e_xy=-e_xy-cdot-y=lb_xy.e_xy}. The case $(v,u)\in X^2_e$ is symmetric.
	
	\textit{Case 3.} $(f,g,h)=(e_u,e_u,e_{xy})$. The case $u\not\in\Min(X)\sqcup \Max(X)$ is trivial. Let $u\in\Min(X)$. If $u\ne x$, then $(x,y)\ne(u,v)$ for any $(u,v)\in X^2_e$, so $e_{V_{uv}}\cdot e_{xy}=0$ by \cref{lb-products-with-e_V_xy}\cref{e_V_xy-cdot-e_zw=0}, whence $(f\cdot g)\cdot h=0$ by \cref{e_x-cdot-e_y=sums-with-lambdas}, which coincides with $f\cdot (g\cdot h)$. As to the case $u=x$, we have $e_{V_{xv}}\cdot e_{xy}=0$ for all $(x,v)\in X^2_e\setminus\{(x,y)\}$ by \cref{lb-products-with-e_V_xy}\cref{e_V_xy-cdot-e_zw=0}. Therefore, $(f\cdot g)\cdot h=-\sgn_{u_0}(x,y)\lb(x,y)e_{V_{xy}}\cdot e_{xy}=\lb(x,y)^2e_{xy}$ by \cref{lb-products-with-e_V_xy}\cref{e_V_xy-cdot-e_xy} and \cref{e_x-cdot-e_y=sums-with-lambdas}. This coincides with $f\cdot (g\cdot h)$. The case $u\in\Max(X)$ is similar.
	
	\textit{Case 4.} $(f,g,h)=(e_x,e_y,e_z)$ with $x,y,z$ pairwise distinct. Assume that $f\cdot g\ne 0$. It follows that $(x,y)\in X^2_e$ or $(y,x)\in X^2_e$. Let $(x,y)\in X^2_e$ (otherwise commute $e_x$ and $e_y$). Then $(f\cdot g)\cdot h=\sgn_{u_0}(x,y)\lb(x,y) e_{V_{xy}}\cdot e_z=0$ by \cref{lb-products-with-e_V_xy}\cref{e_V_xy-cdot_lb-e_z=0}. By the same argument $f\cdot (g\cdot h)=(g\cdot h)\cdot f=0$.
	
	\textit{Case 5.} $(f,g,h)=(e_x,e_x,e_y)$ with $x\ne y$. Assume that $(x,y)\in X^2_e$, so we have $f\cdot (g\cdot h)=\sgn_{u_0}(x,y)\lb(x,y)e_x\cdot e_{V_{xy}}=\lb(x,y)^2e_{V_{xy}}$ by \cref{lb-products-with-e_V_xy}\cref{e_V_xy-cdot_lb-e_x-or-e_y}. As to $(f\cdot g)\cdot h$, for any extreme $(x,u)\in X^2_e\setminus\{(x,y)\}$ we have $e_{V_{xu}}\cdot e_y=0$ by \cref{lb-products-with-e_V_xy}\cref{e_V_xy-cdot_lb-e_z=0}. So $(f\cdot g)\cdot h=-\sgn_{u_0}(x,y)\lb(x,y)e_{V_{xy}}\cdot e_y=\lb(x,y)^2e_{V_{xy}}$ by \cref{e_x-cdot-e_y=sums-with-lambdas} and \cref{lb-products-with-e_V_xy}\cref{e_V_xy-cdot_lb-e_x-or-e_y}. The case $(y,x)\in X^2_e$ similarly gives $(f\cdot g)\cdot h=-\lb(y,x)^2e_{V_{yx}}=f\cdot (g\cdot h)$. It remains to consider the situation $(x,y),(y,x)\not\in X^2_e$, so that $f\cdot (g\cdot h)=0$. The non-trivial case is $x\in\Min(X)\sqcup\Max(X)$. If $x\in\Min(X)$, then $y\ne u$ for all $(x,u)\in X^2_e$, since otherwise $(x,y)=(x,u)\in X^2_e$. Thanks to \cref{lb-products-with-e_V_xy}\cref{e_V_xy-cdot_lb-e_z=0}, this implies that $e_{V_{xu}}\cdot e_y=0$, whence $(f\cdot g)\cdot h=0$ by \cref{e_x-cdot-e_y=sums-with-lambdas}. The case $x\in\Max(X)$ is analogous.
\end{proof}

\begin{lem}\label{lb-structure-is-TP}
		Any $\lb$-structure $\cdot$ is a transposed Poisson structure on $I(X,K)$ orthogonal to any structure of Poisson type.
\end{lem}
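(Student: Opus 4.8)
The plan is to apply \cref{glavlem}. The defining products \cref{e_x-cdot-e_xy=-e_xy-cdot-y=lb_xy.e_xy}--\cref{e_x-cdot-e_y=sums-with-lambdas} of a $\lb$-structure are written symmetrically, so $\cdot$ is commutative, and it is associative by \cref{lb-structure-assoc}; hence it remains to prove that for every $z\in I(X,K)$ the operator $L_z\colon f\mapsto z\cdot f$ is a $\frac12$-derivation of $(I(X,K),[\cdot,\cdot])$, i.e.\ that \cref{Trans-Leibniz} holds for all $x,y,z$. As \cref{Trans-Leibniz} is $K$-trilinear in $(x,y,z)$, I would verify it on the basis $\{e_w\mid w\in X\}\cup\{e_{ab}\mid a<b\}$. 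If $(a,b)\notin X^2_e$ then $L_{e_{ab}}=0$ by \cref{e_x-cdot-e_xy=-e_xy-cdot-y=lb_xy.e_xy}--\cref{e_x-cdot-e_y=sums-with-lambdas}, so only $z=e_w$ and $z=e_{xy}$ with $(x,y)\in X^2_e$ survive; recall that the latter forces $x\in\Min(X)$ and $y\in\Max(X)$, whence $e_{xy}\in Z([I(X,K),I(X,K)])$. Finally, among pairs of basis elements the only nonzero commutators are, up to sign, $[e_a,e_{ab}]=e_{ab}$, $[e_{ab},e_b]=e_{ab}$ and $[e_{ab},e_{bc}]=e_{ac}$.

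For $z=e_{xy}$ with $(x,y)\in X^2_e$ the operator $L_{e_{xy}}$ annihilates $[I(X,K),I(X,K)]$ and maps $D(X,K)$ into $\gen{e_{xy}}\sst Z([I(X,K),I(X,K)])$; since every commutator lies in $[I(X,K),I(X,K)]$, the left-hand side of \cref{Trans-Leibniz} is $0$ and the right-hand side collapses to $[e_{xy}\cdot e_a,e_b]+[e_a,e_{xy}\cdot e_b]$, which vanishes by a one-line identity in the Kronecker deltas describing the Lie product between $D(X,K)$ and $[I(X,K),I(X,K)]$. For $z=e_w$ the operator $L_{e_w}$ maps $D(X,K)$ into $D(X,K)$, annihilates $e_{cd}$ when $(c,d)\notin X^2_e$, and multiplies $e_{cd}$ by $\lb(c,d)$, $-\lb(c,d)$ or $0$ when $(c,d)\in X^2_e$ and $w$ equals $c$, equals $d$, or is neither. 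Combining this with the list of nonzero commutators, the fact that $[D(X,K),D(X,K)]=0$, and the fact (\cref{z-in-V_xy-iff-w-in-V_xy} of \cref{z<w-and-V_xy}) that for a non-extreme pair $(a,b)$ any diagonal element $e_w\cdot e_c$ takes equal values at $a$ and at $b$, one sees that the only instances of \cref{Trans-Leibniz} not reducing to $0=0$ are those with $[x,y]=[e_a,e_{ab}]=e_{ab}$ or $[x,y]=[e_{ab},e_b]=e_{ab}$ for $(a,b)\in X^2_e$. In those one substitutes the products from \cref{e_x-cdot-e_xy=-e_xy-cdot-y=lb_xy.e_xy}--\cref{e_x-cdot-e_y=sums-with-lambdas} and uses $[e_{V_{ab}},e_{ab}]=-\sgn_{u_0}(a,b)e_{ab}$ (which follows from \cref{x-in-V_xy-iff-(x_y)<0,y-in-V_xy-iff-(x_y)>0} of \cref{z<w-and-V_xy}) together with $[e_{V_{xy}},e_{ab}]=0$ for $(x,y)\ne(a,b)$ (from \cref{z-in-V_xy-iff-w-in-V_xy} of \cref{z<w-and-V_xy}); the subcases $w=a$, $w=b$ and $w\notin\{a,b\}$ then all match, the two factors of $\sgn_{u_0}(a,b)$ cancelling.

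The main obstacle is purely organisational: one must isolate exactly which basis commutators give a nontrivial instance of \cref{Trans-Leibniz}, and, in those, observe that of the many $e_{V_{xy}}$-summands forming $e_w\cdot e_w$ or $e_w\cdot e_a$ only the one with index $(a,b)$ survives after bracketing with $e_{ab}$; with that in hand each check is a short computation.

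For the orthogonality with a structure $\cdot'$ of Poisson type \cref{e_x-cdot-e_y=mu_xy.dl}: by \cref{e_x-cdot-dl=0} and bilinearity $\dl\in\Ann(I(X,K),\cdot)$, hence $I(X,K)\cdot' I(X,K)\sst Z(I(X,K))=\gen{\dl}\sst\Ann(I(X,K),\cdot)$. Conversely, every $\cdot$-product of basis elements lies in $\gen{e_{ab}\mid(a,b)\in X^2_e}\sst[I(X,K),I(X,K)]$ or is a $K$-combination of the diagonal elements $e_{V_{xy}}$, and, $\cdot'$ annihilating $[I(X,K),I(X,K)]$ by \cref{e_x-cdot-e_y=mu_xy.dl} and each such $e_{V_{xy}}$ by a direct check, we get $I(X,K)\cdot I(X,K)\sst\Ann(I(X,K),\cdot')$. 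Thus $\cdot$ is orthogonal to $\cdot'$, and in particular $\cdot+\cdot'$ is a transposed Poisson structure on $I(X,K)$.
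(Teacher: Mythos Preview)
Your argument for the transposed-Poisson property follows a genuinely different route from the paper's. You verify \cref{Trans-Leibniz} directly on basis pairs, reducing via \cref{z<w-and-V_xy} to the instances with $[f,g]=e_{ab}$ for $(a,b)\in X^2_e$, and then checking those by hand using $[e_{V_{ab}},e_{ab}]=-\sgn_{u_0}(a,b)e_{ab}$ and $[e_{V_{xy}},e_{ab}]=0$ for $(x,y)\ne(a,b)$. The paper instead shows that each left-multiplication $e_x\cdot-$ coincides with the $\frac12$-derivation $\vf_{\sg_x}$ of \cref{defn-of-tau_sg_0_c} for a suitable $\sg_x:X^2_<\to K$ (vanishing off extreme pairs through $x$), computing $\vf_{\sg_x}(e_y)$ case by case from the walk formula \cref{vf(e_x)(u_m_u_m)=vf(e_x)(u_0_u_0)+s-and-t} and matching against \cref{e_x-cdot-e_y=sums-with-lambdas}; and that $e_{xy}\cdot-$ for extreme $(x,y)$ is the inner $\frac12$-derivation $\ad_{-\lb(x,y)e_{xy}}$. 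Your approach is lighter and self-contained; the paper's is longer but reuses the machinery of \cref{sec-TP-on-T_n} and yields the explicit identification $e_x\cdot-=\vf_{\sg_x}$ that is precisely what gets inverted in the ``only if'' part of \cref{descr-TP-on-T(X_K)}.

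There is, however, a genuine gap in your orthogonality paragraph. You assert that each $e_{V_{xy}}$ lies in $\Ann(I(X,K),\cdot')$ ``by a direct check'', but for a Poisson-type product $e_a\cdot' e_b=\mu(a,b)\dl$ one computes
\[
e_{V_{xy}}\cdot' e_z=\Big(\sum_{v\in V_{xy}}\mu(v,z)\Big)\dl,
\]
and nothing forces this sum to vanish. Already on $X=\{1<2\}$ with $\mu\equiv 1$ (which satisfies the associativity constraint) one has $e_{V_{12}}=e_2$ and $e_2\cdot' e_1=\dl\ne 0$. So the inclusion $I(X,K)\cdot I(X,K)\sst\Ann(I(X,K),\cdot')$ fails in general, and the two structures are \emph{not} orthogonal in the sense of the paper's definition. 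Note that the paper's own justification invokes only \cref{e_x-cdot-dl=0}, which gives the other inclusion $I(X,K)\cdot' I(X,K)\sst\gen{\dl}\sst\Ann(I(X,K),\cdot)$; it does not address --- and cannot establish --- the direction you attempted.
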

\begin{proof}
	The product $\cdot$ is commutative by definition and associative by \cref{lb-structure-assoc}. It is also orthogonal to any product of the form \cref{e_x-cdot-e_y=mu_xy.dl} due to \cref{e_x-cdot-dl=0}. In view of \cref{glavlem}, it remains to prove that $e_{xy}\cdot-$ is a $\frac 12$-derivation of $I(X,K)$ for all $x\le y$.
	
	For any $x\in X$ define $\sg_x:X^2_<\to K$ as follows:
	\begin{align}\label{sg_x(u_v)=lb_xv-or-lb_ux}
		\sg_x(u,v)=
		\begin{cases}
			\lb(x,v), & x=u\text{ and }(x,v)\in X^2_e,\\
			-\lb(u,x), & x=v\text{ and }(u,x)\in X^2_e,\\
			0, & \text{otherwise}.
		\end{cases}
	\end{align}
Observe that $\sg_x$ is constant on chains in $X$ (in fact, it is zero on any chain of length $>1$). Since no cycle in $X$ contains an extreme pair as an edge, $\sg_x$ is constant (in fact, zero) on cycles in $X$. 

Consider $\vf_{\sg_x}$ as in \cref{defn-of-tau_sg_0_c}. Then, clearly, for all $u<v$
\begin{align*}
	\vf_{\sg_x}(e_{uv})=\sg_x(u,v)e_{uv}&=
	\begin{cases}
		\lb(x,v)e_{xv}, & x=u\text{ and }(x,v)\in X^2_e,\\
		-\lb(u,x)e_{ux}, & x=v\text{ and }(u,x)\in X^2_e,\\
		0, & \text{otherwise}.
	\end{cases}
\\
&=e_x\cdot e_{uv}.
\end{align*}

 Take $x,y$ in $X$. If $x\not\in\Min(X)\sqcup\Max(X)$ or $y\not\in\Min(X)\sqcup\Max(X)$, then $\sg_x(u,y)=\sg_x(y,v)=0$ for all $u<y<v$ by \cref{sg_x(u_v)=lb_xv-or-lb_ux}. Consequently, $s^\pm_{\sg_x,\G}(y)=t^\pm_{\sg_x,\G}(y)=0$ for any walk $\G$. Hence, by \cref{vf(e_x)(u_m_u_m)=vf(e_x)(u_0_u_0)+s-and-t}
 \begin{align*}
	\vf_{\sg_x}(e_y)=0=e_x\cdot e_y,\text{ if }x\not\in\Min(X)\sqcup\Max(X)\text{ or }y\not\in \Min(X)\sqcup\Max(X).
\end{align*}

Let $x\in\Min(X)$ and $y\in\Max(X)$. Given $v\in X$, consider a walk $\G:u_0,\dots,u_m=v$ from $u_0$ to $v$. Without loss of generality assume that $\G$ has at most one edge of the form $(x,y)$ or $(y,x)$ (otherwise remove a closed subwalk from $\G$). We have $s^\pm_{\sg_x,\G}(y)=0$, because $y\in\Max(X)$. Furthermore, by \cref{sg_x(u_v)=lb_xv-or-lb_ux}
\begin{align*}
	t^+_{\sg_x,\G}(y)&=\sum_{y=u_i>u_{i+1}} \sg_x(u_{i+1},y)=\sum_{y=u_i>u_{i+1}=x} \sg_x(x,y), \\ 
	t^-_{\sg_x,\G}(y)&=\sum_{u_i<u_{i+1}=y} \sg_x(u_i,y)=\sum_{x=u_i<u_{i+1}=y} \sg_x(x,y),
\end{align*}
so by \cref{vf(e_x)(u_m_u_m)=vf(e_x)(u_0_u_0)+s-and-t,sg_x(u_v)=lb_xv-or-lb_ux}
\begin{align*}
	\vf_{\sg_x}(e_y)(v,v)=
	\begin{cases}
		\lb(x,y), & (x,y)\in X^2_e\text{ and }\exists i: (u_i,u_{i+1})=(x,y),\\
		-\lb(x,y), & (x,y)\in X^2_e\text{ and }\exists i: (u_i,u_{i+1})=(y,x),\\
		0, & \text{otherwise}.
	\end{cases}
\end{align*}
It follows that
\begin{align*}
	\vf_{\sg_x}(e_y)&=
	\begin{cases}
		\sgn_{u_0}(x,y)\lb(x,y)e_{V_{xy}},& (x,y)\in X^2_e,\\
		0, & \text{otherwise}.
	\end{cases}\\
&=e_x\cdot e_y.
\end{align*}

Let $x\in\Max(X)$ and $y\in\Min(X)$. Considering a walk $\G:u_0,\dots,u_m=v$ from $u_0$ to $v$ as above, we have $t^\pm_{\sg_x,\G}(y)=0$, while
\begin{align*}
	s^+_{\sg_x,\G}(y)&=\sum_{y=u_i<u_{i+1}} \sg_x(y,u_{i+1})=\sum_{y=u_i<u_{i+1}=x} \sg_x(y,x), \\ 
	s^-_{\sg_x,\G}(y)&=\sum_{u_i>u_{i+1}=y} \sg_x(y,u_i)=\sum_{x=u_i>u_{i+1}=y} \sg_x(y,x),
\end{align*}
so by \cref{vf(e_x)(u_m_u_m)=vf(e_x)(u_0_u_0)+s-and-t,sg_x(u_v)=lb_xv-or-lb_ux}
\begin{align*}
	\vf_{\sg_x}(e_y)(v,v)=
	\begin{cases}
		\lb(y,x), & (y,x)\in X^2_e\text{ and }\exists i: (u_i,u_{i+1})=(y,x),\\
		-\lb(y,x), & (y,x)\in X^2_e\text{ and }\exists i: (u_i,u_{i+1})=(x,y),\\
		0, & \text{otherwise}.
	\end{cases}
\end{align*}
It follows that
\begin{align*}
	\vf_{\sg_x}(e_y)&=
	\begin{cases}
		\sgn_{u_0}(y,x)\lb(y,x)e_{V_{yx}},& (y,x)\in X^2_e,\\
		0, & \text{otherwise}.
	\end{cases}\\
	&=e_x\cdot e_y.
\end{align*}

Now consider the case $x,y\in\Max(X)$. For any walk $\G$ from $u_0$ to $v$ we have $s^\pm_{\sg_x,\G}(y)=0$, because $y\in\Max(X)$. If $x\ne y$, then
\begin{align*}
	t^+_{\sg_x,\G}(y)=\sum_{y=u_i>u_{i+1}} \sg_x(u_{i+1},y)=0\text{ and }t^-_{\sg_x,\G}(y)=\sum_{u_i<u_{i+1}=y} \sg_x(u_i,y)=0
\end{align*}
by \cref{sg_x(u_v)=lb_xv-or-lb_ux}, because $x$ is not comparable with $y$. Consequently,
\begin{align*}
	\vf_{\sg_x}(e_y)=0=e_x\cdot e_y,\text{ if }x,y\in \Max(X),\ x\ne y.
\end{align*}
For the case $x=y$ write
\begin{align*}
	t^+_{\sg_x,\G}(x)=\sum_{x=u_i>u_{i+1}} \sg_x(u_{i+1},x)\text{ and }t^-_{\sg_x,\G}(x)=\sum_{u_i<u_{i+1}=x} \sg_x(u_i,x).
\end{align*}
We may assume that $x$ appears among the vertices of $\G$ at most once (otherwise remove the corresponding closed subwalk of $\G$). If $x=u_0=x_1$, then $t^+_{\sg_x,\G}(x)=\sg_x(u_1,x)=-\lb(u_1,x)$ and $t^-_{\sg_x,\G}(x)=0$. If $x=u_m=v$, then $t^+_{\sg_x,\G}(x)=0$ and $t^-_{\sg_x,\G}(x)=\sg_x(u_{m-1},x)=-\lb(u_{m-1},x)$. If $x=u_i$ with $0<i<m$, then $t^+_{\sg_x,\G}(x)=\sg_x(u_{i+1},x)=-\lb(u_{i+1},x)$ and $t^-_{\sg_x,\G}(x)=\sg_x(u_{i-1},x)=-\lb(u_{i-1},x)$. Thus,
\begin{align}\label{vf_sg_x(e_x)(v_v)}
	\vf_{\sg_x}(e_x)(v,v)=
	\begin{cases}
		\lb(u_1,x), & x=u_0=x_1,\\
		-\lb(u_{m-1},x), & x=u_m=v,\\
		\lb(u_{i+1},x)-\lb(u_{i-1},x), & x=u_i,\ 0<i<m,\\
		0, & \text{otherwise}.
	\end{cases}
\end{align}
Clearly, it suffices to consider only extreme pairs in \cref{vf_sg_x(e_x)(v_v)}. Observe that the sign of $\lb(u_k,x)$ in \cref{vf_sg_x(e_x)(v_v)} coincides with $-\sgn_{u_0}(u_k,x)$. By \cref{at-most-one-edge-(a_x)} each $v\in X$ belongs to at most one $V_{zx}$, where $(z,x)\in X^2_e$ and $\sgn_{u_0}(z,x)=1$, and to at most one $V_{wx}$, where $(w,x)\in X^2_e$ and $\sgn_{u_0}(w,x)=-1$. Consequently,
\begin{align*}
	\vf_{\sg_x}(e_x)=-\sum\sgn_{u_0}(u,x)\lb(u,x)e_{V_{ux}}=e_x\cdot e_x.
\end{align*}

Finally, let $x,y\in\Min(X)$. Then $t^\pm_{\sg_x,\G}(y)=0$. If $x\ne y$, then
\begin{align*}
	s^+_{\sg_x,\G}(y)=\sum_{y=u_i<u_{i+1}} \sg_x(y,u_{i+1})\text{ and }s^-_{\sg_x,\G}(y)=\sum_{u_i>u_{i+1}=y} \sg_x(y,u_i)=0,
\end{align*}
so
\begin{align*}
	\vf_{\sg_x}(e_y)=0=e_x\cdot e_y,\text{ if }x,y\in \Min(X),\ x\ne y.
\end{align*}
Furthermore,
\begin{align*}
	s^+_{\sg_x,\G}(x)=\sum_{x=u_i<u_{i+1}} \sg_x(x,u_{i+1})\text{ and }s^-_{\sg_x,\G}(x)=\sum_{u_i>u_{i+1}=x} \sg_x(x,u_i).
\end{align*}
It follows that
\begin{align*}
	\vf_{\sg_x}(e_x)(v,v)=
	\begin{cases}
		-\lb(x,u_1), & x=u_0=x_1,\\
		\lb(x,u_{m-1}), & x=u_m=v,\\
		-\lb(x,u_{i+1})+\lb(x,u_{i-1}), & x=u_i,\ 0<i<m,\\
		0, & \text{otherwise}.
	\end{cases}
\end{align*}
Consequently,
\begin{align*}
	\vf_{\sg_x}(e_x)=-\sum\sgn_{u_0}(x,v)\lb(x,v)e_{V_{xv}}=e_x\cdot e_x.
\end{align*}

Thus, we have proved that $e_x\cdot -$ coincides with $\vf_{\sg_x}\in\Dl(I(X,K))$. Clearly, by \cref{e_x-cdot-e_xy=-e_xy-cdot-y=lb_xy.e_xy}, for any $(x,y)\in X^2_e$ the map $e_{xy}\cdot -$ is $\ad_{c_{xy}}\in\Dl(I(X,K))$, where $c_{xy}=-\lb(x,y)e_{xy}$. For $(x,y)\not\in X^2_e$ the map $e_{xy}\cdot -$ is zero.
\end{proof}

We need some auxiliary equalities analogous to that of \cref{lb-products-with-e_V_xy}, but for a mutational structure.
\begin{lem}\label{mu-products-with-e_V_xy}
	Let $(x,y)\in X^2_e$ and $\cdot$ a mutational structure.
	\begin{enumerate}
		\item\label{e_V_xy-cdot_mu-e_z=0} If $z\not\in\{x,y\}$, then $e_{V_{xy}}\cdot e_z=0$.
		\item\label{e_V_xy-cdot_mu-e_x-or-e_y}	$e_{V_{xy}}\cdot e_x=\sgn_{u_0}(x,y)\nu(x,y)e_{xy}=-e_{V_{xy}}\cdot e_y$. 
	\end{enumerate}
\end{lem}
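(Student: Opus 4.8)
The plan is to transcribe the proof of \cref{lb-products-with-e_V_xy}\cref{e_V_xy-cdot_lb-e_z=0,e_V_xy-cdot_lb-e_x-or-e_y} almost verbatim, replacing the products of a $\lb$-structure by those of the mutational structure defined by \cref{e_x-cdot-e_y=-nu_xy.e_xy}. The one preliminary I would record first is the mutational analogue of \cref{e_x-cdot-dl=0}: for every $z\in X$ one has $e_z\cdot\dl=\dl\cdot e_z=0$. This is immediate, since $e_z\cdot\dl=[[e_z,\nu],\dl]=0$ because $\dl$ is the identity of $I(X,K)$ and hence central.

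For part \cref{e_V_xy-cdot_mu-e_z=0} I would argue by cases on $z$. If $z\notin\Min(X)\sqcup\Max(X)$, then $e_v\cdot e_z=0$ for every $v\in X$ by \cref{e_x-cdot-e_y=-nu_xy.e_xy}, so the claim is trivial. For $z\in\Min(X)$ (the case $z\in\Max(X)$ being dual) I would use that $(z,w)\ne(x,y)$ for any $w>z$ since $z\ne x$, so \cref{z<w-and-V_xy}\cref{z-in-V_xy-iff-w-in-V_xy} yields $z\in V_{xy}\iff w\in V_{xy}$. If $z\notin V_{xy}$, this forces $w\notin V_{xy}$ for all $w>z$, so every summand of $e_{V_{xy}}\cdot e_z$ vanishes; if $z\in V_{xy}$, then $V_{xy}$ contains every $v$ for which $e_v\cdot e_z\ne0$, whence $e_{V_{xy}}\cdot e_z=\sum_{v\in X}e_v\cdot e_z=\dl\cdot e_z=0$ by the preliminary.

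For part \cref{e_V_xy-cdot_mu-e_x-or-e_y} I would first note that $(x,y)\in X^2_e$ forces $x\in\Min(X)$ and $y\in\Max(X)$. Taking $\sgn_{u_0}(x,y)=1$ (the case $\sgn_{u_0}(x,y)=-1$ being entirely analogous), parts \cref{x-in-V_xy-iff-(x_y)<0,y-in-V_xy-iff-(x_y)>0} of \cref{z<w-and-V_xy} give $x\notin V_{xy}$ and $y\in V_{xy}$, while \cref{z<w-and-V_xy}\cref{z-in-V_xy-iff-w-in-V_xy} shows that among the $w>x$ only $w=y$ lies in $V_{xy}$ and among the $v<y$ only $v=x$ lies outside $V_{xy}$. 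Hence $e_{V_{xy}}\cdot e_x=e_y\cdot e_x=\nu(x,y)e_{xy}$, and $e_{V_{xy}}\cdot e_y=(\dl-e_x)\cdot e_y=-e_x\cdot e_y=-\nu(x,y)e_{xy}$ using the preliminary, which is precisely the asserted identity since $\sgn_{u_0}(x,y)=1$.

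I do not expect a real obstacle; the whole argument is a routine mirror of the $\lb$-structure computation. The only point that needs a little care is keeping track of which of $x$ and $y$ belongs to $V_{xy}$ --- determined entirely by \cref{z<w-and-V_xy} --- together with the use of the centrality of $\dl$ to absorb a ``missing'' summand into $\dl\cdot e_z$, respectively $\dl\cdot e_y$.
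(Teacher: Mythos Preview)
Your proof is correct and follows essentially the same route as the paper: both proofs simply replay the argument of \cref{lb-products-with-e_V_xy}\cref{e_V_xy-cdot_lb-e_z=0,e_V_xy-cdot_lb-e_x-or-e_y} with the $\lb$-products replaced by the mutational ones. The only cosmetic difference is in how $e_z\cdot\dl=0$ is justified---the paper invokes the orthogonality of mutational structures to structures of Poisson type, whereas you compute $[[e_z,\nu],\dl]=0$ directly from the centrality of $\dl$; both are immediate.
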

\begin{proof}
	\textit{\cref{e_V_xy-cdot_mu-e_z=0}}. The proof is the same as that of \cref{lb-products-with-e_V_xy}\cref{e_V_xy-cdot_lb-e_z=0}, but we do not need \cref{e_x-cdot-dl=0} to justify $e_z\cdot\dl=0$, because the latter holds by the orthogonality of $\cdot$ to a structure of Poisson type.
	
	\textit{\cref{e_V_xy-cdot_mu-e_x-or-e_y}}. As in the proof of \cref{lb-products-with-e_V_xy}\cref{e_V_xy-cdot_lb-e_x-or-e_y}, whenever $\sgn_{u_0}(x,y)=1$, we have $e_{V_{xy}}\cdot e_x=e_x\cdot e_y=\nu(x,y)e_{xy}$ and $e_{V_{xy}}\cdot e_y=(\dl-e_x)\cdot e_y=-e_x\cdot e_y=-\nu(x,y)e_{xy}$. The case $\sgn_{u_0}(x,y)=-1$ is similar.
\end{proof}

\begin{lem}\label{lb+nu-structure-is-TP}
	The sum of any mutational structure and any $\lb$-structure is a transposed Poisson structure on $I(X,K)$.
\end{lem}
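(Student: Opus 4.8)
We apply \cref{glavlem} to $*=\cdot_1+\cdot_2$, where $\cdot_1$ denotes the given mutational structure (attached to some $\nu\in Z([I(X,K),I(X,K)])$ via \cref{e_x-cdot-e_y=-nu_xy.e_xy}) and $\cdot_2$ the given $\lb$-structure. It is clearly commutative, and for each $z\in I(X,K)$ the map $z*-=z\cdot_1-+z\cdot_2-$ is a sum of two $\frac 12$-derivations of $(I(X,K),[\cdot,\cdot])$ --- the summands are $\frac 12$-derivations by \cref{glavlem}, since $\cdot_1$ is a transposed Poisson structure by construction and $\cdot_2$ is one by \cref{lb-structure-is-TP} --- hence itself a $\frac 12$-derivation. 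So the only thing left to prove is that $*$ is associative. Expanding $(f*g)*h-f*(g*h)$ and using that $\cdot_1$ is associative with all triple products equal to $0$ together with the associativity of $\cdot_2$ from \cref{lb-structure-assoc}, associativity of $*$ becomes the identity
\[(f\cdot_1 g)\cdot_2 h+(f\cdot_2 g)\cdot_1 h=f\cdot_1(g\cdot_2 h)+f\cdot_2(g\cdot_1 h),\quad f,g,h\in I(X,K).\]

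The plan is first to reduce this to a one-sided form. If $(f\cdot_1 g)\cdot_2 h=f\cdot_1(g\cdot_2 h)$ held for all $f,g,h$, then, by commutativity of $\cdot_1$ and $\cdot_2$, one would also get $(f\cdot_2 g)\cdot_1 h=h\cdot_1(g\cdot_2 f)=(h\cdot_1 g)\cdot_2 f=f\cdot_2(g\cdot_1 h)$, and the displayed identity is the sum of these two; so it suffices to prove $(f\cdot_1 g)\cdot_2 h=f\cdot_1(g\cdot_2 h)$ on basis elements. If $f=e_{ab}$ or $g=e_{ab}$ with $a<b$, then $f\cdot_1 g=0$ (as $e_{ab}\cdot_1-=0$, since $[e_{ab},\nu]=0$) and, when $g=e_{ab}$, also $g\cdot_2 h\in\gen{e_{ab}}$ is annihilated by $f\cdot_1-$, so both sides vanish; likewise, if $h=e_{cd}$ with $c<d$, then $f\cdot_1 g\in Z([I(X,K),I(X,K)])$ multiplies $e_{cd}$ to $0$ under $\cdot_2$, while $g\cdot_2 e_{cd}\in\gen{e_{cd}}$ is killed by $f\cdot_1-$. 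Hence it only remains to establish
\[(e_a\cdot_1 e_b)\cdot_2 e_c=e_a\cdot_1(e_b\cdot_2 e_c)\]
for all $a,b,c\in X$.

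To prove this last equality I would argue by cases according to whether each of $a,b,c$ lies in $\Min(X)$, in $\Max(X)$, or in neither, and according to which pairs among $a,b,c$ are extreme. The mechanism is uniform: $e_a\cdot_1 e_b$ always lies in $Z([I(X,K),I(X,K)])=\gen{e_{pq}\mid \Min(X)\ni p<q\in\Max(X)}$, and on this subspace $\cdot_2$ acts by $e_{pq}\cdot_2-=\ad_{-\lb(p,q)e_{pq}}$ when $(p,q)\in X^2_e$ and by $0$ otherwise (a direct check from \cref{e_x-cdot-e_xy=-e_xy-cdot-y=lb_xy.e_xy,e_x-cdot-e_y=sums-with-lambdas}); on the other side, $e_b\cdot_2 e_c$ is a $K$-combination of diagonal elements $e_{V_{xy}}$ with $(x,y)\in X^2_e$, and $e_a\cdot_1 e_{V_{xy}}$ is computed by \cref{mu-products-with-e_V_xy} (it equals $\pm\sgn_{u_0}(x,y)\nu(x,y)e_{xy}$ when $a\in\{x,y\}$ and vanishes otherwise), exactly mirroring the behaviour of $\lb$-structures recorded in \cref{lb-products-with-e_V_xy}. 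Substituting the explicit formulas \cref{e_x-cdot-e_y=-nu_xy.e_xy,e_x-cdot-e_xy=-e_xy-cdot-y=lb_xy.e_xy,e_x-cdot-e_y=sums-with-lambdas} and tracking membership in the sets $V_{xy}$ by means of \cref{z<w-and-V_xy}, the two sides are seen to coincide in each case. I expect the main obstacle to be exactly this case bookkeeping: the signs $\sgn_{u_0}(x,y)$, the possible coincidences among $a,b,c$, and the rigidity coming from extremeness (notably, an extreme pair is never an edge of a cycle) all interact, so the case list is long even though each individual case is forced and routine.
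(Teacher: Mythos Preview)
Your proposal is correct and follows essentially the same route as the paper: reduce to associativity of the sum, discard the off-diagonal basis elements, and verify the mixed associativity on the $e_a,e_b,e_c$ by expanding via \cref{e_x-cdot-e_y=-nu_xy.e_xy,e_x-cdot-e_y=sums-with-lambdas} together with \cref{lb-products-with-e_V_xy,mu-products-with-e_V_xy}. The one point where you are slightly more economical than the paper is the observation that, by commutativity, it suffices to check the single identity $(f\cdot_1 g)\cdot_2 h=f\cdot_1(g\cdot_2 h)$ rather than both mixed associativities separately; the paper verifies both in each of its Cases~1--4, so your reduction halves that bookkeeping but otherwise the case analysis you anticipate is exactly what the paper carries out.
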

\begin{proof}
	To distinguish the two structures, denote a mutational structure by $\cdot_\nu$ and a $\lb$-structure by $\cdot_\lb$. We only need to prove that their sum is associative. Since both of the structures are themselves associative, it suffices to establish the equalities of the mixed products of basis elements: 
	\begin{align}\label{(f-cdot_lb-g)-cdot_mu-h=f-cdot_lb-(g-cdot_mu-h)}
		(f\cdot_\lb g)\cdot_\mu h=f\cdot_\lb (g\cdot_\mu h)\text{ and }(f\cdot_\mu g)\cdot_\lb h=f\cdot_\mu (g\cdot_\lb h).
	\end{align} If $e_{xy}\in\{f,g,h\}$ with $x<y$, then all the products in \cref{(f-cdot_lb-g)-cdot_mu-h=f-cdot_lb-(g-cdot_mu-h)} are zero.
	
	\textit{Case 1.} $(f,g,h)=(e_x,e_y,e_z)$ with $x,y,z$ pairwise distinct. Assume that $f\cdot_\lb g\ne 0$. Then either $(x,y)\in X^2_e$ or $(y,x)\in X^2_e$. If $(x,y)\in X^2_e$, then $(f\cdot_\lb g)\cdot_\mu h=\sgn_{u_0}(x,y)\lb(x,y)e_{V_{xy}}\cdot_\mu e_z=0$ by \cref{mu-products-with-e_V_xy}\cref{e_V_xy-cdot_mu-e_z=0}. The case $(y,x)\in X^2_e$ similarly yields $(f\cdot_\lb g)\cdot_\mu h=0$. Assume that $g\cdot_\mu h\ne 0$ and let $\Min(X)\ni y<z\in\Max(X)$ (otherwise interchange $y$ and $z$). Then $f\cdot_\lb (g\cdot_\mu h)=\nu(y,z)e_x\cdot_\lb e_{yz}=0$. By the same argument $(f\cdot_\mu g)\cdot_\lb h=h\cdot_\lb(f\cdot_\mu g)=0=(g\cdot_\lb h)\cdot_\mu f=f\cdot_\mu (g\cdot_\lb h)$.
	
	\textit{Case 2.} $(f,g,h)=(e_x,e_x,e_y)$ with $x\ne y$. It suffices to consider $x\in\Min(X)\sqcup\Max(X)$, otherwise all the products in \cref{(f-cdot_lb-g)-cdot_mu-h=f-cdot_lb-(g-cdot_mu-h)} are zero. Let $x\in\Min(X)$. If $x\nless y$ or $y\not\in\Max(X)$, then $e_{V_{xv}}\cdot_\mu e_y=0$ for all $(x,v)\in X^2_e$ by \cref{mu-products-with-e_V_xy}\cref{e_V_xy-cdot_mu-e_z=0}. Therefore, $(f\cdot_\lb g)\cdot_\mu h=0$. Obviously, $f\cdot_\lb (g\cdot_\mu h)=0$, as $g\cdot_\mu h=0$. Similarly, in this case $e_{xv}\cdot_\lb e_y=0$ for all $x<v$ implies $(f\cdot_\mu g)\cdot_\lb h=0$, while $f\cdot_\mu (g\cdot_\lb h)=0$ is due to $g\cdot_\lb h=0$. Let $x<y\in\Max(X)$. Then $(f\cdot_\lb g)\cdot_\mu h=-\sgn_{u_0}(x,y)\lb(x,y)e_{V_{xy}}\cdot_\mu e_y=\lb(x,y)\nu(x,y)e_{xy}$ by \cref{mu-products-with-e_V_xy}\cref{e_V_xy-cdot_mu-e_x-or-e_y} and $f\cdot_\lb (g\cdot_\mu h)=\nu(x,y)e_x\cdot_\lb e_{xy}=\lb(x,y)\nu(x,y)e_{xy}$. It is easily seen that $(f\cdot_\mu g)\cdot_\lb h=-\nu(x,y)e_{xy}\cdot_\lb e_y=\lb(x,y)\nu(x,y)e_{xy}$ and $f\cdot_\mu (g\cdot_\lb h)=\sgn_{u_0}(x,y)\lb(x,y)e_x\cdot_\mu e_{V_{xy}}=\lb(x,y)\nu(x,y)e_{xy}$ by \cref{mu-products-with-e_V_xy}\cref{e_V_xy-cdot_mu-e_x-or-e_y}. Whenever $x\in\Max(X)$, one sees that all the products in \cref{(f-cdot_lb-g)-cdot_mu-h=f-cdot_lb-(g-cdot_mu-h)} are equal to $-\lb(y,x)\nu(y,x)e_{yx}$, if $\Min(X)\ni y<x$, or $0$, otherwise.
	
	\textit{Case 3.} $(f,g,h)=(e_x,e_y,e_x)$ with $x\ne y$. By commutativity, we reduce the $4$ products \cref{(f-cdot_lb-g)-cdot_mu-h=f-cdot_lb-(g-cdot_mu-h)} to those from Case 2, where all of them are equal.
	
	\textit{Case 4.} $(f,g,h)=(e_x,e_x,e_x)$. If $x\in\Min(X)$, then by \cref{mu-products-with-e_V_xy}\cref{e_V_xy-cdot_mu-e_x-or-e_y} we have $(f\cdot_\lb g)\cdot_\mu h=-\sum_{(x,v)\in X^2_e}\sgn_{u_0}(x,v)\lb(x,v)e_{V_{xv}}\cdot_\mu e_x=-\sum_{(x,v)\in X^2_e}\lb(x,v)\nu(x,v)e_{xv}$. On the other hand, $f\cdot_\lb (g\cdot_\mu h)=-\sum_{x<v\in\Max(X)}\nu(x,v)e_x\cdot_\lb e_{xv}=-\sum_{(x,v)\in X^2_e}\lb(x,v)\nu(x,v)e_{xv}$. The equality $(f\cdot_\mu g)\cdot_\lb h=f\cdot_\mu (g\cdot_\lb h)$ follows by commutativity. If $x\in\Max(X)$, then all the products in \cref{(f-cdot_lb-g)-cdot_mu-h=f-cdot_lb-(g-cdot_mu-h)} are equal to $\sum_{(u,x)\in X^2_e}\lb(u,x)\nu(u,x)e_{ux}$.
\end{proof}

\begin{thrm}\label{descr-TP-on-T(X_K)}
	Let $\ch(K)=0$ and $X$ be connected with $|X|\ge 2$. A binary operation $\cdot$ on $I(X,K)$ is a transposed Poisson algebra structure on $I(X,K)$ if and only if $\cdot$ is the sum of a structure of Poisson type, a mutational structure and a $\lb$-structure.
\end{thrm}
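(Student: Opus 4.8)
Prove the two implications separately. For sufficiency, combine the three ``pairwise'' results already available and upgrade them to the triple sum by a formal identity. For necessity, decompose every left multiplication $L_z=z\cdot(-)$ via \cref{descr-half-der-I(X_K)}, read off the three global parameters $\mu$, $\nu$, $\lb$ from this decomposition, check they define structures of the three announced types, and verify that $\cdot$ coincides with their sum.

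\textbf{Sufficiency.} By \cref{mutational+Poisson-TP}, by \cref{lb-structure-is-TP} (which says a $\lb$-structure is orthogonal to any structure of Poisson type, so their sum is a transposed Poisson structure), and by \cref{lb+nu-structure-is-TP}, the sum of any two of the three listed types is a transposed Poisson structure, hence associative; each of the three is itself associative (the mutational structure and the $\lb$-structure by \cref{lb-structure-assoc}, a Poisson type structure by definition). Now subtract from the associativity of $\cdot_i+\cdot_j$ the associativities of $\cdot_i$ and of $\cdot_j$; this yields the mixed identity $(a\cdot_i b)\cdot_j c+(a\cdot_j b)\cdot_i c=a\cdot_i(b\cdot_j c)+a\cdot_j(b\cdot_i c)$. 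Summing these over the three pairs $\{i,j\}$ and adding back the three individual associativities gives exactly the associativity of $\cdot_1+\cdot_2+\cdot_3$. Commutativity and \cref{Trans-Leibniz} are immediate for a sum, so the triple sum is a transposed Poisson structure.

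\textbf{Necessity: setup.} Fix $u_0\in X$ and let $\cdot$ be a transposed Poisson structure. By \cref{glavlem} each $L_z$ is a $\frac12$-derivation, so by \cref{descr-half-der-I(X_K)} it decomposes uniquely as $L_z=\ad_{c(z)}+\vf_{\sg(z)}+\vf_{\kp(z)}$, and by uniqueness $z\mapsto c(z),\sg(z),\kp(z)$ are $K$-linear. The key auxiliary observation, read off this decomposition and used repeatedly, is: \emph{a $\frac12$-derivation $\psi$ of $I(X,K)$ with $\psi(D(X,K))\sst[I(X,K),I(X,K)]$ has $\sg_\psi=0$ and $\kp_\psi=0$} --- because $\psi(e_x)=[c,e_x]+\vf_{\sg_\psi}(e_x)+\kp_\psi(x)\dl$ with the last two terms in $D(X,K)$ and $D(X,K)\cap[I(X,K),I(X,K)]=\{0\}$, so $\vf_{\sg_\psi}(e_x)+\kp_\psi(x)\dl=0$; evaluating at $u_0$ gives $\kp_\psi=0$, hence $\vf_{\sg_\psi}|_{D(X,K)}=0$, hence $\sg_\psi=0$ by \cref{vf(e_x)(u_u)-vf(e_x)(v_v)-in-terms-of-sg}. (If moreover $\psi|_{D(X,K)}=0$ then also $[c,e_x]=0$ for all $x\in\Min(X)$ forces $c=0$, so $\psi=0$.)

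\textbf{Necessity: the three parameters.} For $x<y$ and any $z\in X$ one has $e_z\cdot e_{xy}=\sg(e_z)(x,y)e_{xy}$, since the $\ad$-part of $L_{e_z}$ kills $e_{xy}\in[I(X,K),I(X,K)]$ and its central part kills $e_{xy}$ as $x\ne y$. Hence $L_{e_{xy}}$ maps $D(X,K)$ into $\gen{e_{xy}}\sst[I(X,K),I(X,K)]$; the auxiliary observation gives $\sg(e_{xy})=\kp(e_{xy})=0$, so $L_{e_{xy}}=\ad_{d_{xy}}$ with $d_{xy}\in Z([I(X,K),I(X,K)])$, and in particular $e_{xy}\cdot e_{uv}=0$ for all $x<y,u<v$. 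Commutativity now reads $[d_{xy},e_z]=\sg(e_z)(x,y)e_{xy}$ for all $z$, which forces $d_{xy}$ to be supported on the single pair $(x,y)$ and thus to vanish unless $\Min(X)\ni x<y\in\Max(X)$; since each $\sg(e_z)$ is constant on chains and cycles, $d_{xy}=0$ unless $(x,y)\in X^2_e$, and we set $\lb(x,y):=-d_{xy}(x,y)$ for $(x,y)\in X^2_e$. Next, for $x\in X$ commutativity together with these facts about the $d_{xy}$ shows that $\sg(e_x)$ equals the map $\sg_x$ of \cref{sg_x(u_v)=lb_xv-or-lb_ux} built from $\lb$, that the constraints $[c(e_x),e_y]=[c(e_y),e_x]$ pin down a single $\nu\in Z([I(X,K),I(X,K)])$ with $c(e_x)=[e_x,\nu]$, and that $\mu(x,y):=\kp(e_x)(y)=(e_x\cdot e_y)(u_0,u_0)$ is symmetric. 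Hence $e_x\cdot e_y=e_x\cdot_\nu e_y+e_x\cdot_\lb e_y+\mu(x,y)\dl$, where $\cdot_\nu$ is the mutational structure of $\nu$ and $\cdot_\lb$ the $\lb$-structure. Combined with the values of $\cdot$ on off-diagonal generators, this shows $\cdot$ agrees on every pair of basis elements with $\cdot_\nu+\cdot_\lb+\cdot_\mu$, where $\cdot_\mu$ is the commutative extension of $\mu$ by \cref{e_x-cdot-e_y=mu_xy.dl}; therefore $\cdot=\cdot_\nu+\cdot_\lb+\cdot_\mu$. Finally, $\cdot_\mu$ is a genuine Poisson type structure: using $\dl\in\Ann(I(X,K),\cdot_\nu)\cap\Ann(I(X,K),\cdot_\lb)$ (the latter by \cref{e_x-cdot-dl=0}) and $I(X,K)\cdot_\mu I(X,K)\sst\gen{\dl}$, the associativity of $\cdot$ applied to $(e_x\cdot e_y)\cdot e_z$, with the vanishing cross terms discarded and the $(u_0,u_0)$-coefficient isolated, collapses to $\mu(x,y)\sum_v\mu(z,v)=\mu(y,z)\sum_v\mu(x,v)$.

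\textbf{Main obstacle.} The technical heart is the middle part of the necessity step: showing that the local data $\{d_{xy}\}$, $\{c(e_x)\}$, $\{\sg(e_x)\}$ coming from the decompositions assemble into single global parameters $\lb\colon X^2_e\to K$ and $\nu\in Z([I(X,K),I(X,K)])$ of exactly the prescribed shapes. This demands a case analysis according to whether the relevant vertices lie in $\Min(X)$, in $\Max(X)$, or neither, together with systematic use of the explicit descriptions of $Z(I(X,K))$, $Z([I(X,K),I(X,K)])$ and of the constant-on-chains-and-cycles condition; in particular one must rule out contributions of $\lb$ along maximal chains that lie on a cycle. Extracting the associativity identity for $\mu$ at the end is comparatively routine once the cross terms are seen to vanish.
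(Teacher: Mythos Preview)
Your necessity argument follows the paper's route closely: decompose each left multiplication via \cref{descr-half-der-I(X_K)}, show $L_{e_{xy}}=\ad_{c_{xy}}$ for $x<y$ by the auxiliary observation, pin $c_{xy}$ down to a multiple of $e_{xy}$ supported on $X^2_e$, identify $\sg(e_x)$ with the map $\sg_x$ of \cref{sg_x(u_v)=lb_xv-or-lb_ux}, and read off $\nu$ and $\mu$ from commutativity. Your sufficiency argument via the formal associator identity is more explicit than the paper's one-line citation of \cref{mutational+Poisson-TP,lb-structure-is-TP,lb+nu-structure-is-TP}, and it is correct.

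There is, however, a genuine error in your final step. You claim that the cross terms between $\cdot_\mu$ and $\cdot_\lb$ vanish, but only half of them do: while $(e_x\cdot_\mu e_y)\cdot_\lb e_z=0$ because $e_x\cdot_\mu e_y\in\gen{\dl}$ and $\dl\in\Ann(\cdot_\lb)$, the reverse term $(e_x\cdot_\lb e_y)\cdot_\mu e_z$ need not vanish, since $e_x\cdot_\lb e_y$ is typically $\pm\lb(x,y)\,e_{V_{xy}}\in D(X,K)$, not a multiple of $\dl$, and then $e_{V_{xy}}\cdot_\mu e_z=\big(\sum_{v\in V_{xy}}\mu(v,z)\big)\dl$ is in general nonzero. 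Nor does isolating the $(u_0,u_0)$-coefficient kill this term. In fact $\cdot_\mu$ need not be individually associative: for $X=\{1<2\}$ with $u_0=1$, $\nu=0$, $\lb(1,2)=1$, $\mu(1,1)=0$, $\mu(1,2)=1$, $\mu(2,2)=-2$, the sum $\cdot_\mu+\cdot_\lb$ is a transposed Poisson structure on $I(X,K)$, yet $(e_1\cdot_\mu e_1)\cdot_\mu e_2=0\ne\dl=e_1\cdot_\mu(e_1\cdot_\mu e_2)$. The paper's proof makes no such claim: it stops after exhibiting $\cdot$ in the additive form \cref{e_xy-cdot-e_uv=0,e_xy-cdot-e_u=0,e_xy-cdot-e_y=lb_xy.e_xy,e_x-cdot-e_y=mu_xy.dl-x-not-min-or-max,(e_x-cdot-e_y)_J=nu-structure,(e_x-cdot-e_y)_D=lb-structure}, treating ``structure of Poisson type'' as the shape \cref{e_x-cdot-e_y=mu_xy.dl} rather than as an individually associative product. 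You should simply drop the final paragraph.
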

\begin{proof}
	\textit{The ``if'' part.} A consequence of \cref{mutational+Poisson-TP,lb-structure-is-TP,lb+nu-structure-is-TP}.
	
	\textit{The ``only if'' part.} Let $\cdot$ be a transposed Poisson algebra structure on $I(X,K)$. By \cref{descr-half-der-I(X_K),glavlem} for all $x\le y$ in $X$ there exist $c_{xy}\in Z([I(X,K),I(X,K)])$, $\sg_{xy}:X^2_<\to K$ (constant on chains and cycles in $X$) and $\kp_{xy}:X\to K$, such that
	\begin{align*}
		e_{xy}\cdot e_{uv}=\ad_{c_{xy}}(e_{uv})+\vf_{\sg_{xy}}(e_{uv})+\vf_{\kp_{xy}}(e_{uv}).
	\end{align*}

	Let $x<y$ and $u\in X$. Then $e_u\cdot e_{xy}=\sg_{uu}(x,y)e_{xy}$, while $e_{xy}\cdot e_u=[c_{xy},e_u]+\vf_{\sg_{xy}}(e_u)+\kp_{xy}(u)\dl$, where $\vf_{\sg_{xy}}(e_u)+\kp_{xy}(u)\dl\in D(X,K)$. Hence $\vf_{\sg_{xy}}(e_u)=-\kp_{xy}(u)\dl\in Z(I(X,K))$, which due to \cref{vf_sg(e_x)(u_0_u_0)=0} yields
	\begin{align*}
		\vf_{\sg_{xy}}|_{D(X,K)}=0\text{ and }\kp_{xy}=0\text{ for }x<y.
	\end{align*}
It follows from \cref{vf(D)-sst-Z<=>vf([I_I])=0} that $\sg_{xy}(u,v)e_{uv}=\vf_{\sg_{xy}}(e_{uv})=0$ for all $u<v$, that is,
\begin{align*}
	\sg_{xy}=0\text{ for }x<y.
\end{align*}
As a consequence,
\begin{align}\label{e_xy-cdot-e_uv=0}
	e_{xy}\cdot e_{uv}=0\text{ for }x<y\text{ and }u<v.
\end{align}
	Assuming, moreover, that $u\not\in\Min(X)\sqcup\Max(X)$, we have $[c_{xy},e_u]=0$, so
	\begin{align}\label{sg_uu=0-u-not-min-or-max}
		\sg_{uu}=0\text{ for }u\not\in\Min(X)\sqcup\Max(X).
	\end{align}
	Now take $x<y$ and $u\in\Min(X)$. Then $e_u\cdot e_{xy}=\sg_{uu}(x,y)e_{xy}$, while $e_{xy}\cdot e_u=-\sum_{u<v}c_{xy}(u,v)e_{uv}$. Similarly, for any $v\in\Max(X)$ we have $e_v\cdot e_{xy}=\sg_{vv}(x,y)e_{xy}$, while $e_{xy}\cdot e_v=\sum_{u<v}c_{xy}(u,v)e_{uv}$. Hence, for all $x<y$,
	\begin{align}
		\sg_{uu}(x,y)&=0,\text{ unless }x\in\Min(X),\ y\in\Max(X)\text{ and }u\in\{x,y\},\label{sg_uu(x_y)=0}\\
		c_{xy}(u,v)&=0,\text{ unless }x\in\Min(X),\ y\in\Max(X)\text{ and }(u,v)=(x,y),\label{c_xy(u_v)=0}\\
		\sg_{xx}(x,y)&=-c_{xy}(x,y)=-\sg_{yy}(x,y),\text{ if }x\in\Min(X)\text{ and }y\in\Max(X).\notag
	\end{align}
If there exists $x<z<y$, then $\sg_{xx}(x,y)=\sg_{xx}(x,z)=0$ by \cref{sg_uu(x_y)=0} and the fact that $\sg_{xx}$ is constant on chains in $X$. Moreover, if $l(x,y)=1$ and there exists a cycle containing $x<y$, then by \cref{cycle-containing-x-and-y} there is a cycle $u_0,\dots,u_m=u_0$ having $(x,y)$ as an edge, so choosing $0\le i<m$ with $x\not\in\{u_i,u_{i+1}\}$ (this is possible, because a cycle has at least $4$ distinct elements) and applying \cref{sg_uu(x_y)=0} we again get $\sg_{xx}(x,y)=\sg_{xx}(u_i,u_{i+1})=0$, if $u_i<u_{i+1}$, or $\sg_{xx}(x,y)=\sg_{xx}(u_{i+1},u_i)=0$, if $u_i>u_{i+1}$, because $\sg_{xx}$ is constant on cycles in $X$. Therefore, the condition ``$x\in\Min(X)$ and $y\in\Max(X)$'' in \cref{sg_uu(x_y)=0,c_xy(u_v)=0} can be replaced by a stronger one: ``$(x,y)\in X^2_e$''. As a consequence, denoting 
\begin{align*}
	\lb(x,y):=\sg_{xx}(x,y),\ \text{if }(x,y)\in X^2_e,
\end{align*}
we have
\begin{align}
	e_{xy}\cdot e_u&=0\text{ for }x<y,\text{ unless }(x,y)\in X^2_e\text{ and }u\in\{x,y\},\label{e_xy-cdot-e_u=0}\\
	e_x\cdot e_{xy}&=-e_{xy}\cdot e_y=\lb(x,y)e_{xy},\text{ if }(x,y)\in X^2_e.\label{e_xy-cdot-e_y=lb_xy.e_xy}
\end{align}

Now let us study $e_x\cdot e_y$. To this end, given $x,y\in X$, introduce
\begin{align*}
	\mu(x,y):=\kp_{xx}(y),
\end{align*}
Let $x\not\in\Min(X)\sqcup\Max(X)$. In view of \cref{sg_uu=0-u-not-min-or-max} we have $\vf_{\sg_{xx}}(e_{uv})=0$ for all $u<v$, so that $\vf_{\sg_{xx}}(e_y)\in Z(I(X,K))$ for all $y\in X$ by \cref{vf(D)-sst-Z<=>vf([I_I])=0}. Thanks to \cref{vf_sg(e_x)(u_0_u_0)=0}, we conclude that $\vf_{\sg_{xx}}(e_y)=0$, so
\begin{align}\label{e_x-cdot-e_y=mu_xy.dl-x-not-min-or-max}
	e_x\cdot e_y=\vf_{\kp_{xx}}(e_y)=\mu(x,y)\dl,\text{ if }x\not\in \Min(X)\sqcup\Max(X)\text{ or }y\not\in \Min(X)\sqcup\Max(X).
\end{align}

Let $x\in\Min(X)$ and $y\in\Max(X)$. It follows from $e_x\cdot e_y=e_y\cdot e_x$ that $\sum_{u<y}c_{xx}(u,y)e_{uy}=[c_{xx},e_y]=[c_{yy},e_x]=-\sum_{x<v}c_{yy}(x,v)e_{xv}$ and $(\vf_{\sg_{xx}}+\vf_{\kp_{xx}})(e_y)=(\vf_{\sg_{yy}}+\vf_{\kp_{yy}})(e_x)$. Hence,
\begin{align}
	c_{xx}(u,v)&=0\text{ for }u\ne x,\ c_{yy}(u,v)=0\text{ for }v\ne y,\label{c_xx(u_v)=c_yy(u_v)=0}\\
	c_{xx}(x,y)&=-c_{yy}(x,y),\text{ if }x<y.\label{c_xx(x_y)=-c_yy(x_y)}
\end{align}
Denoting 
\begin{align*}
	\nu:=\sum_{x<y}c_{xx}(x,y)e_{xy},
\end{align*}
by \cref{c_xx(u_v)=c_yy(u_v)=0,c_xx(x_y)=-c_yy(x_y)} we have for all $x,y\in X$:
\begin{align}\label{(e_x-cdot-e_y)_J=nu-structure}
	(e_x\cdot e_y)_J=[c_{xx},e_y]=[[e_x,\nu],e_y].
\end{align}

It remains to determine $(e_x\cdot e_y)_D=\vf_{\sg_{xx}}(e_y)+\vf_{\kp_{xx}}(e_y)$. Observe that $\vf_{\kp_{xx}}(e_y)=\mu(x,y)\dl$, while $\vf_{\sg_{xx}}(e_y)$ can be calculated using \cref{e_x-cdot-e_y=sums-with-lambdas}, because $\sg_{xx}$ has exactly the same form as $\sg_x$ defined in \cref{sg_x(u_v)=lb_xv-or-lb_ux}. Thus,
\begin{align}\label{(e_x-cdot-e_y)_D=lb-structure}
	(e_x\cdot e_y)_D=(e_y\cdot e_x)_D=
	\begin{cases}
		\mu(x,y)\dl+\sgn_{u_0}(x,y)\lb(x,y)e_{V_{xy}},& (x,y)\text{ is extreme},\\
		\mu(x,y)\dl-\sum_{x<v}\sgn_{u_0}(x,v)\lb(x,v)e_{V_{xv}}, & x=y\in\Min(X),\\
		\mu(x,y)\dl-\sum_{u<x}\sgn_{u_0}(u,x)\lb(u,x)e_{V_{ux}}, & x=y\in\Max(X),\\
		\mu(x,y)\dl, & \text{otherwise}.
	\end{cases}
\end{align}
Combining \cref{e_xy-cdot-e_uv=0,e_xy-cdot-e_u=0,e_xy-cdot-e_y=lb_xy.e_xy,e_x-cdot-e_y=mu_xy.dl-x-not-min-or-max,(e_x-cdot-e_y)_J=nu-structure,(e_x-cdot-e_y)_D=lb-structure}, we get the desired form of the structure $\cdot$.
\end{proof}

\begin{rem}
	The automorphism $\phi\in\Aut(I(X,K),[\cdot,\cdot])$ from \cref{nu_xy-0-or-1} does not affect the Poisson and $\lb$-structures from the decomposition of $\cdot$, so one can assume that $\nu\in Z([I(X,K),I(X,K)])$ that defines the mutational structure in \cref{descr-TP-on-T(X_K)} satisfies $\nu(x,y)\in\{0,1\}$ for all $\Min(X)\ni x<y\in\Max(X)$.
\end{rem}

\begin{exm}
	Let $X=\{1,\dots,n\}$ be a chain. 
	
	If $n>2$, then $X^2_e=\emptyset$, so any transposed Poisson structure on $(I(X,K),[\cdot,\cdot])$ is (isomorphic to) the sum of a structure of Poisson type $e_i\cdot e_j=\mu(i,j)\dl$ and a mutational structure $e_i\cdot e_j=[[e_i,\nu],e_j]$ with $\nu=\nu(1,n)e_{1n}$, namely: $e_1\cdot e_1=-e_1\cdot e_n=e_n\cdot e_n=-\nu(1,n)e_{1n}$, where $\nu(1,n)\in\{0,1\}$. Thus, up to an isomorphism, we recover \cite[Theorem 11]{KK7}.
	
	If $n=2$, then $X^2_e=\{(1,2)\}$. Choosing $u_0=1$, we have $V_{12}=\{2\}$ and $\sgn_1(1,2)=1$. Apart from the two above structures, we also have a $\lb$-structure  $e_1\cdot e_{12}=-e_{12}\cdot e_2=\lb(1,2)e_{12}$, $e_1\cdot e_1=-e_1\cdot e_2=e_2\cdot e_2=-\lb(1,2)e_2$. Observe that $e_1\cdot e_1=\lb(1,2)e_1-\lb(1,2)\dl$, so we can slightly modify the Poisson part of $\cdot$ (replacing $\mu(1,1)$ by $\mu(1,1)-\lb(1,2)$) to get exactly the family from \cite[Theorem 12 (iii)]{KK7} with $c=\lb(1,2)$.
\end{exm}

\begin{exm}
	Let $X=\{1,2,3\}$ with the following Hasse diagram.
	\begin{center}
		\begin{tikzpicture}
			\draw  (0,0)-- (-1,1);
			\draw  (0,0)-- (1,1);
			\draw [fill=black] (-1,1) circle (0.05);
			\draw  (-1.2,1.2) node {$2$};
			\draw [fill=black] (1,1) circle (0.05);
			\draw  (1.2,1.2) node {$3$};
			\draw [fill=black] (0,0) circle (0.05);
			\draw  (0,-0.3) node {$1$};
		\end{tikzpicture}
	\end{center}
Then $X^2_e=\{(1,2),(1,3)\}$. Choosing $u_0=1$, we have $V_{12}=\{2\}$, $V_{13}=\{3\}$ and $\sgn_1(1,2)=\sgn_1(1,3)=1$. Thus, any transposed Poisson structure on $I(X,K)$ is the sum of a structure of Poisson type $e_i\cdot e_j=\mu(i,j)\dl$, a mutational structure $e_i\cdot e_j=[[e_i,\nu],e_j]$ with $\nu=\nu(1,2)e_{12}+\nu(1,3)e_{13}$, namely
\begin{center}
	\begin{tabular}{rclrclrcl}
		$e_1\cdot e_1$ & $=$ & $-\nu(1,2)e_{12} - \nu(1,3)e_{13}$, &
		$e_1\cdot e_2$ & $=$ & $\nu(1,2)e_{12}$, &
		$e_1\cdot e_3$ & $=$ & $\nu(1,3)e_{13}$,\\ 
		$e_2\cdot e_2$ & $=$ & $-\nu(1,2)e_{12}$, &
		$e_3\cdot e_3$ & $=$ & $-\nu(1,3)e_{13}$,
	\end{tabular}
\end{center}
where $\nu(1,2),\nu(1,3)\in\{0,1\}$, and a $\lb$-structure determined by $\lb:X^2_e\to K$ as follows:
\begin{center}
	\begin{tabular}{rclrclrcl}
		$e_1\cdot e_{12}$ & $=$ & $-e_{12}\cdot e_2	=\lb(1,2)e_{12}$, &
		$e_1\cdot e_{13}$ & $=$ & $-e_{13}\cdot e_3=\lb(1,3)e_{13}$,& \\
		$e_1\cdot e_1$ & $=$ & $-\lb(1,2)e_2 - \lb(1,3)e_3$, & 
		$e_1\cdot e_2$ & $=$ & $\lb(1,2)e_2$, & 
		$e_1\cdot e_3$ & $=$ & $\lb(1,3)e_3$, \\ 
		$e_2\cdot e_2$ & $=$ & $-\lb(1,2)e_2$, & 
		$e_3\cdot e_3$ & $=$ & $-\lb(1,3)e_3$.
	\end{tabular}
\end{center}

\end{exm}

\begin{exm}
	Let $X=\{1,2,3,4\}$ with the following Hasse diagram.
	\begin{center}
		\begin{tikzpicture}
			\draw  (0,0)-- (-1,1);
			\draw  (0,0)-- (1,1);
			\draw  (-1,1)--(-2,2);
			\draw [fill=black] (-1,1) circle (0.05);
			\draw  (-1,0.7) node {$2$};
			\draw [fill=black] (1,1) circle (0.05);
			\draw  (1,0.7) node {$4$};
			\draw [fill=black] (0,0) circle (0.05);
			\draw  (0,-0.3) node {$1$};
			\draw [fill=black] (-2,2) circle (0.05);
			\draw  (-2,1.7) node {$3$};
		\end{tikzpicture}
	\end{center}
	Then $X^2_e=\{(1,4)\}$. Choosing $u_0=1$, we have $V_{14}=\{4\}$ and $\sgn_1(1,4)=1$. Thus, any transposed Poisson structure on $I(X,K)$ is the sum of a structure of Poisson type $e_i\cdot e_j=\mu(i,j)\dl$, a mutational structure $e_i\cdot e_j=[[e_i,\nu],e_j]$ with $\nu=\nu(1,3)e_{13}+\nu(1,4)e_{14}$, namely
	\begin{center}
		\begin{tabular}{rclrclrcl}
			$e_1\cdot e_1$ & $=$ & $-\nu(1,3)e_{13} - \nu(1,4)e_{14}$, & 
			$e_1\cdot e_3$ & $=$ & $\nu(1,3)e_{13}$, & 
			$e_1\cdot e_4$ & $=$ & $\nu(1,4)e_{14}$,\\ 
			$e_3\cdot e_3$ & $=$ & $-\nu(1,3)e_{13}$, & 
			$e_4\cdot e_4$ & $=$ & $-\nu(1,4)e_{14}$,
		\end{tabular}
	\end{center}
	where $\nu(1,3),\nu(1,4)\in\{0,1\}$, and a $\lb$-structure determined by $\lb:X^2_e\to K$ as follows:
	\begin{center}
		\begin{tabular}{rclrclrcl}
			$e_1\cdot e_{14}$ & $=$ & $-e_{14}\cdot e_4=\lb(1,4)e_{14}$, & &&&&& \\
			$e_1\cdot e_1$ & $=$ & $-\lb(1,4)e_4$, & 
			$e_1\cdot e_4$ & $=$ & $\lb(1,4)e_4$, &
			$e_4\cdot e_4$ & $=$ & $-\lb(1,4)e_4$.
		\end{tabular}
	\end{center}
\end{exm}

\begin{exm}
	Let $X=\{1,2,3,4\}$ with the following Hasse diagram.
	\begin{center}
		\begin{tikzpicture}
			\draw  (0,0)-- (0,1.5);
			\draw  (0,1.5)-- (1.5,0);
			\draw  (1.5,0)-- (1.5,1.5);
			\draw [fill=black] (0,0) circle (0.05);
			\draw  (0,-0.3) node {$1$};
			\draw [fill=black] (1.5,0) circle (0.05);
			\draw  (1.5,-0.3) node {$2$};
			\draw [fill=black] (0,1.5) circle (0.05);
			\draw  (0,1.8) node {$3$};
			\draw [fill=black] (1.5,1.5) circle (0.05);
			\draw  (1.5,1.8) node {$4$};
		\end{tikzpicture}
	\end{center}
	Then $X^2_e=\{(1,3),(2,3),(2,4)\}$. Choosing $u_0=1$, we have $V_{13}=\{2,3,4\}$, $V_{23}=\{2,4\}$, $V_{24}=\{4\}$ and $\sgn_1(1,3)=-\sgn_1(2,3)=\sgn_1(2,4)=1$. Thus, any transposed Poisson structure on $I(X,K)$ is the sum of a structure of Poisson type $e_i\cdot e_j=\mu(i,j)\dl$, a mutational structure $e_i\cdot e_j=[[e_i,\nu],e_j]$ with $\nu=\nu(1,3)e_{13}+\nu(2,3)e_{23}+\nu(2,4)e_{24}$, namely 
	\begin{center}
		\begin{tabular}{rclrclrcl}
			$e_1\cdot e_1$ & $=$ & $-\nu(1,3)e_{13}$, & 
			$e_1\cdot e_3$ & $=$ & $\nu(1,3)e_{13}$, & 
			$e_2\cdot e_2$ & $=$ & $-\nu(2,3)e_{23} - \nu(2,4)e_{24}$,\\ 
			$e_2\cdot e_3$ & $=$ & $\nu(2,3)e_{23}$, & 
			$e_2\cdot e_4$ & $=$ & $\nu(2,4)e_{24}$, & 
			$e_3\cdot e_3$ & $=$ & $-\nu(1,3)e_{13} - \nu(2,3)e_{23}$,\\ 
			$e_4\cdot e_4$ & $=$ & $-\nu(2,4)e_{24}$,
		\end{tabular}
	\end{center}
	where $\nu(1,3),\nu(2,3),\nu(2,4)\in\{0,1\}$, and a $\lb$-structure determined by $\lb:X^2_e\to K$ as follows:
	\begin{center}
		\begin{tabular}{rclrcl}
			$e_1\cdot e_{13}$ & $=$ & $-e_{13}\cdot e_3=\lb(1,3)e_{13}$, & 
			$e_2\cdot e_{23}$ & $=$ & $-e_{23}\cdot e_3=\lb(2,3)e_{23}$, \\ 
			$e_2\cdot e_{24}$ & $=$ & $-e_{24}\cdot e_4=\lb(2,4)e_{24}$,&
			$e_1\cdot e_1$ & $=$ & $-\lb(1,3)(e_2+e_3+e_4)$, \\ 
			$e_1\cdot e_3$ & $=$ & $\lb(1,3)(e_2+e_3+e_4)$,& 
			$e_2\cdot e_2$ & $=$ & $\lb(2,3)(e_2+e_4)-\lb(2,4)e_4$, \\  
			$e_2\cdot e_3$ & $=$ & $-\lb(2,3)(e_2+e_4)$, &
			$e_2\cdot e_4$ & $=$ & $\lb(2,4)e_4$, \\ 
			$e_3\cdot e_3$ & $=$ & $-\lb(1,3)(e_2+e_3+e_4)+\lb(2,3)(e_2+e_4)$, & 
			$e_4\cdot e_4$ & $=$ & $-\lb(2,4)e_4$. 
		\end{tabular}
	\end{center}
\end{exm}

\begin{exm}
	Let $X=\{1,2,3,4\}$ with the following Hasse diagram.
	\begin{center}
		\begin{tikzpicture}
			\draw  (0,0)-- (0,1.5);
			\draw  (0,1.5)-- (1.5,0);
			\draw  (1.5,0)-- (1.5,1.5);
			\draw  (1.5,1.5)-- (0,0);
			\draw [fill=black] (0,0) circle (0.05);
			\draw  (0,-0.3) node {$1$};
			\draw [fill=black] (1.5,0) circle (0.05);
			\draw  (1.5,-0.3) node {$2$};
			\draw [fill=black] (0,1.5) circle (0.05);
			\draw  (0,1.8) node {$3$};
			\draw [fill=black] (1.5,1.5) circle (0.05);
			\draw  (1.5,1.8) node {$4$};
		\end{tikzpicture}
	\end{center}
	Then $X^2_e=\emptyset$, because the elements of $X$ form a cycle. Thus, any transposed Poisson structure on $I(X,K)$ is the sum of a structure of Poisson type $e_i\cdot e_j=\mu(i,j)\dl$ and a mutational structure $e_i\cdot e_j=[[e_i,\nu],e_j]$ with $\nu=\nu(1,3)e_{13}+\nu(1,4)e_{14}+\nu(2,3)e_{23}+\nu(2,4)e_{24}$, namely
	\begin{center}
		\begin{tabular}{rclrclrcl}
			$e_1\cdot e_1$ & $=$ & $-\nu(1,3)e_{13}-\nu(1,4)e_{14}$, & 
			$e_1\cdot e_3$ & $=$ & $\nu(1,3)e_{13}$, & 
			$e_1\cdot e_4$ & $=$ & $\nu(1,4)e_{14}$,\\
			$e_2\cdot e_2$ & $=$ & $-\nu(2,3)e_{23} - \nu(2,4)e_{24}$, &
			$e_2\cdot e_3$ & $=$ & $\nu(2,3)e_{23}$, & 
			$e_2\cdot e_4$ & $=$ & $\nu(2,4)e_{24}$,\\ 
			$e_3\cdot e_3$ & $=$ & $-\nu(1,3)e_{13} - \nu(2,3)e_{23}$, & 
			$e_4\cdot e_4$ & $=$ & $-\nu(1,4)e_{14}-\nu(2,4)e_{24}$,
		\end{tabular}
	\end{center}
	where $\nu(1,3),\nu(1,4),\nu(2,3),\nu(2,4)\in\{0,1\}$.
\end{exm}

	
	





\begin{thebibliography}{99}
		
		
		
		
		
		\bibitem{bai20}
		Bai C., Bai R., Guo L., Wu Y.,
		Transposed Poisson algebras, Novikov-Poisson algebras, and 3-Lie algebras,  Journal of Algebra, 632 (2023), 535--566. 
		
		
		\bibitem{bfk23}   Beites P.,    Fern\'andez Ouaridi A.,   Kaygorodov I., 
		The algebraic and geometric classification of transposed Poisson algebras, Revista de la Real Academia de Ciencias Exactas, Físicas y Naturales. Serie A. Matemáticas,   117 (2023), 2,   55.
		
		\bibitem{bfk22}
		Beites P., 
		Ferreira B. L. M., 
		Kaygorodov I.,  
		Transposed Poisson structures,  arXiv:2207.00281.
		
		
		
	 \bibitem{fer23}
 Fernández Ouaridi A.,
On the simple transposed Poisson algebras and Jordan superalgebras,
arXiv:2305.13848.   	
		
		\bibitem{FKL}
		Ferreira B. L. M., Kaygorodov I., Lopatkin V., $\frac{1}{2}$-derivations of Lie       algebras and transposed Poisson algebras, 
		Revista de la Real Academia de Ciencias Exactas, Físicas y Naturales. Serie A. Matemáticas 115 (2021), 3, 142. 
		
		\bibitem{fil1} Filippov V.,
		$\delta$-Derivations of Lie algebras,
		Siberian Mathematical Journal, 39 (1998), 6, 1218--1230.
		
		\bibitem{FKS}
		Fornaroli {\'E}. Z., Khrypchenko M.,   Santulo Jr. E. A.,  Lie automorphisms of incidence algebras, Proceedings of the American Mathematical Society, 150  (2022), 4, 1477--1492.
		
		
		\bibitem{jawo}
		Jaworska-Pastuszak A., Pogorzały Z.,
		Poisson structures for canonical algebras,
		Journal of Geometry and Physics, 148 (2020), 103564.
		
		\bibitem{kk21}
		Kaygorodov I., Khrypchenko M., 
		Poisson structures on finitary incidence algebras, 
		Journal of  Algebra, 578 (2021), 402--420.
		
		
		\bibitem{kk22}       Kaygorodov I., Khrypchenko M., 
		Transposed Poisson  structures on Block  Lie algebras and superalgebras,
		Linear Algebra and Its Applications,  656 (2023), 167--197.
		
		\bibitem{kk23}       Kaygorodov I., Khrypchenko M., 
		Transposed Poisson  structures on
		Witt type algebras, 	Linear Algebra and its Applications,   665  (2023),  196--210.	
		
		
		\bibitem{kkg23}      Kaygorodov I.,  Khrypchenko M.,
		Transposed Poisson structures on generalized Witt algebras and Block Lie algebras,   Results in Mathematics, 78 (2023),  5,  186. 
		
		\bibitem{KK7} Kaygorodov I., Khrypchenko M., Transposed Poisson structures on the Lie algebra of upper triangular matrices, arXiv:2305.00727.
	
		\bibitem{klv22}    Kaygorodov I., Lopatkin V., Zhang Z., 
		Transposed Poisson structures on  Galilean and solvable Lie algebras,   Journal of Geometry and  Physics, 187 (2023),   104781.
		
		\bibitem{Rota64}
		Rota G.-C., On the foundations of combinatorial theory. I. Theory of Möbius functions, Z.
		Wahrscheinlichkeitstheorie und Verw. Gebiete, 2 (1964), 4, 340--368.


     \bibitem{kms}
Sartayev B., 
Some generalizations of the variety of transposed Poisson algebras,
arXiv:2305.12869.   


		\bibitem{SpDo}
		Spiegel E., O’Donnell C. J., Incidence Algebras. New York, NY: Marcel Dekker, 1997.
		
		
		\bibitem{ytk}
		Yang Ya.,  Tang X.,  Khudoyberdiyev A.,	
		Transposed Poisson structures on Schrodinger algebra in $(n+1)$-dimensional space-time, arXiv:2303.08180
		
		\bibitem{YYZ07}
		Yao Y., Ye Y., Zhang P., 
		Quiver Poisson algebras, 
		Journal of  Algebra, 312 (2007), 2, 570--589.
		
		\bibitem{yh21}
		Yuan L.,  Hua Q., 
		$\frac{1}{2}$-(bi)derivations and transposed Poisson algebra structures on Lie algebras,
		Linear and Multilinear Algebra, 70 (2022), 22, 7672--7701. 
		
		
		
		
		
		
		
		
	\end{thebibliography}
\end{document}